\newtheorem{Prop}{Proposition}[section]%
\newtheorem{Conj}[Prop]{Conjecture}%
\newtheorem{Question}[Prop]{Question}%
\newtheorem{Principle}[Prop]{Principle}
\newtheorem{TheoEnglish}[Prop]{Theorem}%
\newtheorem{DefEnglish}[Prop]{Definition}%[section]
\newtheorem{LemEnglish}[Prop]{Lemma}%[section]
\newtheorem{HypEnglish}[Prop]{Assumption}%
\newcommand{\A}{\mathbb A}%
\newcommand{\C}{\mathbb C}%
\newcommand{\Fp}{\mathbb F}%
\newcommand{\G}{\mathbf G}%
\newcommand{\Q}{\mathbb Q}%
\newcommand{\qp}{\mathbb Q_{p}}%
\newcommand{\R}{\mathbb R}%
\newcommand{\Z}{\mathbb Z}%
\newcommand{\zp}{\mathbb Z_{p}}%
\newcommand{\Fcal}{\mathcal F}%
\newcommand{\Ncal}{\mathcal N}%
\newcommand{\Ocal}{\mathcal O}%
\newcommand{\Kcal}{\mathcal K}%
\newcommand{\Rcal}{\mathcal R}%
\newcommand{\Scal}{\mathcal S}%
\newcommand{\Tcal}{\mathcal T}%
\newcommand{\Vcal}{\mathcal V}%
\newcommand{\Wcal}{\mathcal W}%
\newcommand{\Fcali}{\mathscr F}%
\newcommand{\Lcali}{\mathscr L}%
\newcommand{\aid}{\mathfrak a}%
\newcommand{\cid}{\mathfrak c}%
\newcommand{\pid}{\mathfrak p}%
\newcommand{\mgot}{\mathfrak m}%
\newcommand{\Lambdaf}{\mathbf{\Lambda}}
\newcommand{\GL}{\operatorname{GL}}%
\newcommand{\et}{\operatorname{et}}
\newcommand{\produit}[2]{\underset{#1}{\overset{#2}\prod}}%
\newcommand{\produittenseur}[2]{\underset{#1}{\overset{#2}\bigotimes}}%
\newcommand{\sommedirecte}[2]{\underset{#1}{\overset{#2}\bigoplus}}%
\newcommand{\applicationsimple}[3]{\begin{equation}%
\nonumber%
#1 :#2\longrightarrow #3%
\end{equation}}%
\newcommand{\application}[5]{\begin{eqnarray}%
\nonumber%
#1 :&#2&\longrightarrow #3\\
\nonumber%
&#4&\longmapsto #5
\end{eqnarray}}%
\newcommand{\suiteexacte}[5]{0\fleche#3\overset{#1}{\fleche}#4\overset{#2}{\fleche}#5\fleche0}
\newcommand{\limproj}[1]{\underset{\underset{#1}\longleftarrow}\lim}
\newcommand{\liminj}[1]{\underset{\underset{#1}\longrightarrow}\lim}
\newcommand{\coker}{\operatorname{coker}}
\newcommand{\Hom}{\operatorname{Hom}}
\newcommand{\isom}{\overset{\sim}{\longrightarrow}}
\newcommand{\plonge}{\hookrightarrow}
\newcommand{\rank}{\operatorname{rank}}%
\newcommand{\ord}{\operatorname{ord}}
\newcommand{\tenseur}{\otimes}
\newcommand{\Ltenseur}{\overset{\operatorname{L}}{\tenseur}}
\newcommand{\modulo}{\operatorname{ mod }}
\newcommand{\Spec}{\operatorname{Spec}}
\newcommand{\Id}{\operatorname{Id}}%
\newcommand{\Aut}{\operatorname{Aut}}%
\newcommand{\Frac}{\operatorname{Frac}}%
\newcommand{\Tate}{\operatorname{Ta}}%
\newcommand{\Cone}{\operatorname{Cone}}%
\newcommand{\fleche}{\longrightarrow}%
\newcommand{\croix}{^{\times}}%
\newcommand{\rhobar}{\bar{\rho}}%
\newcommand{\Hun}{H^{1}}
\newcommand{\Htilde}{\tilde{H}}
\newcommand{\RGamma}{\operatorname{R}\Gamma}%
\newcommand{\Det}{\operatorname{{D}et}}%
\newcommand{\RHom}{\operatorname{R}\Hom}%
\newcommand{\Fr}{\operatorname{Fr}}%
\newcommand{\Gal}{\operatorname{Gal}}
\newcommand{\Qbar}{\bar{\Q}}%
\newcommand{\Fbar}{\bar{F}}%
\newcommand{\Kbar}{\bar{K}}%
\newcommand{\Fpbar}{\bar{\mathbb F}}%
\newcommand{\Dbar}{\bar{D}}
\newcommand{\kbar}{\bar{k}}%
\newcommand{\s}{\sigma}%
\newcommand{\hgot}{\mathfrak h}%
\newcommand{\Hecke}{\mathbf{T}}%
\newcommand{\Eul}{\operatorname{Eul}}
\newcommand{\Nekovar}{Nekov\'a\v{r}}%
\newcommand{\cl}{\operatorname{cl}}
\numberwithin{equation}{subsection}%
\date{}
\begin{document}%
\title{Congruences of algebraic $L$-functions of motives}
\author{Olivier Fouquet and Jyoti Prakash Saha}%
\maketitle
\begin{abstract}
We develop a framework to investigate conjectures on congruences between the algebraic part of special values of $L$-functions of congruent motives. We show that algebraic local Euler factors satisfy precise interpolation properties in $p$-adic families of motives and that algebraic $p$-adic $L$-functions exist in quite large generality for $p$-adic families of automorphic motives. We formulate two conjectures refining (and correcting) the currently existing formulation of the Equivariant Tamagawa Number Conjecture with coefficients in Hecke algberas and pointing out the links between conjecture on special values and completed cohomology.
\end{abstract}
%
%\selectlanguage{francais}%
\selectlanguage{english}%

\newcommand{\hord}{\mathfrak h^{\ord}}%
\newcommand{\hdual}{\mathfrak h^{dual}}%
\newcommand{\matricetype}{\begin{pmatrix}\ a&b\\ c&d\end{pmatrix}}%
\newcommand{\Iw}{\operatorname{Iw}}%
\newcommand{\Hi}{\operatorname{Hi}}
\newcommand{\cyc}{\operatorname{cyc}}
\newcommand{\ab}{\operatorname{ab}}
\newcommand{\can}{\operatorname{can}}%
\newcommand{\Fitt}{\operatorname{Fitt}}%
\newcommand{\Tiwa}{\mathcal T_{\operatorname{Iw}}}%
%SectionTour
\newcommand{\Af}{\operatorname{A}}%
\newcommand{\Dunzero}{D_{1,0}}%
\newcommand{\Uun}{U_{1}}%
\newcommand{\Uzero}{U_{0}}%
\newcommand{\Uundual}{U^{1}}%
\newcommand{\Uunun}{U^{1}_{1}}%
\newcommand{\Wdual}{\Wcal^{dual}}
%SectionCM
\newcommand{\JunNps}{J_{1,0}(\Ncal, P^{s})}%
\newcommand{\Tatepord}{\Tate_{\pid}^{ord}}%
\newcommand{\Kum}{\operatorname{Kum}}%
\newcommand{\zcid}{z(\cid)}%
\newcommand{\kgtilde}{\tilde{\kappa}}%
\newcommand{\kiwa}{\varkappa}%
\newcommand{\kiwatilde}{\tilde{\varkappa}}%
\newcommand{\Hbar}{\bar{H}}%
\newcommand{\Tred}{T/\mgot T}%
\newcommand{\Riwa}{R_{\operatorname{Iw}}}%
\newcommand{\Kiwa}{\Kcal_{\operatorname{Iw}}}%
\newcommand{\Sp}{\mathbf{Sp}}%
\newcommand{\Aiwa}{\mathcal A_{\operatorname{Iw}}}%
\newcommand{\Viwa}{\mathcal V_{\operatorname{Iw}}}%
\newcommand{\pseudiso}{\overset{\centerdot}{\isom}}%
\newcommand{\pseudisom}{\overset{\approx}{\fleche}}%
\newcommand{\carac}{\operatorname{char}}%
\newcommand{\length}{\operatorname{length}}
\newcommand{\eord}{e^{\ord}}%
\newcommand{\eordm}{e^{\ord}_{\mgot}}%
\newcommand{\hordinfini}{\hord_{\infty}}%
\newcommand{\Mordinfini}{M^{\ord}_{\infty}}%
\newcommand{\hordm}{\hord_{\mgot}}%
\newcommand{\hminm}{\hgot^{min}_{\mgot}}%
\newcommand{\Mordm}{M^{\ord}_{\mgot}}%
\newcommand{\Mtwist}{M^{\tw}_{\mgot}}
\newcommand{\Xun}{X_{1}}%
\newcommand{\Xundual}{X^{1}}%
\newcommand{\Xunun}{X^{1}_{1}}%
\newcommand{\Xtw}{X^{\tw}}
\newcommand{\Inert}{\mathfrak{In}}%
\newcommand{\Tsp}{T_{\Sp}}%
\newcommand{\Asp}{A_{\Sp}}%
\newcommand{\Vsp}{V_{\Sp}}%
\newcommand{\SK}{\mathscr{S}}%
\newcommand{\Rord}{R^{\ord}}%
\newcommand{\per}{\operatorname{per}}
\newcommand{\z}{\mathbf{z}}
\newcommand{\zs}{\tilde{\mathbf{z}}}
\newcommand{\Ebarbar}{\bar{\bar{E}}}
\newcommand{\Grsym}{\mathfrak S}
\newcommand{\epsi}{\varepsilon}
\newcommand{\Fun}[2]{F^{{\mathbf{#1}}}_{#2}}
\newcommand{\tw}{\operatorname{tw}}
\newcommand{\ctf}{\operatorname{ctf}}
\newtheorem*{TheoA}{Theorem A}%
\newtheorem*{TheoB}{Theorem B}%
\newcommand{\isocan}{\overset{\can}{\simeq}}
\newcommand{\gen}{\operatorname{gen}}
\bigskip{\footnotesize%
  \textsc{Départment de Mathématiques, Bâtiment 425, Faculté des sciences d'Orsay Université Paris-Sud} \par  
  \textit{E-mail address}: \texttt{olivier.fouquet@math.u-psud.fr} \par
  \textit{Telephone number}: \texttt{+33169155729} \par
  \textit{Fax number}: \texttt{+33169156019}
  }
  
  \bigskip{\footnotesize%
  \textsc{Départment de Mathématiques, Bâtiment 425, Faculté des sciences d'Orsay Université Paris-Sud} \par  
  \textit{E-mail address}: \texttt{jyoti-prakash@math.u-psud.fr} \par
  \textit{Telephone number}: \texttt{+33169157237} \par
  \textit{Fax number}: \texttt{+33169156019}
  }
 \tableofcontents

\section{Introduction}
\subsection{Motivation}
In \cite{MazurValues}, the following remarkable question is asked about the special values of $L$-functions of rational eigenforms, a positive answer to which entails that congruent eigenforms have congruent special values.
\begin{Question}\label{Q1}
Can one expect [\dots] that the residue class, modulo an \emph{arbitrary} ideal $\aid$ in the Hecke algebra, of the "algebraic part" of the special value of the $L$-function attached to $\Gamma_{0}(N)$ (and its twists by Dirichlet characters $\chi$), is given by a "formula" determined by the representation of $\Gal(\Qbar/\Q)$ on the kernel of $\aid$ in the jacobian of $X_{0}(N)$?
\end{Question}
Note that B.Mazur had to resort twice to scare quotes in his formulation of the question as, at the time of his writing, the study of Galois representations with coefficients in Hecke algebras was barely in infancy and a conjectural theory of special values of $L$-functions was still lacking even for modular forms. After the fantastic progresses in the study of  $p$-adic families of automorphic Galois representations parametrized by Hecke algebras contained in and initiated by \cite{HidaInventionesOrdinary,WilesOrdinaryLambdaAdic,MazurDeformation,AshStevens,ColemanMazur,EmertonInterpolationEigenvalues,BellaicheChenevier,UrbanEigenvarieties} (among many other works) and in the light of the formulation of general conjectures for special values of $L$-functions of general motives in \cite{BlochKato,FontainePerrinRiou}, question \ref{Q1} can be precisely stated and investigated, even for more general motives.
\begin{Question}\label{Q2}
Suppose that $M_{1}$ and $M_{2}$ are two rational motives congruent modulo $p$ (in the sense that the mod $p$ residual representation attached to their $p$-adic étale realizations are isomorphic). Is it then true that the algebraic part of the special values of their $L$-functions (with Euler factors at $p$ removed) are congruent? More generally and more precisely, does the algebraic part of the special values of the $L$-functions in $p$-adic analytic families of motives vary analytically?
\end{Question}
Another bolder but natural question in this context is the following.
\begin{Question}\label{Q3}
Suppose that $M_{1}$ and $M_{2}$ are two rational motives congruent modulo $p$ and suppose that the $p$-part of the conjectures on special values of $L$-functions is true for $M_{1}$, is it then the case that $p$-part of the conjectures on special values of $L$-functions is true for $M_{2}$?
\end{Question}
If Mazur's question in its general form as in question \ref{Q2} admits a positive answer, then the truth of the $p$-part of the conjectures on special values of $L$-functions for $M_{1}$ implies that it is true modulo $p$ for $M_{2}$. What question \ref{Q3} raises is the question of whether this hypothesis is enough to ensure that the genuine full conjecture is true. Of course, if all the conjectures on special values of $L$-functions of motives are true, question \ref{Q3} tautologically admits a positive answer so an unconditional positive answer to this question can be seen as supporting evidence for the general conjectures. On the other hand, it should be noted that even if all conjectures on special values of individual motives were known, the answer to question \ref{Q2} would still not obviously be positive; and this is just as well since we show in sub-sections \ref{SubObstr} and \ref{SubExample} below that, as formulated, both questions \ref{Q1} and \ref{Q2} admit negative answers.

\subsection{Statement of results}
The aim of this manuscript is to propose a generalization and refinement of the conjectures of \cite{KatoHodgeIwasawa,KatoViaBdR} on special values of motives with coefficients encompassing the case of $p$-adic families of Galois representations parametrized by Hecke algebras and allowing for an investigation of questions \ref{Q2} and \ref{Q3}.

After reviewing a well-known obstruction relative to places of ramification for a too strong version of \ref{Q2} to always admit a positive answer, we show following \cite{TheseSaha} that this obstruction is the only one and that, in its absence, the far stronger statement that $p$-adic families of automorphic motives parametrized by Hecke algebras admit $p$-adic $L$-functions interpolating the algebraic part of the special values of $L$-functions at points for which the associated Galois representation is pure in the sense of the Weight-Monodromy Conjecture (so conjecturally at all classical points) is true. This result applies in particular unconditionally to $p$-adic families of eigenforms for $\GL_{2}$ over totally real fields or definite unitary groups over CM fields and it is in this form that we quote it in the introduction (see theorem \ref{TheoAuto} for a more general statement). 
\begin{TheoEnglish}\label{TheoIntro}
Let $\G$ be a definite unitary groups over an imaginary quadratic extension $F$ of $\Q$ or $\GL_{2}$ over a totally real field also denoted by $F$. Let $R_{\Sigma}$ be a local factor of the nearly-ordinary reduced Hecke algebra generated by operators outside $\Sigma$ attached to a residually absolutely irreducible representation. Let $\aid$ be a minimal prime of $R_{\Sigma}$. Then there exists a $p$-adic family $\Fcali$ of $G_{F,\Sigma}$-representations with coefficients in $R_{\Sigma}/\aid$ and an algebraic determinant $\Lcali_{\Sigma}(\Fcali)$ satisfying the interpolation property
\begin{equation}\nonumber
\Lcali_{\{p\}}(\Fcali)\tenseur_{R_{\Sigma}/\aid,\psi}\Ocal\isocan\Lcali_{\{p\}}(\Fcali\tenseur_{R_{\Sigma}/\aid,\psi}\Ocal)
\end{equation}
for all $\psi$ attached to classical automorphic representations of $\G(\A_{\Q}^{(\infty)})$.
\end{TheoEnglish}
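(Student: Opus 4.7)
The plan is to combine three ingredients: the construction of a big Galois representation interpolating the classical automorphic representations parametrized by $R_\Sigma/\aid$, the algebraic interpolation of local Euler factors established earlier in the manuscript, and a verification of the compatibility of these two constructions at the classical specializations. Because $R_\Sigma/\aid$ is a local domain and the residual representation is absolutely irreducible, standard pseudo-representation arguments (in the style of Nyssen--Rouquier, extending Hida's nearly ordinary constructions for $\GL_2/F$ totally real and the Chenevier--Harris type constructions for definite unitary groups over CM fields) produce a free $R_\Sigma/\aid$-module $\Fcali$ of the appropriate rank equipped with a continuous $G_{F,\Sigma}$-action whose specialization at each arithmetic prime $\psi$ recovers the $p$-adic \'etale realization of the automorphic motive of $\psi$. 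For $\GL_2/F$ this is Hida--Wiles; for definite unitary groups the classicality at arithmetic points follows from the control theorems and the absolute irreducibility assumption guarantees that the representation can be defined over $R_\Sigma/\aid$ itself rather than over its total fraction ring.

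Once $\Fcali$ is in place, I would define $\Lcali_{\{p\}}(\Fcali)$ as the product, taken over all finite places $v\neq p$, of the local algebraic Euler factors attached to the restriction $\Fcali|_{G_{F_v}}$ by the interpolation machinery already developed in the paper. At unramified places this is simply $\det(1-\Fr_v T\,|\,\Fcali)$ evaluated at the appropriate Frobenius argument, which manifestly commutes with base change $\tenseur_{R_\Sigma/\aid,\psi}\Ocal$. At ramified places $v\in\Sigma\setminus\{p\}$, the algebraic local Euler factor is defined via the inertia invariants of a Weil--Deligne module reconstructed from $\Fcali|_{G_{F_v}}$, and its compatibility with arithmetic specialization is the content of the previously proved local interpolation statement. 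Multiplying all these local factors produces an element $\Lcali_{\{p\}}(\Fcali)$ in an appropriate total ring, canonically base-changing to $\Lcali_{\{p\}}(\Fcali\tenseur_{R_\Sigma/\aid,\psi}\Ocal)$.

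The last step is to check the canonical isomorphism at all classical $\psi$. For unramified places this is formal. For $v\in\Sigma\setminus\{p\}$ one must know that the specialization of the algebraic Euler factor constructed in family agrees with the classical algebraic Euler factor of the automorphic motive at $\psi$; this is exactly where purity enters, as the Weight--Monodromy property at the pure point guarantees that the dimension of inertia invariants does not jump along the specialization and therefore that the local factor specializes naively. Since conjecturally all classical $\psi$ correspond to pure representations (and this is known in our cases by the work of Taylor--Yoshida, Caraiani, Scholze and others on local-global compatibility), the interpolation formula holds at every classical $\psi$ in the sense stated.

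The principal obstacle, in my view, is not the formal manipulation of local factors in families but rather the genuine local-global compatibility at primes of ramification, including the preservation of monodromy filtrations under interpolation. Indeed, the fact that $R_\Sigma/\aid$ may fail to be regular means that one cannot argue by generic flatness alone; one has to control inertial behaviour pointwise. This is where the input from \cite{TheseSaha} is essential, and where the restriction to definite unitary groups and $\GL_2/F$ totally real enters (as opposed to more general reductive groups) via the availability of sufficiently strong local-global compatibility results. Once this is granted, the assembly of $\Lcali_{\{p\}}(\Fcali)$ and the verification of the interpolation property proceed in a purely formal manner.
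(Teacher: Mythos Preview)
Your overall strategy matches the paper's: construct the Galois representation over $R_\Sigma/\aid$ from the absolutely irreducible residual assumption, and then deduce the interpolation property from the local result on the constancy of monodromy rank at pure specializations (theorem \ref{PropLocal}, feeding into proposition \ref{PropSpec}). The paper likewise reduces everything to those two inputs and then defers the verification of the hypotheses of theorem \ref{TheoAuto} in the specific automorphic settings to \cite{TheseSaha}.

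There is, however, a genuine misidentification of the object $\Lcali_{\{p\}}(\Fcali)$ in your second paragraph. You describe it as ``the product, taken over all finite places $v\neq p$, of the local algebraic Euler factors'' and then as ``an element in an appropriate total ring''. This is not what $\Lcali_{\{p\}}(\Fcali)$ is. In the paper it is a graded invertible $R_\Sigma/\aid$-module, namely
\[
\Lcali_{\{p\},\Sigma}(\Fcali)=\Det^{-1}_{R_\Sigma/\aid}\RGamma_{c}(\Ocal_F[1/\Sigma],\Tcal)\tenseur\produittenseur{v\in\Sigma\backslash\{p\}}{}\Lcali^{-1}_{v}(\Fcali),
\]
so the contribution of the infinitely many unramified places is packaged inside the determinant of the compactly supported \'etale cohomology complex, not as an infinite product of local factors (which, as the paper itself remarks just before definition \ref{DefLalg}, ``could not possibly literally work''). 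The interpolation isomorphism is then an isomorphism of invertible modules, not an equality of elements; no trivialization is chosen at this stage. Once you make this correction, the argument becomes exactly the paper's: the $\RGamma_c$ term commutes with $-\tenseur_{R_\Sigma/\aid,\psi}\Ocal$ by the standard base-change for compactly supported cohomology, and only the finitely many $\Lcali_{v}(\Fcali)$ with $v\in\Sigma\backslash\{p\}$ require the purity input of theorem \ref{PropLocal}. Your discussion of why purity and local-global compatibility are the substantive inputs is then entirely on point and agrees with the paper.
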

We further show that a naïve formulation of the expected compatibilities of the statement of conjectures on special values of $L$-function with $p$-adic variation is incorrect.
\begin{TheoEnglish}\label{TheoFalse}
The generalized Iwasawa main conjecture of \cite[Section 3.2]{KatoViaBdR} is false as stated already for Hida families of eigencuspforms.
\end{TheoEnglish}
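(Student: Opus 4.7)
The plan is to establish this negative result by exhibiting an explicit counterexample, using two ingredients: the precise interpolation properties of algebraic $p$-adic $L$-functions furnished by Theorem \ref{TheoIntro}, and the obstruction at places of ramification reviewed in subsection \ref{SubObstr}.

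First, I would recall the precise form of the generalized Iwasawa main conjecture of \cite[Section 3.2]{KatoViaBdR} in the setting of a $p$-adic family $\Fcali$ with coefficients in a Hecke algebra $R$. The conjecture asserts an equality, up to units of $R$, between the algebraic $p$-adic $L$-function, the determinant of a Selmer-type cohomological complex constructed in the family, and an explicit product of local terms including Euler factors at a fixed finite set $\Sigma$ of primes, with the crucial requirement that this equality be compatible with specialization at every classical arithmetic point $\psi$.

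Second, the counterexample is built from a Hida family $\Fcali$ of tame level divisible by some prime $\ell \neq p$, chosen so that $\Fcali$ contains two distinct classical specializations $\psi,\psi'$ whose associated automorphic representations have local components of different type at $\ell$. A concrete situation is that of an oldform coming from a level prime to $\ell$ (with unramified or principal series local component) versus a genuinely $\ell$-new form (with special Steinberg local component): the dimensions of the inertia invariants of the associated $\ell$-adic Galois representations are then different, so the Euler factors $\Eul_\ell(V_\psi)$ and $\Eul_\ell(V_{\psi'})$ differ in degree even though $\ell\in\Sigma$ throughout the family.

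Third, I would contrast the two sides of Kato's conjectured equality at $\psi$ and at $\psi'$. The interpolation property of Theorem \ref{TheoIntro} pins down the specialization of the algebraic $p$-adic $L$-function at each classical point as the $\Sigma$-removed classical $L$-value, whereas the local term predicted by Kato's formula is constructed once and for all from the generic member of the family and cannot simultaneously match the Euler factor at $\ell$ at both specializations. The difference is a non-unit in the residue fields at $\psi$ and $\psi'$, contradicting the form of the conjecture. The main obstacle is to exhibit concretely a Hida family and a pair of classical arithmetic points enjoying these properties while also lying on the same irreducible component of $\Spec R_\Sigma$ — this is the content of subsection \ref{SubExample}; once such a family is in hand, the contradiction is forced by unwinding the definitions and invoking Theorem \ref{TheoIntro}.
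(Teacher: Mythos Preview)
Your plan has a genuine gap at the crucial step. You want two \emph{classical} specializations $\psi,\psi'$ on the \emph{same} irreducible component of $\Spec R_\Sigma$ whose local types at $\ell$ differ. But this is exactly what Theorem~\ref{PropLocal} forbids: at pure (hence classical) specializations, the rank of the monodromy operator, and therefore the degree and shape of the Euler factor at $\ell$, is constant along an irreducible component. The example of subsection~\ref{SubExample} does \emph{not} furnish what you claim: the forms $f_1$ and $f_2$ there lie on \emph{different} irreducible components of $\Spec R_\Sigma$, and the paper says so explicitly (``no $p$-adic family of motives parametrized by a domain can contain both $M(f_1)$ and $M(f_2)$''). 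So the ``main obstacle'' you identify is not an obstacle but an impossibility, and the argument as you have sketched it cannot be completed.

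The paper's actual route is different and subtler. It works on a single irreducible component $R(\aid)$ and exploits a \emph{non-classical} specialization $\psi$ at which the representation $T_\psi$ becomes unramified at some $\ell\in\Sigma$ where the generic member of the family is ramified. Kato's conjecture in \cite[Section~3.2]{KatoViaBdR} demands compatibility at \emph{all} specializations (property~\ref{ItProb} in subsection~\ref{SubDisc}), not only classical ones. At such a non-classical $\psi$, the Galois-theoretic Euler factor $\Eul_\ell(\Fcali_\psi,1)$ arising from the trivialization of the cohomology complex need not agree with the specialization $\psi(\Eul_\ell(\Fcali,1))$ of the generic Euler factor that governs the $p$-adic $L$-function $L(\aid)$ of \cite{EmertonPollackWeston}; Theorem~\ref{PropLocal} gives no control here precisely because $\psi$ is not pure. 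Proposition~\ref{PropL} pins down the unique candidate basis satisfying the interpolation property at classical points, and the discrepancy at the non-classical $\psi$ then contradicts property~\ref{ItProb}. You should rework your argument along these lines: the contradiction is between classical interpolation and compatibility at a single non-classical point, not between two classical points.
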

Our works suggests that  the correct formulation is closely linked with generalizations of the Local Langlands Correspondence to $p$-adic families parametrized by Hecke algebra and crucially involves the completed cohomology of \cite{EmertonInterpolationEigenvalues}. In that direction, we propose the conjectures \ref{ConjMain} and \ref{ConjCompleted} as corrected versions of \cite[Conjecture 3.2.2]{KatoViaBdR}. Though these conjectures are rather optimistic, we review in theorem \ref{TheoHecke} results of \cite{HeckeETNC} showing that they are true for rational eigencuspforms admitting Taylor-Wiles systems. As we show that the truth of conjecture \ref{ConjMain} implies a positive answer to question \ref{Q3}, this also settles question \ref{Q3} in the affirmative in that case. 

Going back one last time to Mazur's original question, we thus establish that though it admits a negative answer understood literally, a much stronger and much more beautiful statement-possibly the statement B.Mazur was hinting at behind the quotes-actually holds (at least under mild hypotheses on the residual representation). 

\paragraph{Acknowledgments:} The first name author thanks Barry Mazur and David Geraghty for encouraging him (in two very different ways) to think about the relation between completed cohomology and the trivializations of cohomology complexes as well as the organizers of the workshop \textit{$p$-adic aspects of modular forms} at IISER, Pune for the invitation to participate in this program. The second named author is grateful to the first named author for providing advice and encouragements while preparing \cite{TheseSaha}. He also thanks Yiwen Ding and Santosh Nadimpalli for useful discussions.

\subsection{General notations}
All rings are assumed to be commutative (and unital). If $R$ is a ring, we denote by $H^{q}_{\et}(R,-)$ the étale cohomology group $H^{q}(\Spec R,-)$. If $F$ is a field, we denote by $G_{F}$ the Galois group of a separable closure of $F$. If $F$ is a global field and $\Sigma$ is a finite set of places of $F$, we denote by $G_{F,\Sigma}$ the Galois group $\Gal(F_{\Sigma}/F)$ where $F_{\Sigma}$ is the maximal Galois extension of $F$ unramified outside $\Sigma$ and $\{v|\infty\}$. If $F$ is a number field and $v$ is a finite place of $F$, then $\Ocal_{F,v}$ is the unit ball of $F_{v}$, $\varpi_{v}$ is a choice of uniformizing parameter and $k_{v}$ is the residual field of $\Ocal_{F,v}$. The reciprocity law of local class field theory is normalized so that $\varpi_{v}$ is sent to (a choice of lit of) the geometric Frobenius morphism $\Fr(v)$. For all rational primes $\ell$, we fix an algebraic closure $\Qbar_{\ell}$ of $\Q_{\ell}$, an embedding of $\Qbar$ into $\Qbar_{\ell}$ and an identification $\iota_{\infty,\ell}:\C\simeq\Qbar_{\ell}$ extending $\Qbar\plonge\Qbar_{\ell}$. The non-trivial element of $\Gal(\C/\R)$ is denoted by $\tau$ and if $R$ is a ring in which $2$ is a unit and $M$ is an $R[\Gal(\C/\R)]$-module, then $M^{\pm}$ or $M^{\tau=\pm1}$ denotes as usual the eigenspace on which $\tau$ acts as $\pm 1$. We fix once and for all an odd prime $p$.

For $G$ a group, a $G$-representation $(T,\rho,R)$ is an $R$-module $T$ free of finite rank together with a continuous action
\begin{equation}\nonumber
\rho:G\fleche\Aut_{R}(T).
\end{equation}
Let $F$ be a number field and $R$ a complete local noetherian ring of residual characteristic $p$. If $\Sigma$ is a finite set of places of $F$ containing $\{v|p\}$ and if $(T,\rho,R)$ is a $G_{F,\Sigma}$-representation, then $T$ can also be considered as an étale sheaf over $\Spec\Ocal_{F}[1/\Sigma]$ and the continuous group cohomology $H^{i}(G_{F,\Sigma},T)$ coincides with the étale cohomology $H^{i}_{\et}(\Ocal_{F}[1/\Sigma],T)$.

We refer to the appendix for notations and conventions regarding the determinant functor and complexes of cohomology with local conditions.

A notational difficulty of this manuscript is that two of its main objects of study are the functors from the category of perfect complexes of $R$-modules to the category of graded invertible $R$-modules of \cite{MumfordKnudsen} and the polynomial laws on group-algebras of \cite{ChenevierDeterminant}. These two families of objects are both called determinants. In order to help distinguish them, the determinant functor is always written with a capitalized initial.

\section{Conjecture on special values and $p$-adic families of motives}
\subsection{Review of the Tamagawa Number Conjecture}
In sub-section \ref{SubTNC}, we review the formalism of the Tamagawa Number Conjecture for pure motives over $\Q$ following \cite{KatoHodgeIwasawa,FontainePerrinRiou}. Inspired by Grothendieck's trace formula and its use in the expression of the $L$-function of a scheme over a field of positive characteristic (as recalled in sub-section \ref{SubFinite}), we then recall in sub-section \ref{SubComp} the refinement of \cite{KatoHodgeIwasawa} describing the behavior of the Tamagawa Number Conjecture under proper base change and outline the conjectural description given in \cite{KatoViaBdR} of the variation of special values of $p$-adic étale sheaves on $\Spec\Z[1/p]$ with change of ring of coefficients.
\subsubsection{The case of rational motives}\label{SubTNC}
 Let $M$ be a rank $n$ rational motive pure of weight $w-2i$, which for concreteness we can take to be a direct summand of the shifted absolute cohomology $h^{w}(X/\Q)(i)$ of a proper smooth scheme $X$ over $\Q$. Let $S$ be a finite set of primes containing the fixed odd prime $p$. The $p$-adic $G_{\Q}$-representation $(H_{\et}^{w}(X\times_{\Q}\Qbar,\qp)(i),\rho,\qp)$ arising from the $p$-adic étale realization of $M$ is denoted by $M_{\et,p}$. We also view $M_{\et,p}$ as a smooth étale sheaf over $\Spec\Z[1/p]$. In this manuscript, we always consider as an integral part of the definition of a pure motive the fact that $M_{\et,p}$ satisfies the Weight-Monodromy Conjecture of \cite[Conjecture 3.9]{IllusieMonodromie}.
 \begin{HypEnglish}\label{HypWMC}
Let $\ell\neq p$ be a finite prime and let $\s_{\ell}\in G_{\Q_{\ell}}$ be a lift of $\Fr(\ell)$. The eigenvalues of $\s_{\ell}$ acting on the the $j$-th graded part of the monodromy filtration of the $p$-adic $G_{\Q_{\ell}}$-representation $M_{\et,p}$ are Weil numbers of weight $w-2i+j$.
 \end{HypEnglish}
 
For all $\ell\notin S$, the Euler factor
\begin{equation}\nonumber
\Eul_{\ell}(M,X)=\det(1-\Fr(\ell)X|M_{\et,p}^{I_{\ell}})
\end{equation}
is conjectured to lie in $\Q[X]$ (and to be independent of the choice of $p$, as the notation suggests).  Under this conjecture, the partial $L$-function $L_{S}(M,s)$ is the complex $L$-function defined by the partial Euler product 
\begin{equation}\nonumber
L_{S}(M,s)=\produit{\ell\notin S}{}\frac{1}{\Eul_{\ell}(M,p^{-s})}=\produit{\ell\notin S}{}\frac{1}{\det(1-\Fr(\ell)p^{-s}|M_{\et,p}^{I_{\ell}})}.
\end{equation}
for $s$ sufficiently large. It is conjectured to admit a meromorphic continuation to $\C$ satisfying a function equation relating $L(M,s)$ and $L(M^{*}(1),-s)$. Denote by $L^{*}_{S}(M^{*}(1),0)$ the first non-zero term in the Taylor expansion of $L_{S}(M,s)$ about zero. The analytic special value of the $L$-function of $M^{*}(1)$ at zero is the complex number $L^{*}_{S}(M^{*}(1),0)$. 

The complex $\RGamma_{\et}(\Z[1/S],M_{\et,p})$ is a bounded (hence perfect) complex of $\qp$-vector spaces and, more generally, for all $G_{\Q}$-stable $\zp$-lattice $T$ inside $M_{\et,p}$, the complex $\RGamma_{\et}(\Z[1/S],T)$ is a perfect complex of $\zp$-modules. Seen as complexes placed in degree 0, $M_{\et,p}^{+}$ and $T^{+}$ are also perfect complexes (of $\qp$-vector spaces and $\zp$-modules respectively). Hence, there exists free modules of rank 1
\begin{equation}\nonumber
\Det^{-1}_{\zp}\RGamma_{\et}(\Z[1/S],T)\tenseur_{\zp}\Det^{-1}_{\zp}(T^{+})
\subset\Det^{-1}_{\qp}\RGamma_{\et}(\Z[1/S],M_{\et,p})\tenseur_{\qp}\Det^{-1}_{\qp}(M_{\et,p}^{+})
\end{equation} 
over $\qp$ and $\zp$ respectively. The $\zp$-lattice defined in the right-hand side by the image of the left-hand side under tensor product with $\qp$ is independent of the choice of $T$ by Tate's formula \cite[Theorem 2.2]{TateBSD}.

The following conjecture, called the $p$-part of the Tamagawa Number Conjecture (henceforth TNC), is due to Bloch-Kato and is given here in the formulation of \cite{FontaineValeursSpeciales,KatoHodgeIwasawa,FontainePerrinRiou}.
\begin{Conj}[Tamagawa Number Conjecture]\label{ConjTNC}
For all finite set of primes $S\supset\{p\}$, there exists a fundamental line $\Delta_{S}(M/\Q)$ which is a $\Q$-vector space of dimension 1 equipped with a canonical isomorphism 
\begin{equation}\nonumber
\per_{\C}:\Delta_{S}(M/\Q)\tenseur_{\Q}\C\isocan\C
\end{equation}
of complex periods and with a canonical isomorphism 
\begin{equation}\nonumber
\per_{p}:\Delta_{S}(M/\Q)\tenseur_{\Q}\qp\isocan\Det^{-1}_{\qp}\RGamma_{\et}(\Z[1/S],M_{\et,p})\tenseur_{\qp}\Det^{-1}_{\qp}(M_{\et,p}^{+})
\end{equation}
of $p$-adic periods as well as a motivic element $\z_{S}(M/\Q)$ which is a basis of $\Delta_{S}(M/\Q)$ verifying the following properties.
\begin{enumerate}
\item The image of $\z_{S}(M/\Q)\tenseur1$ through $\per_{\C}$ is equal to $L^{*}_{S}(M^{*}(1),0)$.
\item The image of the $\zp$-lattice $\zp\cdot(\z_{S}(M/\Q)\tenseur1)$ through $\per_{p}$ is equal to 
\begin{equation}\nonumber
\Det^{-1}_{\zp}\RGamma_{\et}(\Z[1/S],T)\tenseur_{\zp}\Det^{-1}_{\zp}(T^{+})
\end{equation}
for any choice of $G_{\Q}$-stable $\zp$-lattice $T$ inside $M_{\et,p}$.
\end{enumerate}
\end{Conj}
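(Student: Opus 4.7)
The statement is the Fontaine--Perrin-Riou formulation of the Tamagawa Number Conjecture, and the ``proof'' I would sketch decomposes into a construction part (production of $\Delta_{S}(M/\Q)$, the two period isomorphisms, and the motivic element $\z_{S}(M/\Q)$) and a verification part (the two compatibility properties). Property (1) will be made tautological by a suitable normalisation of $\z_{S}(M/\Q)$, so the substantive content is concentrated in property (2).

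Following Fontaine--Perrin-Riou and Kato, I would set
\[
\Delta_{S}(M/\Q)=\Det^{-1}_{\Q}\RGamma_{mot}(\Z[1/S],M)\tenseur_{\Q}\Det^{-1}_{\Q}(M_{B}^{+})\tenseur_{\Q}\Det_{\Q}(M_{\dR}/\Fil^{0}M_{\dR}),
\]
where $\RGamma_{mot}(\Z[1/S],M)$ is the motivic cohomology complex built from Bloch's higher Chow groups (or equivalently from Beilinson's absolute cohomology) and $M_{B}$, $M_{\dR}$ denote the Betti and de Rham realisations of $M$. That this tensor product is a $\Q$-line rests on the Beilinson--Soul{\'e} vanishing conjecture and on finite-dimensionality of motivic cohomology, which I would assume. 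The complex period $\per_{\C}$ is then obtained by combining the Beilinson regulator from motivic to Deligne cohomology with the Betti--de Rham comparison $M_{B}\tenseur_{\Q}\C\isocan M_{\dR}\tenseur_{\Q}\C$. The $p$-adic period $\per_{p}$ is obtained from three ingredients: (i) the de Rham comparison $M_{\et,p}\tenseur_{\qp}\BdR\simeq M_{\dR}\tenseur_{\Q}\BdR$ which realises $M_{\et,p}$ as a de Rham $G_{\Q}$-representation; (ii) the Bloch--Kato exponential embedding motivic cohomology inside $\Hun(\Q,M_{\et,p})$; and (iii) Artin--Verdier global duality together with the Poitou--Tate exact sequence, which relate $\RGamma_{\et}(\Z[1/S],M_{\et,p})$ to the Bloch--Kato Selmer complex. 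The auxiliary factor $\Det^{-1}_{\qp}(M_{\et,p}^{+})$ on the target of $\per_{p}$ matches the archimedean contribution via the comparison $M_{B}\tenseur_{\Q}\qp\simeq M_{\et,p}$.

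With these structures in place, the motivic element $\z_{S}(M/\Q)$ is defined as the generator of $\Delta_{S}(M/\Q)$ whose image under $\per_{\C}$ equals $L^{*}_{S}(M^{*}(1),0)$; such a generator exists once Beilinson's conjecture relating regulators to leading terms of $L$-functions is available, and then property (1) holds by construction. Independence of the $\zp$-lattice $T$ in the target of (2) is a consequence of Tate's formula (cited in the excerpt as \cite{TateBSD}), so it is enough to check (2) for a single convenient choice of $T$.

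The main obstacle is property (2) itself, which is the genuine content of the Tamagawa Number Conjecture: one must show that the transcendentally normalised basis $\z_{S}(M/\Q)$ realises, via $\per_{p}$, precisely the integral $\zp$-structure of étale cohomology. The only known strategy is to construct a norm-compatible system of global cohomology classes (Soul{\'e} cyclotomic elements for Dirichlet motives, Beilinson--Kato elements for modular motives, elliptic units for CM motives), to establish explicit reciprocity laws comparing their dual exponentials to $p$-adic $L$-values, and then to combine this with the cyclotomic Iwasawa main conjecture in order to descend from a statement over the cyclotomic tower to the integral statement (2) over $\Q$. Since unconditional execution of this program is available only in restricted cases, the sketch above must be regarded as a conditional blueprint: the conjecture remains open in the generality of its statement, and the rest of the present manuscript is devoted precisely to identifying the correct $p$-adic framework in which an analogous integral statement can be formulated (and partially proved) for families parametrised by Hecke algebras.
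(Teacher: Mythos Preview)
The statement is a \emph{conjecture}, not a theorem, and the paper offers no proof of it: it is stated as background, attributed to Bloch--Kato in the formulation of Fontaine, Kato, and Fontaine--Perrin-Riou, and then used as the template against which the rest of the paper's constructions are measured. There is therefore nothing in the paper to compare your proposal against.

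Your write-up is not really a proof attempt but an exposition of the architecture of the conjecture (how $\Delta_{S}(M/\Q)$ and the two period maps are built, why property (1) is a normalisation, and why property (2) is the substantive content), together with an honest acknowledgement that (2) remains open in general. That is an accurate and useful summary of the state of affairs, and your final paragraph correctly identifies the known strategies (Euler systems plus explicit reciprocity plus Iwasawa main conjectures). But you should be aware that you are not proving anything here, and neither does the paper: the only sense in which the paper ``establishes'' this statement is to record it as the target conjecture. If the exercise was to reproduce the paper's proof, the correct answer is that there is none.
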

As the notation suggests, the conjectural definition of $\per_{\C}$ and $\per_{p}$ is independent of the choice of the set $S$.

\subsubsection{An interlude on function fields}\label{SubFinite}
Let $f:X\fleche\Fp_{q}$ be a finite separated scheme over a field of positive characteristic $\ell\neq p$. Denote $X\times_{\Fp_{q}}{\Fpbar_{q}}$ by $\bar{X}$ and let $\Lambda$ be a complete local noetherian ring of residual characteristic $p$. Let $\Fcali$ be a complex of smooth sheaves of $\Lambda$-modules on $X$ (that is to say a complex of projective systems of locally constant sheaves of torsion $\Lambda$-modules with transition maps compatible with reduction modulo $\mgot^{n}_{\Lambda}$). Denote by 
\begin{equation}\nonumber
\Fcali^{*}=\RHom_{\Lambda}(\Fcali,f^{!}\Lambda).
\end{equation}
its dual complex of sheaves. The $L$-function $L(X,\Fcali^{*},t)$ of $\Fcali^{*}$
\begin{equation}\nonumber
L(X,\Fcali^{*},t)=\produit{x\in X}{}\frac{1}{\det(1-\Fr_{x}t^{\deg(x)},\Fcali^{*})}
\end{equation}
is as in \cite[Expose III]{DixExposes} the product of the determinants of the Frobenius morphisms at the stalks of the geometric points of $X$. By the Grothendieck's trace formula of \cite[Théorème 3.2, Rapport]{SGA41/2}, $L(X,\Fcali^{*},t)$ is computed by the action of $1-\Fr$ on $\RGamma_{c}(X\times_{\Fp_{q}}\Fpbar_{q},\Fcali^{*})$ as 
\begin{equation}\label{EqGro}
L(X/\Fp_{q},\Fcali^{*},t)=\produit{i\in\Z}{}\det\left(1-t\Fr,H^{i}_{c}(\bar{X},\Fcali^{*})\right)^{(-1)^{i+1}}
\end{equation}
or equivalently by Poincaré's duality by the action on $\RGamma_{\et}(X\times_{\Fp_{q}}\Fpbar_{q},\Fcali)$ as
\begin{align}\nonumber
L(X/\Fp_{q},\Fcali^{*},t)&=\det\left(1-t\Fr,\RGamma_{c}(\bar{X},\Fcali^{*})\right)^{-1}\\\nonumber
&=\det\left(1-t\Fr,\RGamma_{\et}(\bar{X},\Fcali)\right)^{-1}.
\end{align}
The complexes $\RGamma_{\et}(X,\Fcali)$ and $\RGamma_{\et}(X\times_{\Fp_{q}}\Fpbar_{q},\Fcali)$ fit in the distinguished triangle
\begin{equation}\nonumber
\xymatrix{
\RGamma(X,\Fcali)\ar[rr]&&\RGamma(\bar{X},\Fcali)\ar^{1-\Fr}[ld]\\
&\RGamma(\bar{X},\Fcali)\ar[lu]&
}
\end{equation}
inducing canonical trivializations
\begin{equation}\nonumber
\Det^{-1}_{\Lambda}\RGamma_{\et}(X,\Fcali)\isocan\Det_{\Lambda}\RGamma_{\et}(\bar{X},\Fcali)\tenseur\Det_{\Lambda}^{-1}\RGamma_{\et}(\bar{X},\Fcali)\isocan\Lambda.
\end{equation}
 The spectral sequence
\begin{equation}\nonumber
E^{i,j}=H^{i}(G_{\Fp_{q}},H^{j}(\bar{X},\Fcali))
\end{equation}
arising from composition of the functors $\Gamma_{\et}(\bar{X},-)$ and $\Gamma(G_{\Fp_{q}},-)$ and converging to $H^{i+j}(X,\Fcali)$ induces short exact sequences
\begin{equation}\label{EqSuitesCourtes}
\suiteexacte{}{}{\coker(1-\Fr)|_{H^{i-1}_{\et}(\bar{X},\Fcali)}}{H^{i}(X,\Fcali)}{\ker(1-\Fr)|_{H^{i}_{\et}(\bar{X},\Fcali)}}.
\end{equation}
Under the standard hypothesis that $1-\Fr$ acts semi-simply on $H^{i}_{\et}(\bar{X},\Fcali)$, there is an isomorphism between $\ker(1-\Fr)|_{H^{i}_{\et}(\bar{X},\Fcali)}$ and $\coker(1-\Fr)|_{H^{i}_{\et}(\bar{X},\Fcali)}$. Hence, the short exact sequences \eqref{EqSuitesCourtes} define an isomorphism 
\begin{equation}\nonumber
\psi:\Det^{-1}_{\Lambda}\RGamma_{\et}(X,\Fcali)\overset{}{\simeq}\Lambda.
\end{equation}
Equation \eqref{EqGro} can then be reformulated as stating that the pre-image of $L^{*}(X/\Fp_{q},\Fcali^{*},1)$ through $\psi$ in
\begin{equation}\label{EqFinite}
\xymatrix{
\Det^{-1}_{\Lambda}\RGamma_{\et}(X,\Fcali)\ar[r]^{\psi}\ar[d]&\Lambda\ni L^{*}(X/\Fp_{q},\Fcali^{*},1)\\
\Det^{-1}_{\Lambda}\RGamma_{\et}(\bar{X},\Fcali)\tenseur_{\Lambda}\Det_{\Lambda}\RGamma_{\et}(\bar{X},\Fcali)\ar[d]\\
\Lambda\ni 1&
}
%%\simeq\Det^{-1}_{R}\RGamma_{c}(X\times_{\Fp_{q}}\bar{\Fp}_{q})\tenseur_{R}\Det_{R}\RGamma_{c}(X\times_{\Fp_{q}}\bar{\Fp}_{q})\simeq R
\end{equation}
is the basis of $\Det^{-1}_{\Lambda}\RGamma_{\et}(X,\Fcali)$ sent to $1\in\Lambda$ by the vertical arrows. Denote this basis of $\Det^{-1}_{\Lambda}\RGamma_{\et}(X,\Fcali)$ by $\z_{\Lambda}(X,\Fcali)$.

The properties of the pair $(\z_{\Lambda}(X,\Fcali),\Det^{-1}_{\Lambda}\RGamma_{\et}(X,\Fcali))$ are highly reminiscent of the properties of the pair $(\z_{S}(M/\Q),\Delta_{S}(M/\Q))$ of conjecture \ref{ConjTNC}. The Tamagawa Number Conjecture can thus be considered as an analogue of Grothendieck's trace formula for motives over number fields. As it was stated in sub-section \ref{SubTNC}, however, the TNC falls short of being a completely satisfying analogy. Firstly, Grothendieck's trace formula and more generally the construction of \eqref{EqGro} is valid for a complexes of sheaves of $\Lambda$-modules whereas the TNC is formulated only for sheaves on $\Spec\Z[1/S]$ arising from a motive. Secondly, Grothendieck's trace formula is compatible with proper base change via the morphism $f:X\fleche Y$ in the sense that 
\begin{equation}\label{EqCompProper}
\z_{\Lambda}(X,\Fcali)=\z_{\Lambda}(Y,Rf_{*}\Fcali)
\end{equation}
in
\begin{equation}\nonumber
\Det^{-1}_{\Lambda}\RGamma_{\et}(X,\Fcali)=\Det^{-1}_{\Lambda}(Y,Rf_{*}\Fcali).
\end{equation}
Thirdly, the pair $\z_{\Lambda}(X,\Fcali)\in\Det^{-1}_{\Lambda}\RGamma_{\et}(X,\Fcali)$ is compatible with derived change of rings of coefficients in the sense that  
\begin{equation}\label{EqCompChange}
\z_{\Lambda}(X,\Fcali)\tenseur1=\z_{\Lambda'}(X,\Fcali\Ltenseur\Lambda')
\end{equation}
in
\begin{equation}\nonumber
\Det^{-1}_{\Lambda'}\RGamma_{\et}(X,\Fcali)\Ltenseur_{\Lambda}\Lambda'=\Det^{-1}_{\Lambda'}(Y,\Fcali\Ltenseur_{\Lambda}\Lambda').
\end{equation}
The TNC \textit{a priori} satisfies none of these supplementary properties.
%while the rest of manuscript explains a refinement (and correction) of its generalization to incorporate compatibilities with change of rings of coefficients 
\subsubsection{Compatibility with $\Spec\Ocal_{K}\fleche\Spec\Z$}\label{SubComp}
Following the model of $L$-function of schemes over finite fields, we recall the classical generalization due to \cite{KatoHodgeIwasawa} (see also \cite{BurnsFlachMotivic}) of conjecture \ref{ConjTNC} to incorporate the compatibility with proper base change \eqref{EqCompProper}.

Let $K$ be a finite abelian extension of $\Q$ with Galois group $G$ and ring of integers $\Ocal_{K}$. To $K$ is attached a rational motive $h^{0}(\Spec K)$ whose Betti, de Rham and $p$-adic étale realizations are respectively $\Q[G]$, $K$ with the filtration $F^{0}K=K$ and $F^{1}K=0$ and natural action of $G$ and $\qp[G]$. If as in sub-section \ref{SubTNC} we denote by $M$ a motive over $\Q$ and by $S$ a finite set of rational primes, there exists a rational motive $M_{K}=M\otimes h^{0}(\Spec K)$. The local Euler factor $\Eul_{\ell}(M_{K},X)$ for $\ell\neq p$ then verifies
\begin{equation}\nonumber
\Eul_{\ell}(M_{K},X)=\det(1-\Fr(\ell)X|M^{I_{\ell}}_{K,\et,p})=\produit{\chi\in\hat{G}}{}\det\left(1-\Fr(\ell)X|(M\tenseur\Qbar_{p}(\chi))^{I_{\ell}}\right).
\end{equation}
We assume the conjecture that it belongs to $\Q[G][X]$. The $S$-partial $L$-function
\begin{equation}\nonumber
L_{S}(M_{K},s)=\produit{\ell\notin S}{}\frac{1}{\Eul_{\ell}(M_{K},p^{-s})}\in\C[G]^{\C}
\end{equation}
is the product of the Euler factors for $\ell\notin S$.

The following conjecture, due to \cite{KatoHodgeIwasawa}, expresses the compatibility of conjecture \ref{ConjTNC} with the proper base change map $\Spec\Ocal_{K}\fleche\Spec\Z$. We henceforth refer to it as the $p$-part of the Equivariant Tamagawa Number Conjecture for the motive $M$, the extension $K$ and the set $S$ or, more commonly and more simply, as the ETNC.
\begin{Conj}[Equivariant Tamagawa Number Conjecture]\label{ConjETNC}
For all finite abelian extension $K$ of $\Q$ with Galois group $G$ and for all finite set of primes $S\supset\{p\}$, there exists an equivariant fundamental line $\Delta_{K,S}(M/\Q)$ which is a $\Q[G]$-vector space of dimension 1 equipped with a canonical isomorphism 
\begin{equation}\nonumber
\per_{\C[G]}:\Delta_{K,S}(M/\Q)\tenseur_{\Q[G]}\C\isocan\C[G]
\end{equation}
of complex periods and with a canonical isomorphism 
\begin{equation}\nonumber
\per_{p}:\Delta_{K,S}(M/\Q)\tenseur_{\Q[G]}\qp\isocan\Det^{-1}_{\qp[G]}\RGamma_{\et}(\Ocal_{K}[1/S],M_{\et,p})\tenseur_{\qp[G]}\Det^{-1}_{\qp[G]}(M_{K,\et,p}^{+})
\end{equation}
of $p$-adic periods as well as an equivariant motivic element $\z_{K,S}(M/\Q)$ which is a basis of $\Delta_{K,S}(M/\Q)$ verifying the following properties.
\begin{enumerate}
\item The image of $\z_{K,S}(M/\Q)\tenseur1$ through $\per_{\C}$ is equal to $L^{*}_{S}(M_{K}^{*}(1),0)$.
\item The image of the $\zp[G]$-lattice $\zp[G]\cdot(\z_{K,S}(M/\Q)\tenseur1)$ through $\per_{p}$ is equal to 
\begin{equation}\nonumber
\Det^{-1}_{\zp[G]}\RGamma_{\et}(\Ocal_{K}[1/S],T)\tenseur_{\zp[G]}\Det^{-1}_{\zp[G]}((T\tenseur_{\zp}\zp[G])^{+})
\end{equation}
for any choice of $G_{\Q}$-stable $\zp$-lattice $T$ inside $M_{\et,p}$.
\end{enumerate}
\end{Conj}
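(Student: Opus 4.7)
The plan is to construct the four data (the fundamental line $\Delta_{K,S}(M/\Q)$, the complex and $p$-adic period isomorphisms $\per_{\C[G]}$ and $\per_p$, and the equivariant motivic basis $\z_{K,S}(M/\Q)$) in the Fontaine--Perrin-Riou formalism refined equivariantly by $\Q[G]$, and then verify properties (1) and (2). I would exploit the fact that, for $M_K = M\tenseur h^0(\Spec K)$, every realization $M_{K,?}$ is naturally a free module of rank $\rank M$ over $?[G]$ for $?\in\{\Q,\qp,K\}$, so that all relevant complexes remain perfect over $\Q[G]$ and the non-equivariant determinants promote to $\Q[G]$-determinants by keeping track of the $G$-action throughout. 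The diagonal $G$-action on $M\tenseur h^0(\Spec K)$ is then what pairs with the $G$-action on the coefficient ring to produce $[G]$-equivariant structures.

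First I would set
\begin{equation}\nonumber
\Delta_{K,S}(M/\Q)=\Det_{\Q[G]}^{-1}(M_{K,B}^{+})\tenseur\Det_{\Q[G]}(t_{M_K})\tenseur\Det_{\Q[G]}(\Hunf(K,M_K)^{*})\tenseur\Det_{\Q[G]}^{-1}(H^{0}(K,M_K))\tenseur\cdots
\end{equation}
following the standard recipe, with the motivic cohomology of $M_K$ over $K$ replacing that of $M$ over $\Q$ and $t_{M_K}=M_{K,\dR}/F^{0}M_{K,\dR}$. The complex period $\per_{\C[G]}$ is assembled from the Betti--de Rham comparison $M_{K,B}\tenseur\C\isocan M_{K,\dR}\tenseur\C$, from the Beilinson regulator into Deligne cohomology, and from Poincar\'e's duality pairings, all of which commute with the natural $G$-action by functoriality of $K\mapsto M_K$. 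The $p$-adic period $\per_p$ would be built analogously from the global duality long exact triangle expressing $\RGamma_{\et}(\Ocal_K[1/S],M_{K,\et,p})$ in terms of the local cohomologies at $v\in S$ and of the Bloch--Kato Selmer complex, from the Bloch--Kato exponential at places above $p$ and the unramified computations at the remaining finite places of $S$, and from the Chern class maps from motivic cohomology into global \'etale cohomology; the resulting $\qp[G]$-isomorphism descends to a $\Q[G]$-structure by comparison with the complex period and rationality of the motive.

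The integrality clause (2) would then, in principle, reduce to the non-equivariant TNC for each character twist $M\tenseur\chi$ of $M$, combined with the compatibility with the finite flat morphism $\Spec\Ocal_K[1/S]\fleche\Spec\Z[1/S]$ along the lines of the general compatibility \eqref{EqCompProper}, after passing through the Tate formula \cite[Theorem 2.2]{TateBSD} to handle the freeness of $G_\Q$-stable lattices. The genuine obstacle is of course the existence of the equivariant motivic element $\z_{K,S}(M/\Q)$ satisfying both (1) and (2): property (1) is Beilinson's conjecture for $M_K$, while property (2) is the integral equivariant statement at $p$, and both are open in general. The constructive route taken in the rest of the manuscript is then to (i) focus on the $p$-adic half by replacing the motivic $\z_{K,S}(M/\Q)$ by an intrinsically $p$-adic basis of $\Det_{\zp[G]}^{-1}\RGamma_{\et}(\Ocal_K[1/S],T)\tenseur\Det_{\zp[G]}^{-1}((T\tenseur\zp[G])^{+})$, and (ii) assemble such bases into a $p$-adic analytic family as $M$ varies in a $p$-adic family of automorphic Galois representations parametrized by a Hecke algebra, at which point the interpolation property of theorem \ref{TheoIntro} takes the place of a direct motivic construction and reformulates the ETNC as a statement about algebraic $p$-adic $L$-functions.
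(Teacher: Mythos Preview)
The statement you are attempting to prove is a \emph{conjecture}, not a theorem: the paper states the Equivariant Tamagawa Number Conjecture as Conjecture~\ref{ConjETNC} and offers no proof, precisely because no proof is known in general. There is therefore no proof in the paper to compare your proposal against.

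Your proposal is not really a proof attempt but rather an outline of how the objects $\Delta_{K,S}(M/\Q)$, $\per_{\C[G]}$, $\per_p$ are conjecturally constructed in the Fontaine--Perrin-Riou formalism, followed by the honest acknowledgment that the existence of $\z_{K,S}(M/\Q)$ satisfying (1) and (2) ``are open in general.'' That acknowledgment is correct and is the whole point: property (1) contains Beilinson's conjecture and property (2) is the integral $p$-adic statement, neither of which is known for general motives. The closing paragraph, where you pivot to replacing the motivic basis by a $p$-adic one and interpolating in families, is not a proof strategy for the conjecture itself but a description of the paper's program for studying the conjecture's \emph{consequences and compatibilities}; it does not reduce the conjecture to anything known. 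So there is no gap to name beyond the one you yourself identify: the conjecture is open, and your write-up does not (and could not, at present) close it.
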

We record the following important fact. Suppose $L/\Q$ is a finite abelian extension, that $\Q\subset K\subset L$ is a finite sub-extension, that $S_{K}$ is a finite set of primes containing exactly $p$ and all the primes of $\Q$ ramifying in $K$ and that $S_{L}$ is a finite set of primes containing exactly $p$ and all the primes of $\Q$ ramifying in $L$. If conjecture \ref{ConjETNC} is true for the triplets $(M,K,S_{K})$ and $(M,L,S_{L})$, then the trace from $\C[\Gal(L/\Q)]$ to $\C[\Gal(K/\Q)]$ sends 
$L_{S_{L}}(M_{L}^{*}(1),s)$ to
\begin{equation}\nonumber
L_{S_{K}}(M_{K}^{*}(1),s)\produit{\ell\in S_{L}\backslash S_{K}}{}\Eul_{\ell}(M^{*}(1),p^{-s})
\end{equation}
and thus likewise the trace from $\qp[\Gal(L/\Q)]$ to $\qp[\Gal(K/\Q)]$ sends $\z_{L,S_{L}}(M)\tenseur1$ to 
\begin{equation}\nonumber
(\z_{K,S_{K}}(M)\tenseur1)\produit{\ell\in S_{L}\backslash S_{K}}{}\Eul_{\ell}(M^{*}(1),1).
\end{equation}
Hence the system of classes $\{\z_{L,S_{L}}(M)\}_{L}$ for $\Q\subset L\subset\Q^{ab}$ satisfies the relations defining an Euler system. Because an analogous result plays an important role further below, we restate this point: the compatibility of the TNC with proper base change morphisms implies that motivic zeta elements form an Euler system.

The collection of ETNC for the abelian $p$-extensions of $\Q$ imply a generalization of conjecture \ref{ConjTNC} with coefficients in rings of larger dimension and is thus a prototype of the phenomena we discuss in subsequent sub-sections. For $n\geq1$, let $\Q_{n}/\Q$ be the sub-extension of $\Q(\zeta_{p^{n}})$ with Galois group isomorphic to $\Z/p^{n-1}\Z$ and let $\Ocal_{n}$ be its ring of integers. Let $\Q_{\infty}/\Q$ be the unique $\zp$-extension of $\Q$, that is to say the union of the $\Q_{n}$. Let $\Gamma$ be the Galois group $\Gal(\Q_{\infty}/\Q)$ and let $\Lambda$ denote as usual the 2-dimensional regular local ring equal to the completed group algebra $\zp[[\Gamma]]$. The ring $\Lambda$ is endowed with a $G_{\Q}$-action through the surjection of $G_{\Q}$ onto $\Gamma$ followed by the inclusion of $\Gamma$ in $\Lambda\croix$. The cyclotomic deformation $(T\tenseur\Lambda,\rho_{\Lambda},\Lambda)$ is the $G_{\Q}$-representation $T\tenseur_{\zp}\Lambda$ with $G_{\Q}$-action on both sides of the tensor product. Likewise, let $\mathbf{\Lambda}$ be the completed group-algebra $\zp[[\Gal(\Q(\zeta_{p^{\infty}})/\Q)]]$ and let $T\tenseur\mathbf{\Lambda}$ be the $G_{\Q}$-representation with action on both sides of the tensor product. Assuming the truth of conjectures \ref{ConjETNC} for $M$ and $\Q(\zeta_{p^{n}})/\Q$ for all $n\geq0$ yields an element $\z_{\mathbf{\Lambda},S}(M/\Q)$ which is a basis of $\Det^{-1}_{\Lambdaf}\RGamma_{\et}(\Z[1/p],T\tenseur_{\zp}\Lambdaf)\tenseur_{\Lambdaf}\Det^{-1}_{\Lambdaf}(T\tenseur_{\zp}\Lambdaf)^{+}$ and such that 
\begin{equation}\label{EqCompChangeMotive}
\z_{\Lambdaf,S}(M/\Q)\tenseur1=\z_{K,S}(M/\Q)
\end{equation}
through the canonical isomorphism between
\begin{equation}\nonumber
\left(\Det^{-1}_{\Lambdaf}\RGamma_{\et}(\Z[1/p],T\tenseur_{\zp}\Lambdaf)\tenseur_{\Lambdaf}\Det^{-1}_{\Lambdaf}(T\tenseur_{\zp}\Lambdaf)^{+}\right)\tenseur_{\Lambdaf}\zp[G]
\end{equation}
and 
\begin{equation}\nonumber
\Det^{-1}_{\zp[G]}\RGamma_{\et}(\Z[1/p],T\tenseur_{\zp}\zp[G])\tenseur_{\zp[G]}\Det^{-1}_{\zp[G]}(T\tenseur_{\zp}\zp[G])^{+}
\end{equation}
for all finite sub-extension $\Q\subset K\subset\Q(\zeta_{p^{\infty}})$ with Galois group $G$. Hence, a statement analogous to \eqref{EqCompChange} holds for the pair of motivic zeta elements $\z_{\Lambdaf,S}(M/\Q)$ and $\z_{K,S}(M/\Q)$ with coefficients in $\Lambdaf$ and $\zp[G]$ respectively.

\subsubsection{Compatibility with change of coefficients}\label{SubChange}
The compatibility of conjecture \ref{ConjTNC} (or for that matter of conjecture \ref{ConjETNC}) with change of ring of coefficients in the sense of equations \eqref{EqCompChange} and \eqref{EqCompChangeMotive} for arbitrary pairs of rings is much harder to even formulate. A tentative statement for complete local noetherian rings with residual field of characteristic $p$ would be as follows. 

Suppose that $\Fcali$ is a smooth étale sheaf (or even a perfect complex of smooth étale sheaves) of $\Lambda$-modules on $\Spec\Z[1/p]$ (here $\Lambda$ is again as in section \ref{SubFinite} a complete local noetherian ring). Suppose that for some $x\in\Hom(\Lambda,\Qbar_{p})$, the base-changed sheaf $\Fcali_{x}=\Fcali\tenseur_{\Lambda,x}\Qbar_{p}$ arises as the $G_{\Q}$-representation attached to a pure motive $M_{x}$. We would then define the fundamental line of $\Fcali$ as 
\begin{equation}\nonumber
\Delta_{\Lambda}(\Spec\Z[1/S],\Fcali)=\Det^{-1}_{\Lambda}\RGamma_{\et}(\Z[1/S],\Fcali)\tenseur_{\Lambda}\Det^{-1}_{\Lambda}\Fcali^{+}
\end{equation}
and conjecture the existence of a basis $\z_{\Lambda}(\Spec\Z[1/S],\Fcali)$ of $\Delta_{\Lambda}(\Spec\Z[1/S],\Fcali)$ which satisfies the following properties.
\begin{enumerate}
\item If $f$ is the structural morphism $\Spec\Ocal_{K}[1/S]\fleche\Spec\Z[1/S]$ for $K$ a finite abelian extension of $\Q$, then 
\begin{equation}\label{EqCompProperF}
\z_{\Lambda}(\Spec\Ocal_{K}[1/S],\Fcali)=\z_{\Lambda}(\Spec\Z[1/S],Rf_{*}\Fcali)
\end{equation}
in
\begin{equation}\nonumber
\Delta_{\Lambda}(\Spec\Ocal_{K}[1/S],\Fcali)=\Det^{-1}_{\Lambda}(\Spec\Z[1/S],Rf_{*}\Fcali).
\end{equation}
\item If $\pi:\Lambda\fleche\Lambda'$ is a local morphism of completed local noetherian rings as above, the pair $\z_{\Lambda}(\Spec\Z[1/S],\Fcali)\in\Delta_{\Lambda}(\Spec\Z[1/S],\Fcali)$ is compatible with derived change of rings of coefficients in the sense that  
\begin{equation}\label{EqCompChangeF}
\z_{\Lambda}(\Spec\Z[1/S],\Fcali)\tenseur1=\z_{\Lambda'}(\Spec\Z[1/S],\Fcali\Ltenseur\Lambda')
\end{equation}
in
\begin{equation}\nonumber
\Delta_{\Lambda}\tenseur_{\Lambda}\Lambda'\isocan\Delta_{\Lambda'}(\Spec\Z[1/S],\Fcali\Ltenseur_{\Lambda}\Lambda').\end{equation}
\item The zeta element $\z_{\zp}(\Spec\Z[1/S],\Fcali_{x})$ coincides with the zeta element $\z_{S}(M_{x}/\Q)\tenseur1$.
\end{enumerate}
This formulation is the one chosen in \cite{KatoViaBdR}. 

Though it has the merit of finally putting conjectures for motives on number fields on an equal footing with schemes over finite fields,\footnote{And, more trivially, though \cite[Conjecture 3.2.2]{KatoViaBdR} has been the single most influential mathematical statement on the thought process of the first named author.} there are reasons to believe it is not completely satisfying. To start with, actually existing $p$-adic families of motives are rarely known to satisfy the very first hypotheses required to formulate the conjecture: outside the case of families of twists by $\zp$-extensions (which strictly speaking reflect the compatibility \eqref{EqCompProperF} rather than the compatibility \eqref{EqCompChangeF}), it seems that no $p$-adic families of motives are known to yield smooth étale sheaves over $\Spec\Z[1/p]$. As can be seen already for the étale cohomology of modular curves, the problem is that there seems to be no reason for the $I_{\ell}$-invariants of $\Fcali$ to form a perfect complex of étale sheaves on $\Spec\Z_{\ell}$ at places of ramification when the ring of coefficients is sufficiently general, and in particular when it is taken to be the Hecke algebra. Besides, and more worryingly, there are reasons to believe that the conjecture outlined above is actually not quite correct; a point to which we return in sub-section \ref{SubHecke} below. 

\subsection{Congruences between motives}
As the formalism of perfect complexes of étale sheaves on $\Spec\Z[1/p]$ is not known to apply to actually existing $p$-adic family of motives, we introduce in this sub-section a general framework in which to carry the study of congruences between special values of $L$-functions of motives and the compatibility of the TNC with base change of ring of coefficients.
\subsubsection{Determinants} 
We recall from \cite{ChenevierDeterminant} that an $n$-dimensional $A$-valued determinant on an $A$-algebra $R$ is a multiplicative $A$-polynomial law
\begin{equation}\nonumber
D:R\fleche A
\end{equation}
homogeneous of degree $n$ (see \cite[Section 1.5]{ChenevierDeterminant}). An $A$-valued determinant on $A[G]$ is also called an $A$-valued determinant on $G$. Because an $A$-valued determinant on $R$ is extended in an obvious way in an $A[X]$-valued determinant on $R[X]$, we do not distinguish between these two notions. When there exists an $A$-valued determinant on a group $G$, the characteristic polynomial $\chi(D,g)\in A[X]$ of an element $g\in G$ is defined to be $D(1-Xg)$. Determinants, and hence characteristic polynomials, are compatible with base change of ring of coefficients.

Let $M$ be a rational motive as in the previous sub-section. By compactness, $(M_{\et,p},\rho,\qp)$ admits a free $\zp$-lattice stable under the action of $G_{\Q}$. The map
\applicationsimple{D}{\zp[G_{\Q}]}{\zp}
sending ${g\in G_{\Q}}$ to ${\det\rho(g)}$ is a determinant which we call the determinant attached to $M$ or to $M_{\et,p}$. Composing $D$ with reduction modulo $p$ and injection into $\bar{\Fp}_{p}$ yields an $n$-dimensional determinant
\begin{equation}\nonumber
\bar{D}:\bar{\Fp}_{p}[G_{\Q}]\fleche\bar{\Fp}_{p}
\end{equation}
and hence, by \cite[Theorem 2.12]{ChenevierDeterminant}, a semi-simple $n$-dimensional residual representation
\begin{equation}\nonumber
\rhobar:G_{\Q}\fleche\Aut_{\Fpbar_{p}}(\bar{M}_{\et,p})
\end{equation}
unique up to isomorphism which we call the residual representation attached to $M$ (or $M_{\et,p}$). Two motives $M_{1}$ and $M_{2}$ are said to be congruent modulo $p$ if their residual representations are isomorphic.

More generally, let $R$ be a complete local noetherian ring with maximal ideal $\mgot$ and finite residual field $k=R/\mgot$ of characteristic $p$. Let $\Sigma$ be a finite set of finite primes containing $p$. Let
\applicationsimple{D}{R[G_{\Q,\Sigma}]}{R}
be a degree $n$ determinant. A specialization of $R$ is a morphism $\psi:R\fleche S$ of local $\zp$-algebras. If $\psi:R\fleche S$ is a specialization of $R$, we denote by $D_{\psi}$ the determinant $\psi\circ D$ and call it the specialization of $D$ at $\psi$.
If $\psi$ has values in an algebraically closed field $\Kbar$, then there exists an $n$-dimensional semi-simple $G_{\Q}$-representation $\rho_{\psi}$ such that $D_{\psi}=\psi\circ D$. The $G_{\Q,\Sigma}$-representation $\rhobar$ attached in this way to the ring morphism
\begin{equation}\nonumber
R\fleche k\plonge\kbar
\end{equation}
is called the residual representation attached to $D$. Assume there exists a set of ring morphisms $\Hom^{\cl}(R,\Qbar_{p})$ such that for all $\psi\in\Hom^{\cl}(R,\Qbar_{p})$, the $G_{\Q}$-representation $(M_{\psi},\rho_{\psi},\Qbar_{p})$ is the $G_{\Q}$-representation attached to the $p$-adic étale realization of a pure motive $M_{\psi}$. We then call the set
\begin{equation}\nonumber
\mathscr{F}=\{M_{\psi}|\psi\in\Hom(R,\Qbar_{p})\}
\end{equation}
the $p$-adic family of $G_{\Q}$-representations parametrized by $\Spec R$ or with coefficients in $R$ attached to $D$. Though we typically have in mind the case of a ring $R$ of large Krull dimension, we note that a single $p$-adic $G_{\Q}$-representation is a family in this sense. In a slight abuse of terminology, we say that a motive $M$ belongs to the family $\Fcali$ if its $p$-adic étale realization (base-changed to $\Qbar_{p}$) belongs to $\Fcali$ and consequently sometimes refer to $\Fcali$ as a $p$-adic family of motives (this is reasonably harmless when $\Hom^{\cl}(R,\Qbar_{p})$ is Zariski dense in $\Hom(R,\Qbar_{p})$ but is potentially misleading otherwise). 

If $\psi:R\fleche S$ is a specialization of $R$ (not necessarily attached to a motive $M_{\psi}$), we denote by $\Fcali_{\psi}$ the $p$-adic family attached to the determinant $D_{\psi}$.

If two motives $M_{1}$ and $M_{2}$ belong to the same $p$-adic family $\Fcali$ with coefficients in $R$ and attached to $D$, the determinants $D_{1}$ and $D_{2}$ attached to them are specializations of $D$ and so their residual representations are identical up to isomorphism. Thus, $M_{1}$ and $M_{2}$ are congruent modulo $p$. Conversely, suppose $M_{1}$ and $M_{2}$ are congruent modulo $p$. There then exist a finite extension $k$ of $\Fp_{p}$, a finite set of primes $\Sigma$ containing $p$ and a determinant
\begin{equation}\nonumber
\Dbar:G_{\Q,\Sigma}\fleche k
\end{equation}
such that the determinant of the residual representations attached to $M_{1}$ and $M_{2}$ are both equal to $\Dbar$. By \cite[Proposition 3.3]{ChenevierDeterminant}, the functor sending a profinite local $W(k)$-algebra to the set of $W(k)$-valued determinants on $G_{\Q,S}$ deforming $\Dbar$ is then pro-representable. In particular, there exists a complete local noetherian $W(k)$-algebra $R$ and a determinant 
\begin{equation}\nonumber
D:R[G_{\Q,\Sigma}]\fleche R
\end{equation}
such that $D_{1}$ and $D_{2}$ are specializations of $D$. So $M_{1}$ and $M_{2}$ belong to the same $p$-adic family $\Fcali$. Hence, belonging to a $p$-adic family or being congruent are equivalent notions and we henceforth use freely one for the other.

More generally still, we might consider higher congruences between motives. Two rational motives $M_{1}$ and $M_{2}$ are congruent modulo $p^{n}$ if they belong to a common family parametrized by $R$ with maximal ideal $\mgot$ and if in addition their determinants $D_{1}$ and $D_{2}$ induce the same determinant 
\begin{equation}\nonumber
D:\Z/p^{n}\Z[G_{\Q}]\fleche \Z/p^{n}\Z.
\end{equation}
This notion is extended in the obvious way to the case of motives congruent modulo $\mgot^{n}$.
\subsubsection{Algebraic local Euler factors for families}
Let $S$ be a finite set of rational primes and let $R$ be a complete local noetherian domain with fraction field $\Kcal$. Let $\Fcali$ be a $p$-adic family parametrized by $R$ attached to a determinant $D$ of degree $n$ on $G_{\Q,S}$ as in the previous sub-section. There exists a unique semi-simple $G_{\Q,S}$-representation $(\Vcal,\rho,\bar{\Kcal})$ such that $D$ is equal to $\det\circ\rho$. Let $\ell\neq p$ be a finite prime. We denote by $D^{I_{\ell}}$ the determinant on $G_{\Q_{\ell}}/I_{\ell}$ with values \textit{a priori} in the normalization $\Rcal$ of $R$ equal to the determinant of $\rho|_{G_{\Q_{\ell}}}$ restricted to $\Vcal^{I_{\ell}}$ and we denote by $\Fcali^{I_{\ell}}$ the corresponding $p$-adic family. 

The assumption that $R$ is a domain can be relaxed in a variety of situation. We state the following strengthening as it is most useful to us. Let $R$ be a complete local reduced noetherian ring with algebraically closed residue field. Assume that the residual representation $\rhobar$ attached to $D$ is absolutely irreducible.  Then there exists by \cite[Theorem 2.22]{ChenevierDeterminant} a $G_{\Q,S}$-representation $(\Tcal,\rho,R)$ such that $D=\det\circ\rho$ and, as in the previous paragraph, we define $D^{I_{\ell}}$ to be the $\Rcal$-valued determinant on $G_{\Q_{\ell}}/I_{\ell}$ restricted to $\Tcal^{I_{\ell}}$.

In both cases, we say that $\Fcali$ is unramified at $\ell$ if $D^{I_{\ell}}$ is of degree $n$. We note the obvious but important fact that if $\psi$ is a specialization of $\Rcal$ with values in a field $F$, then $\psi\circ D^{I_{\ell}}$ has no reason to be equal to $(\psi\circ D)^{I_{\ell}}$.

\begin{DefEnglish}
The algebraic local polynomial determinant at $\ell$ is the projective (hence free) $\Rcal[X]$-module 
\begin{equation}\nonumber
\Lcali_{\ell}(\Fcali,X)=\Det_{\Rcal[X]}\left[\Rcal[X]\overset{\chi(D^{I_{\ell}},\Fr(\ell))}{\fleche}\Rcal[X]\right]
\end{equation}
where the two $\Rcal[X]$ are placed in degree 0 and 1 respectively. The algebraic local determinant at $\ell$ is the base-change
\begin{equation}\nonumber
\Lcali_{\ell}(\Fcali)=\Lcali_{\ell}(\Fcali,X)\tenseur_{\Rcal[X],\pi}\Rcal.
\end{equation}
under the quotient map
\begin{equation}\nonumber
\pi:\Rcal[X]\fleche\Rcal[X]/(X-1).
\end{equation}
The algebraic local Euler factor $\Eul_{\ell}(\Fcali,X)\in\Rcal[X]$ at $\ell$ is $\chi(D^{I_{\ell}},\Fr(\ell))$. 
\end{DefEnglish}
The constant coefficient of a characteristic polynomial is equal to 1 so $\chi(\Fr(\ell))$ is a regular element of $\Rcal[X]$ and so the complex 
\begin{equation}\nonumber
\left[\Rcal[X]\overset{\chi(D^{I_{\ell}},\Fr(\ell))}{\fleche}\Rcal[X]\right]
\end{equation}
is acyclic after tensor with the total ring of fractions $Q(\Rcal[X])$ of $\Rcal[X]$. The algebraic local Euler factor $\Eul_{\ell}(\Fcali,X)$ is thus also the monic generator of the ideal equal to the image of the algebraic local determinant
\begin{equation}\nonumber
\Det_{\Rcal[X]}\left[\Rcal[X]\overset{\chi(D^{I_{\ell}},\Fr(\ell))}{\fleche}\Rcal[X]\right]
\end{equation}
through the map 
\begin{align}\nonumber
\Det_{\Rcal[X]}\left[\Rcal[X]\overset{\chi(D^{I_{\ell}},\Fr(\ell))}{\fleche}\Rcal[X]\right]&\plonge\Det_{Q(\Rcal[X])}\left[Q(\Rcal[X])\overset{\chi(D^{I_{\ell}},\Fr(\ell))}{\fleche}Q(\Rcal[X])\right]\\\nonumber
&\isocan\Det_{Q(\Rcal[X])}(0)\isocan Q(\Rcal[X])
\end{align}
where the last canonical isomorphism is the canonical isomorphism required in the definition of the determinant functor.
\begin{LemEnglish}\label{LemParfait}
Assume that there exists a $G_{\Q_{\ell}}$-representation $(\Tcal,\rho,\Rcal)$ such that $D^{I_{\ell}}=\det\circ\rho^{I_{\ell}}$ and that $\Tcal^{I_{\ell}}$ is an $\Rcal$-module of finite projective dimension. Then there is a canonical isomorphism 
\begin{equation}\nonumber
\Lcali_{\ell}(\Fcali)\isocan\Det_{\Rcal}\RGamma(G_{\Q_{\ell}}/I_{\ell},\Tcal^{I_{\ell}})
\end{equation}
\end{LemEnglish}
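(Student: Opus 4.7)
The plan is to compute each side separately as a Knudsen-Mumford determinant of a two-term complex and match them via standard multiplicativity, together with the identification $\chi(D^{I_{\ell}}, \Fr(\ell))|_{X=1} = D^{I_{\ell}}(1 - \Fr(\ell))$ coming from the hypothesis $D^{I_{\ell}} = \det\circ\rho^{I_{\ell}}$.

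The first step is to observe that $G_{\Q_{\ell}}/I_{\ell}$ is topologically generated by the geometric Frobenius $\Fr(\ell)$ and so is pro-cyclic; for any continuous $\Rcal$-linear representation $M$ of this quotient, continuous cohomology is computed by
\begin{equation}
\nonumber
\RGamma(G_{\Q_{\ell}}/I_{\ell}, M) \simeq \left[M \overset{1-\Fr(\ell)}{\fleche} M\right]
\end{equation}
placed in degrees $0$ and $1$. Applied to $M = \Tcal^{I_{\ell}}$, the finite projective dimension hypothesis guarantees that this complex is perfect and that its Knudsen-Mumford determinant is well-defined.

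The second step is to invoke the multiplicativity of the Knudsen-Mumford determinant functor. Choosing a bounded resolution $P^{\bullet} \to \Tcal^{I_{\ell}}$ by finitely generated projective $\Rcal$-modules and lifting $\Fr(\ell)$ to an endomorphism of $P^{\bullet}$, the total complex of $[P^{\bullet} \overset{1-\Fr(\ell)}{\fleche} P^{\bullet}]$ is quasi-isomorphic to $[\Tcal^{I_{\ell}} \overset{1-\Fr(\ell)}{\fleche} \Tcal^{I_{\ell}}]$. For any finitely generated projective $P$ and endomorphism $f$, the Knudsen-Mumford formalism provides a canonical isomorphism $\Det_{\Rcal}[P \overset{f}{\fleche} P] \isocan \Det_{\Rcal}[\Rcal \overset{\det(f)}{\fleche} \Rcal]$; iterating along the resolution yields a canonical isomorphism
\begin{equation}
\nonumber
\Det_{\Rcal} \RGamma(G_{\Q_{\ell}}/I_{\ell}, \Tcal^{I_{\ell}}) \isocan \Det_{\Rcal}\left[\Rcal \overset{\Delta}{\fleche} \Rcal\right],
\end{equation}
with $\Delta = \prod_{i} \det(1 - \Fr(\ell) \mid P^{i})^{(-1)^{i}}$, independent of the choices involved.

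The third and final step uses the hypothesis $D^{I_{\ell}} = \det \circ \rho^{I_{\ell}}$ to identify $\Delta$ with $D^{I_{\ell}}(1 - \Fr(\ell)) = \chi(D^{I_{\ell}}, \Fr(\ell))|_{X=1}$. Since by definition $\Lcali_{\ell}(\Fcali)$ is the base-change along $\pi : \Rcal[X] \to \Rcal$, $X \mapsto 1$, of $\Det_{\Rcal[X]}[\Rcal[X] \overset{\chi(D^{I_{\ell}}, \Fr(\ell))}{\fleche} \Rcal[X]]$, compatibility of the determinant functor with base change along $\pi$ finally yields the desired canonical isomorphism. The principal obstacle is the multiplicativity statement in the second step: canonically trivializing $\Det_{\Rcal}[P \overset{f}{\fleche} P]$ onto $\Det_{\Rcal}[\Rcal \overset{\det(f)}{\fleche} \Rcal]$ and propagating this trivialization along a finite projective resolution in a way that is manifestly independent of both the resolution and of the chosen lift of $\Fr(\ell)$. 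This lies squarely within the Knudsen-Mumford framework, but careful bookkeeping is needed to ensure the resulting canonical isomorphism is well-defined at the level of graded invertible modules.
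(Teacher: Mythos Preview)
Your proposal is correct and follows essentially the same approach as the paper: the paper's proof consists of the single sentence ``The complex $\RGamma(G_{\Q_{\ell}}/I_{\ell},\Tcal^{I_{\ell}})$ is represented by the complex $[\Tcal^{I_{\ell}}\overset{1-\Fr(\ell)}{\fleche}\Tcal^{I_{\ell}}]$ in degree 0 and 1. Writing a projective resolution of $\Tcal^{I_{\ell}}$ yields the result,'' and your three steps are precisely an unpacking of this. One small notational caution: writing $\Delta = \prod_{i} \det(1 - \Fr(\ell) \mid P^{i})^{(-1)^{i}}$ as an element of $\Rcal$ presupposes that the odd-degree factors are units, which need not hold; the clean way to phrase your second step is entirely at the level of graded invertible modules (tensoring the canonical isomorphisms $\Det_{\Rcal}[P^{i}\overset{1-\Fr(\ell)}{\fleche}P^{i}]\isocan\Det_{\Rcal}[\Rcal\overset{\det(1-\Fr(\ell)|P^{i})}{\fleche}\Rcal]$ with alternating signs), which is in any case what you indicate in your closing remark.
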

\begin{proof}
The complex $\RGamma(G_{\Q_{\ell}}/I_{\ell},\Tcal^{I_{\ell}})$ is represented by the complex
\begin{equation}\nonumber
[\Tcal^{I_{\ell}}\overset{1-\Fr(\ell)}{\fleche}\Tcal^{I_{\ell}}]
\end{equation}
in degree 0 and 1. Writing a projective resolution of $\Tcal^{I_{\ell}}$ yields the result.
\end{proof}
By lemma \ref{LemParfait}, the algebraic local determinant is a generalization to the determinant of the complex of local cochains unramified at $\ell$. 
\begin{TheoEnglish}\label{PropLocal}
Let $\Fcali$ be a $p$-adic family parametrized by a complete local domain $R$ with normalization $\Rcal$. Suppose the set of pure specializations of $\Fcali$ is not empty and let $\psi$ be a pure specialization. Let $\phi:\Rcal\fleche\Scal$ be a ring morphism with value in a normal domain $\Scal$ such that $\ker\phi\subset\ker\psi$ and denote by $\Fcal_{\phi}$ the $p$-adic family attached to $D_{\phi}$. Then there exists a canonical isomorphism
\begin{equation}\nonumber
\Lcali_{\ell}(\Fcali_{\phi},X)\isocan\Lcali_{\ell}(\Fcali,X)\tenseur_{\Rcal,\phi}\Scal
\end{equation}
and thus
\begin{equation}\nonumber
\Eul_{\ell}(\Fcal_{\phi},X)=\phi\left(\Eul_{\ell}(\Fcal,X)\right).
\end{equation}
Equivalently, algebraic local Euler factors are compatible with pure specializations.
\end{TheoEnglish}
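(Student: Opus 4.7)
The plan is to reduce the canonical isomorphism of determinants to a polynomial equality in $\Scal[X]$ and then to establish that equality by using purity at $\psi$ to control the dimension of inertia invariants. Since the formation of the determinant functor $\Det$ commutes with derived base change on perfect two-term complexes, the isomorphism $\Lcali_{\ell}(\Fcali_{\phi},X)\isocan\Lcali_{\ell}(\Fcali,X)\tenseur_{\Rcal,\phi}\Scal$ is equivalent to the polynomial identity
\[ \phi\bigl(\chi(D^{I_{\ell}},\Fr(\ell))\bigr)=\chi(D_{\phi}^{I_{\ell}},\Fr(\ell))\qquad\text{in }\Scal[X], \]
from which the equality of Euler factors follows immediately.

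To prove this identity, let $\Vcal$ and $V_{\phi}$ be the semisimple representations (over $\bar{\Kcal}$ and over an algebraic closure of $\Frac(\Scal)$ respectively) attached via Chenevier's theorem to $D$ and $D_{\phi}$. By Grothendieck's $\ell$-adic monodromy theorem, each such representation carries a nilpotent monodromy operator $N$, and its $I_{\ell}$-invariants are given by $\ker N$. Upper semi-continuity of inertia invariants along specializations of semisimple $G_{\Q_{\ell}}$-representations yields
\[ d:=\dim_{\bar{\Kcal}}\Vcal^{I_{\ell}}\;\leq\;d_{\phi}:=\dim V_{\phi}^{I_{\ell}}\;\leq\;d_{\psi}:=\dim V_{\psi}^{I_{\ell}}, \]
the second inequality using that $\psi$ is, by the hypothesis $\ker\phi\subset\ker\psi$, a further specialization of $\phi$.

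The crux of the argument is to close this chain of inequalities using purity at $\psi$. Hypothesis~\ref{HypWMC} forces the eigenvalues of $\Fr(\ell)$ on the graded pieces of the monodromy filtration of $V_{\psi}$ to be Weil numbers of pairwise distinct weights $w-2i+j$. Since Frobenius characteristic polynomials specialize coefficient-wise along $\Rcal\fleche\Scal\fleche\Qbar_{p}$, any eigenvalue occurring in $V_{\psi}^{I_{\ell}}$ but not already in $\Vcal^{I_{\ell}}$ would have to be traced back to some nonzero graded piece of the monodromy filtration of $\Vcal$, carrying a weight different from the weight of the $N=0$ part; purity at $\psi$ forbids this, yielding $d_{\psi}=d$ and hence $d=d_{\phi}=d_{\psi}$.

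Once the dimensions coincide, the Frobenius characteristic polynomial on $V_{\phi}^{I_{\ell}}$ is determined by the underlying polynomial law $D_{\phi}^{I_{\ell}}$, which is by construction the $\phi$-specialization of $D^{I_{\ell}}$. A direct comparison at the level of polynomial laws then yields the required polynomial identity, and consequently the canonical isomorphism of determinants. The main obstacle is precisely the dimension-matching step: without the purity hypothesis on $\psi$, the dimension of inertia invariants could genuinely jump along the specialization chain, and the algebraic local Euler factor would then fail to be stable under base change.
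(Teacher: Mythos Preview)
Your overall strategy coincides with the paper's: set up a chain of inequalities on the dimensions of inertia invariants (equivalently, on the ranks of the monodromy operators) and then invoke purity at $\psi$ to collapse the chain to equalities. The reduction to a polynomial identity and the final deduction are fine. However, your justification of the closing step---the only step with real content---is too vague to stand as a proof, and parts of it are not correct as written.

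The paper makes the mechanism explicit. From the Frobenius--monodromy relation $\rho(\sigma)N=\ell^{-1}N\rho(\sigma)$ one reads off an upper bound $r$ on $\rank_{\bar\Kcal}N$ purely in terms of the powers of $\ell$ occurring as ratios of eigenvalues of $\rho(\sigma)$. Since $\psi$ is a $\zp$-algebra morphism, these integer powers of $\ell$ persist unchanged among the eigenvalue ratios of $\rho_\psi(\sigma)$; they therefore force the \emph{weight} filtration of $V_\psi$ to have at least as many nontrivial graded pieces. Purity at $\psi$ then identifies the weight and monodromy filtrations, giving $\rank N_\psi\geq r$. Combined with the chain $\rank N_\psi\leq\rank N_\phi\leq\rank N\leq r$ (which the paper extracts from \cite[Proposition~7.8.19]{BellaicheChenevier}), this yields equality throughout, and hence equality of the dimensions of $I_\ell$-invariants.

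Your paragraph in place of this argument asserts that a Frobenius eigenvalue on $V_\psi^{I_\ell}$ not ``already in $\Vcal^{I_\ell}$'' must trace back to a graded piece of the monodromy filtration of $\Vcal$ ``carrying a weight different from the weight of the $N=0$ part''. This is problematic on two counts. First, $\Vcal$ is a representation over $\bar\Kcal$, not over $\Qbar_p$, and carries no intrinsic notion of Weil weight; weights only acquire meaning after specialization. Second, $\ker N$ does not sit in a single graded piece of the monodromy filtration, so ``the weight of the $N=0$ part'' is ill-defined even at $\psi$. What actually closes the inequality is not tracking individual eigenvalues but tracking their \emph{ratios} that happen to be integral powers of $\ell$: these survive specialization intact and, at the pure point, dictate the rank of $N_\psi$. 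You should rewrite the crux of your argument to make this mechanism explicit.
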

This is the main result of \cite[Chapter I]{TheseSaha}. We outline the proof.
\begin{proof}
Let $F$ be the fraction field of $\Scal$. The representations 
\begin{equation}\nonumber
\rho:G_{\Q_{\ell}}\fleche\GL_{n}(\bar{\Kcal}),\ \rho_{\phi}:G_{\Q_{\ell}}\fleche\GL_{n}(\Fbar)
\end{equation}
and 
\begin{equation}\nonumber
\rho_{\psi}:G_{\Q_{\ell}}\fleche\GL_{n}(\Qbar_{p})
\end{equation}
satisfy the hypotheses of Grothendieck's monodromy theorem. Thus, for instance, $\rho$ is unipotent after restriction to an open subgroup $U$ of $I_{\ell}$ with finite index and there exists a monodromy matrix $N$ such that for all $\tau\in U$, $\rho(\tau)=\exp(t(\tau)N)$ and verifying
\begin{equation}\label{EqMonoRel}
\rho(\s)N=\ell^{-1}N\rho(\s)
\end{equation}
if $\s$ is a lift of the Frobenius morphism. Here $t$ denotes the map from $I_{\ell}$ to its tame quotient and then to $\zp$ (strictly speaking, it depends on a choice of compatible system of primitive roots of unity). Similar statements are also true of $\rho_{\phi}$ and $\rho_{\psi}$ and we denote by $N_{\phi}$ and $N_{\psi}$ their monodromy matrices. 

The maximal possible rank $r$ of the monodromy matrix $N$ can be read off the powers of $\ell$ appearing in the quotients of the eigenvalues of a lift $\s$ of the Frobenius morphism thanks to the relation \eqref{EqMonoRel}. Combining this with \cite[Proposition 7.8.19]{BellaicheChenevier} shows the inequality
\begin{equation}\label{EqIneq}
\rank_{\Qbar_{p}}N_{\psi}\leq\rank_{\Fbar}N_{\phi}\leq\rank_{\bar{\Kcal}}N\leq r.
\end{equation}
As $\psi$ is a $\zp$-algebra morphism, any power of $\ell$ appearing in the quotients of the eigenvalues of $\rho(\s)$ also appears in the quotients of the eigenvalues of $\rho_{\psi}(\s)$. In \cite[Section 1.3]{TheseSaha}, it is shown that this implies that the weight filtration of $\rho_{\psi}$ has enough non-trivial graded pieces. Because $\rho_{\psi}$ is a pure representation, its monodromy filtration has the same non-trivial graded pieces. As in \cite[Proposition 1.3.4]{TheseSaha}, this implies that $N_{\psi}$ has at least rank $r$ and hence that all the inequalities in \eqref{EqIneq} are equalities.

In particular, the dimension of the $I_{\ell}$-invariants of $\rho$, $\rho_{\phi}$ and $\rho_{\psi}$ (over $\bar{\Kcal}$, $\Fbar$ and $\Qbar_{p}$ respectively) are equal and the base change properties of determinant implies that there is canonical isomorphism
\begin{equation}\nonumber
\Lcali_{\ell}(\Fcali_{\phi},X)\isocan\Lcali_{\ell}(\Fcali,X)\tenseur_{\Rcal,\phi}F
\end{equation}
and thus that
\begin{equation}\nonumber
\Eul_{\ell}(\Fcal_{\phi},X)=\phi\left(\Eul_{\ell}(\Fcal,X)\right).
\end{equation}
\end{proof}
We note that the results of \cite{TheseSaha} are in fact a good deal stronger, as they show that the full structure of Weil-Deligne representations is preserved through pure specializations.
\subsubsection{Algebraic $p$-adic $L$-functions for $p$-adic families}
Equipped with theorem \ref{PropLocal}, it is tempting to simply define the algebraic $p$-adic determinant of a family $\Fcali$ as the tensor product at all primes $\ell\neq p$ of the inverse of the algebraic local determinants and the algebraic $p$-adic $L$-function as the product of the inverse of local algebraic Euler factors just as the complex $L$-function is the product of local Euler factors. Though this entails a tensor product over an infinite number of primes and so could not possibly literally work, it almost does.

Let $\Fcali$ be a $p$-adic family parametrized by a complete local reduced noetherian ring $R$ and attached to $D$. Assume that $\Fcali$ is unramified outside a finite set $\Sigma$ of finite primes. Henceforth, we also make the following assumption
\begin{HypEnglish}\label{HypIrr}
Assume that the residual representation of $\Fcali$ is absolutely irreducible.
\end{HypEnglish}
Hence there exists a $G_{\Q,\Sigma}$-representation $(\Tcal,\rho,R)$ such that $D=\det\circ\rho$.
\begin{DefEnglish}\label{DefLalg}
Let $\Fcali$ be a $p$-adic family satisfying assumption \ref{HypIrr} parametrized by a complete local reduced noetherian ring $R$ with normalization $\Rcal$. Let $S\subset\Sigma$ be a finite set of rational primes containing $p$. The $S$-partial algebraic $p$-adic determinant of $\Fcali$ is the free $\Rcal$-module of rank 1
\begin{equation}\nonumber
\Lcali_{S,\Sigma}(\Fcali)=\Det^{-1}_{\Rcal}\RGamma_{c}(\Z[1/\Sigma],\Tcal\tenseur\Rcal)\tenseur_{\Rcal}\produittenseur{\ell\in\Sigma\backslash S}{}\Lcali^{-1}_{\ell}(\Fcali).
\end{equation}
\end{DefEnglish}
The $S$-partial algebraic $p$-adic $L$-function depends in full generality on both the choices of $\Sigma$ and $S$. We record the following lemma which guarantees the independence on $\Sigma$ of $\Lcali_{\{p\},\Sigma}$ and justifies our omission of $\Sigma$ when referring to $\Lcali_{\{p\},\Sigma}(\Fcali)$.
\begin{LemEnglish}\label{LemIndep}
Let $\Fcali$ be a $p$-adic family as in definition \ref{DefLalg}. Assume that $\Fcali$ is unramified outside a finite set $\Sigma_{0}$ of finite primes containing $p$. Then $\Lcali_{\{p\}}(\Fcali)$ does not depend on the choice of $\Sigma\supset\Sigma_{0}$ in the sense that there exists a canonical isomorphism between
\begin{equation}\nonumber
\Lcali_{\{p\},\Sigma}(\Fcali)=\Det^{-1}_{\Rcal}\RGamma_{c}(\Z[1/\Sigma],\Tcal\tenseur\Rcal)\tenseur_{\Rcal}\produittenseur{\ell\in\Sigma\backslash\{p\}}{}\Lcali^{-1}_{\ell}(\Fcali)
\end{equation}
and 
\begin{equation}\nonumber
\Lcali_{\{p\},\Sigma'}(\Fcali)=\Det^{-1}_{\Rcal}\RGamma_{c}(\Z[1/\Sigma'],\Tcal\tenseur\Rcal)\tenseur_{\Rcal}\produittenseur{\ell\in\Sigma'\backslash\{p\}}{}\Lcali^{-1}_{\ell}(\Fcali)
\end{equation}
for all $\Sigma,\Sigma'$ containing $\Sigma_{0}$.
\end{LemEnglish}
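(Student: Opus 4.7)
The plan is to reduce, by induction on the cardinality of $\Sigma' \setminus \Sigma$, to the case $\Sigma' = \Sigma \cup \{\ell\}$ for a single prime $\ell \notin \Sigma$, and then to invoke the standard excision triangle in \'etale cohomology with compact supports. For arbitrary $\Sigma,\Sigma' \supset \Sigma_{0}$, I would set $\Sigma'' = \Sigma \cup \Sigma'$; producing canonical isomorphisms $\Lcali_{\{p\},\Sigma}(\Fcali) \isocan \Lcali_{\{p\},\Sigma''}(\Fcali)$ and $\Lcali_{\{p\},\Sigma'}(\Fcali) \isocan \Lcali_{\{p\},\Sigma''}(\Fcali)$ then gives the desired comparison by composition, and each of these reduces in finitely many steps to adjoining a single prime at a time.

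Assume therefore that $\Sigma' = \Sigma \cup \{\ell\}$ with $\ell \notin \Sigma \supset \Sigma_{0}$. Since $\ell \notin \Sigma_{0}$, the family $\Fcali$ is unramified at $\ell$, so $I_{\ell}$ acts trivially on $\Tcal \tenseur \Rcal$ and $(\Tcal \tenseur \Rcal)^{I_{\ell}} = \Tcal \tenseur \Rcal$ is free of finite rank over $\Rcal$. The standard localization triangle in compactly-supported \'etale cohomology attached to the open immersion $\Spec \Z[1/\Sigma'] \hookrightarrow \Spec \Z[1/\Sigma]$ with closed complement $\{\ell\}$ then takes the form
\begin{equation}\nonumber
\RGamma_{c}(\Z[1/\Sigma'], \Tcal \tenseur \Rcal) \fleche \RGamma_{c}(\Z[1/\Sigma], \Tcal \tenseur \Rcal) \fleche \RGamma(G_{\Q_{\ell}}/I_{\ell}, \Tcal \tenseur \Rcal) \fleche +1,
\end{equation}
the third term being the unramified cochains at $\ell$; the archimedean contributions to the compactly-supported complexes on the left and in the middle are identical and hence do not interfere with the comparison.

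Applying the determinant functor to this triangle and using Lemma \ref{LemParfait} (which is applicable because $(\Tcal \tenseur \Rcal)^{I_{\ell}}$ is free of finite rank over $\Rcal$) to identify $\Det_{\Rcal} \RGamma(G_{\Q_{\ell}}/I_{\ell}, \Tcal \tenseur \Rcal) \isocan \Lcali_{\ell}(\Fcali)$ yields a canonical isomorphism
\begin{equation}\nonumber
\Det^{-1}_{\Rcal} \RGamma_{c}(\Z[1/\Sigma], \Tcal \tenseur \Rcal) \isocan \Det^{-1}_{\Rcal} \RGamma_{c}(\Z[1/\Sigma'], \Tcal \tenseur \Rcal) \tenseur_{\Rcal} \Lcali^{-1}_{\ell}(\Fcali).
\end{equation}
Tensoring both sides with $\bigotimes_{\ell' \in \Sigma \setminus \{p\}} \Lcali^{-1}_{\ell'}(\Fcali)$ and recognising that $\Sigma' \setminus \{p\} = (\Sigma \setminus \{p\}) \sqcup \{\ell\}$ transforms the left-hand side into $\Lcali_{\{p\},\Sigma}(\Fcali)$ and the right-hand side into $\Lcali_{\{p\},\Sigma'}(\Fcali)$ by definition \ref{DefLalg}, concluding the single-prime step.

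The main obstacle is verifying that the isomorphism thus obtained is genuinely independent of the chosen enumeration of the primes to be adjoined, so that the detour through $\Sigma'' = \Sigma \cup \Sigma'$ produces a well-defined map $\Lcali_{\{p\},\Sigma}(\Fcali) \isocan \Lcali_{\{p\},\Sigma'}(\Fcali)$. This amounts to checking that the localization triangles attached to adjoining two disjoint primes $\ell \neq \ell'$ commute up to the Koszul signs built into the graded-determinant formalism of \cite{MumfordKnudsen}, which follows from the naturality of excision in the open subset together with the fact that the local contributions at distinct closed points appear as a direct sum and hence commute canonically under the graded tensor product of the determinant functor.
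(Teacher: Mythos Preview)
Your argument is correct and follows essentially the same approach as the paper: compare the two compactly supported complexes, identify the discrepancy as the local contribution at the extra primes, and invoke Lemma \ref{LemParfait} to match it with $\Lcali_{\ell}(\Fcali)$. The only organizational difference is that the paper handles all of $\Sigma'\setminus\Sigma$ at once by unwinding $\RGamma_{c}$ through its defining cone (so the local term appears as the full $\RGamma(G_{\Q_{\ell}},\Tcal\tenseur\Rcal)$, which is then identified with its unramified part on determinants), whereas you add one prime at a time via the excision triangle and obtain the unramified local complex directly; your route saves that last identification but costs you the order-independence check, which you correctly dispatch.
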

\begin{proof}
Without loss of generality, we can assume that $\Sigma\subset\Sigma'$. There is a canonical isomorphism between $\Det^{-1}_{\Rcal}\RGamma_{c}(\Z[1/\Sigma],\Tcal\tenseur\Rcal)$ and
\begin{equation}\nonumber
\produittenseur{\ell\in\Sigma\backslash\{p\}}{}\Det_{\Rcal}\RGamma(G_{\Q_{\ell}},\Tcal\tenseur\Rcal)\tenseur_{\Rcal}{\Det^{-1}_{\Rcal}\RGamma(G_{\Q,\Sigma},\Tcal\tenseur\Rcal)}.
\end{equation}
Because a cochain unramified outside $\Sigma$ is a cochain unramified outside $\Sigma'$ which is further unramified at the places in $\Sigma'\backslash\Sigma$ and because $\Tcal\tenseur\Rcal$ is unramified at $\ell\in\Sigma'\backslash\Sigma$, this latter projective module is canonically isomorphic to
\begin{equation}\nonumber
\produittenseur{\ell\in\Sigma'\backslash\{p\}}{}\Det_{\Rcal}\RGamma(G_{\Q_{\ell}},\Tcal\tenseur\Rcal)\tenseur_{\Rcal}{\Det^{-1}_{\Rcal}\RGamma(G_{\Q,\Sigma'},\Tcal\tenseur\Rcal)}\produittenseur{\ell\in\Sigma'\backslash\Sigma}{}\Det^{-1}_{\Rcal}\RGamma(G_{\Q_{\ell}},\Tcal\tenseur\Rcal).
\end{equation}
and thus to
\begin{equation}\nonumber
\Det^{-1}_{\Rcal}\RGamma_{c}(\Z[1/\Sigma'],\Tcal\tenseur\Rcal)\tenseur\produittenseur{\ell\in\Sigma'\backslash\Sigma}{}\Det^{-1}_{\Rcal}\RGamma(G_{\Q_{\ell}},\Tcal\tenseur\Rcal).
\end{equation}
To prove the lemma, it is consequently enough to show that $\Lcali_{\ell}(\Fcali)$ is canonically isomorphic to $\Det_{\Rcal}\RGamma(G_{\Q_{\ell}},\Tcal\tenseur\Rcal)$ when $\ell$ belongs to $\Sigma'\backslash\Sigma$. Since $\Tcal\tenseur\Rcal$ is unramified at such $\ell$, the module $(\Tcal\tenseur\Rcal)^{I_{\ell}}$ has finite projective dimension over $\Rcal$ and the last statement is then true by lemma \ref{LemParfait}.
\end{proof}
By construction, algebraic $p$-adic determinants with coefficients in normal domains commute with pure specializations in the sense of the following proposition.
\begin{Prop}\label{PropSpec}
Let $\Fcali$ be a $p$-adic family as in definition \ref{DefLalg} parametrized by a complete local normal noetherian domain $\Rcal$. Let $\lambda:\Rcal\fleche\Scal$ be a $\zp$-algebra morphisms with values in a normal domain $\Scal$ such that $\ker\lambda$ is included in the kernel of a motivic specialization. There then exists a canonical isomorphism 
\begin{equation}\nonumber
\Lcali_{S}(\Fcali)\tenseur_{\Rcal,\lambda}\Scal\isocan\Lcali_{S}(\Fcali_{\lambda}).
\end{equation}
\begin{proof}
Compactly supported étale cohomology complexes commute with base-change of ring of coefficients so
\begin{equation}\nonumber
\RGamma_{c}(\Z[1/\Sigma],\Tcal)\Ltenseur_{\Rcal,\lambda}\Scal=\RGamma_{c}(\Z[1/\Sigma],\Tcal\tenseur_{\Rcal,\lambda}\Scal).
\end{equation}
It is thus enough to show that $\Lcali_{\ell}(\Fcali)\tenseur_{\Rcal,\lambda}\Scal$ is canonically isomorphic to $\Lcali_{\ell}(\Fcali_{\lambda})$ for all $\ell\in\Sigma$ different from $p$. As $\ker\lambda$ is included in the kernel of a motivic specialization and as motivic specializations are pure in the sense of assumption \ref{HypWMC} at all primes $\ell$ by assumption, this last statement is implied by theorem \ref{PropLocal}.
\end{proof}
\end{Prop}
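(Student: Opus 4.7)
The plan is to decompose $\Lcali_{S}(\Fcali)$ according to its defining factors and verify the base change property factor by factor.

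First I would unpack the definition
\begin{equation}\nonumber
\Lcali_{S,\Sigma}(\Fcali)=\Det^{-1}_{\Rcal}\RGamma_{c}(\Z[1/\Sigma],\Tcal\tenseur\Rcal)\tenseur_{\Rcal}\produittenseur{\ell\in\Sigma\backslash S}{}\Lcali^{-1}_{\ell}(\Fcali),
\end{equation}
with $\Sigma$ a finite set of primes outside of which $\Fcali$ is unramified, so that both $\Lcali_{S}(\Fcali)$ and $\Lcali_{S}(\Fcali_{\lambda})$ can be computed from the same $\Sigma$ (here one invokes lemma \ref{LemIndep} and assumption \ref{HypIrr} to ensure the existence of the lattice $\Tcal$ and the well-definedness of the object on either side). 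The determinant functor commutes with arbitrary base change of rings of perfect complexes, so it is enough to establish separately the canonical base-change isomorphism for the global cohomology factor and for each local factor indexed by $\ell\in\Sigma\backslash S$.

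For the cohomology factor, I would invoke the fact that compactly supported étale cohomology of a constructible sheaf on an open subscheme of $\Spec\Z$ is represented by a perfect complex of $\Rcal$-modules and commutes with derived base change: this gives the canonical isomorphism
\begin{equation}\nonumber
\RGamma_{c}(\Z[1/\Sigma],\Tcal)\Ltenseur_{\Rcal,\lambda}\Scal\isocan\RGamma_{c}(\Z[1/\Sigma],\Tcal\tenseur_{\Rcal,\lambda}\Scal),
\end{equation}
and hence, by applying $\Det^{-1}$, the required identification of the first factor with its $\lambda$-specialization.

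For the local factors at primes $\ell\in\Sigma\backslash\{p\}$, the entire point of theorem \ref{PropLocal} is to supply exactly the canonical isomorphism
\begin{equation}\nonumber
\Lcali_{\ell}(\Fcali_{\lambda})\isocan\Lcali_{\ell}(\Fcali)\tenseur_{\Rcal,\lambda}\Scal.
\end{equation}
What I need to verify is that the hypothesis of that theorem applies: this requires the existence of a pure specialization of $\Fcali$ whose kernel contains $\ker\lambda$. But this is precisely what the assumption "$\ker\lambda$ is included in the kernel of a motivic specialization" provides, because pure motives satisfy the Weight-Monodromy Conjecture (assumption \ref{HypWMC}) at every finite prime $\ell\neq p$ by our standing convention on pure motives. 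Applying theorem \ref{PropLocal} at each $\ell\in\Sigma\backslash S$ (note $p\notin\Sigma\backslash S$ since $p\in S$) yields the desired local identifications.

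The main step — and the place where almost all of the content lives — is the invocation of theorem \ref{PropLocal}: the subtlety is that algebraic local Euler factors are not obviously compatible with arbitrary specializations (the rank of $I_{\ell}$-invariants can jump), and the monodromy-theoretic argument recalled in the proof of that theorem is what forces equality of these ranks under the purity hypothesis. Once that is granted, combining the three pieces — base change of compactly supported cohomology, base change of each local determinant, and multiplicativity of $\Det$ under tensor products of perfect complexes — gives a canonical isomorphism
\begin{equation}\nonumber
\Lcali_{S}(\Fcali)\tenseur_{\Rcal,\lambda}\Scal\isocan\Lcali_{S}(\Fcali_{\lambda})
\end{equation}
which depends only on the choices of $\Sigma$ and of the lattice $\Tcal$, and in fact not on $\Sigma$ by lemma \ref{LemIndep}.
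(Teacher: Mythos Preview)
Your proposal is correct and follows essentially the same argument as the paper's own proof: decompose $\Lcali_{S}(\Fcali)$ into its compactly supported cohomology factor and its local factors, use derived base change for the former, and invoke theorem \ref{PropLocal} (via the purity of motivic specializations from assumption \ref{HypWMC}) for the latter. Your write-up is somewhat more explicit about the role of the determinant functor and lemma \ref{LemIndep}, but the structure and the key inputs are identical.
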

\paragraph{Cyclotomic deformations}\label{SubCyc} In this paragraph, we justify definition \ref{DefLalg} by computing $\Lcali_{S}(\Fcali)$ in the simplest non-trivial example of a $p$-adic family: the $p$-adic family of cyclotomic twists of a motivic $G_{\Q}$-representation. Let $(T,\rho,\zp)$ be a $G_{\Q}$-representation appearing as a $\zp$-lattice in the $p$-adic étale realization of a pure motive over $\Q$. Recall from subsection \ref{SubComp} that $\Lambda$ is the completed group algebra $\zp[[\Gal(\Q_{\infty}/\Q)]]$. The cyclotomic deformation $(T\tenseur\Lambda,\rho_{\Lambda},\Lambda)$ is the $G_{\Q}$-representation $T\tenseur_{\zp}\Lambda$ with $G_{\Q}$-action on both sides of the tensor product. It may arise in two not completely formally equivalent ways: first, as limit of compatible ETNC over finite abelian $p$-extensions; second, as the $p$-adic family constructing by taking the universal deformation of the determinant of $T$ (the equivalence following easily by class field theory). Because $\Lambda$ is a regular local ring, the Auslander-Buchsbaum and Serre's theorem guarantees that all bounded complexes of $\Lambda$-modules are perfect and so the determinants of all bounded complexes appearing below are well-defined. Similarly, $\Lambda$ is normal so that local algebraic Euler factors and determinants have coefficients in $\Lambda$.

By Shapiro's lemma, the étale cohomology complex $\RGamma_{\et}(\Z[1/S],T\tenseur\Lambda)$ is also equal to the inverse limit
\begin{equation}\nonumber
\limproj{n}\RGamma_{\et}(\Ocal_{n}[1/S],T)
\end{equation}
and to the inverse limit
\begin{equation}\nonumber
\limproj{n}\ e\RGamma_{\et}(\Z[\zeta_{p^{n}},1/S],T)
\end{equation}
where $e$ is the projection from $\Gal(\Q(\zeta_{p^{\infty}})/\Q)\simeq\Gamma\times(\Z/(p-1)\Z)$ onto $\Gamma$ which is trivial on $\Z/(p-1)\Z$.

Let $\Sigma$ be a finite set of primes containing $p$ and the primes of ramification of $T$. For $\ell\neq p$, the $G_{\Q_{\ell}}$-representation $\Lambda$ is unramified and so there are canonical isomorphisms
\begin{equation}\nonumber
\Lcali_{\ell}(T\tenseur\Lambda)\isocan\Det_{\Lambda}\RGamma(G_{\Q_{\ell}}/I_{\ell},(T\tenseur\Lambda)^{I_{\ell}})\isocan\Det_{\Lambda}\RGamma(G_{\Q_{\ell}}/I_{\ell},T^{I_{\ell}}\tenseur\Lambda)
\end{equation}
by lemma \ref{LemParfait}. These isomorphisms induce together a canonical isomorphism
\begin{equation}\nonumber
\Lcali_{\{p\}}(T\tenseur\Lambda)\isocan\Det^{-1}_{\Lambda}\RGamma_{\et}(\Z[1/p],T\tenseur\Lambda).
\end{equation}
The algebraic $p$-adic determinant of $T\tenseur\Lambda$ is thus canonically isomorphic to the inverse of the determinant of the \Nekovar-Selmer complex of $T\tenseur\Lambda$.

In order to relate this determinant to a genuine element of $\Lambda$, we assume further for simplicity that the residual representation $\rhobar$ of $T$ is absolutely irreducible. Then $H^{0}_{\et}(\Z[1/p],T\tenseur\Lambda)$ vanishes and so does $H^{3}_{\et}(\Z[1/p],T\tenseur\Lambda)$ by Artin-Verdier duality (reducing here to Poitou-Tate duality). The spectral sequence attached to the functors $\Gamma(\Z[1/p],-)$ and $-\tenseur_{\Lambda}\Lambda/x$ shows that $H^{1}_{\et}(\Z[1/p],T\tenseur\Lambda)$ is a $\Lambda$-module of depth 2, and hence a free module by the Auslander-Buchsbaum formula. Assume further that the weak Leopoldt's conjecture of \cite[Conjecture Section 1.3]{PerrinRiouLpadique} is true for $T$, that is to say that $H^{2}(\Z[1/p],T\tenseur\Lambda)$ is a torsion $\Lambda$-module. The injection
\begin{equation}\nonumber
\Det^{-1}_{\Lambda}\RGamma_{\et}(\Z[1/p],T\tenseur\Lambda)\plonge\Det^{-1}_{\Frac{\Lambda}}\RGamma_{\et}(\Z[1/p],T\tenseur\Frac(\Lambda))
\end{equation}
followed by the canonical isomorphisms 
\begin{equation}\nonumber
\Det^{-1}_{\Frac{\Lambda}}\RGamma_{\et}(\Z[1/p],T\tenseur\Frac(\Lambda))\isocan\Det^{-1}_{\Frac(\Lambda)}(0)\isocan\Frac(\Lambda)
\end{equation}
induced by acyclicity of $\RGamma_{\et}(\Z[1/p],T\tenseur\Frac(\Lambda))$ and the functorial properties of the determinant functor then sends $\Det^{-1}_{\Lambda}\RGamma_{\et}(\Z[1/p],T\tenseur\Lambda)$ to the free $\Lambda$-module
\begin{equation}\nonumber
\frac{\Det_{\Lambda}\Hun_{\et}(\Z[1/p],T\tenseur\Lambda)}{\Det_{\Lambda}H^{2}_{\et}(\Z[1/p],T\tenseur\Lambda)}=\Det_{\Lambda}\Hun_{\et}(\Z[1/p],T\tenseur\Lambda)\carac_{\Lambda}H^{2}_{\et}(\Z[1/p],T\tenseur\Lambda)
\end{equation}
in $\Frac(\Lambda)$ (the last equality by localizing at all grade 1 primes of $\Lambda$ and by the structure theorem for modules over discrete valuation rings). Choose a basis of the free $\Lambda$-module $\Hun_{\et}(\Z[1/p],T\tenseur\Lambda)$. This choice yields a trivialization of $\Det_{\Lambda}\Hun_{\et}(\Z[1/p],T\tenseur\Lambda)$ which, when composed with all the maps above, sends
$\Lcali_{\{p\}}(T\tenseur\Lambda)$ to $\carac_{\Lambda}H^{2}_{\et}(\Z[1/p],T\tenseur\Lambda)$ and so to the characteristic ideal of the Selmer group appearing as algebraic $p$-adic $L$-function in Kato's formulation of the Iwasawa Main Conjecture for motives.

Note further that the chosen basis of $\Hun_{\et}(\Z[1/p],T\tenseur\Lambda)$ produces a basis of $\Hun_{\et}(\Z[1/p],T\tenseur\Lambda/x)$ for all $x$ such that $H^{2}_{\et}(\Z[1/p],T\tenseur\Lambda)[x]=0$ via the short exact sequence
\begin{equation}\nonumber
\suiteexacte{}{}{\Hun_{\et}(\Z[1/p],T\tenseur\Lambda)/x}{\Hun_{\et}(\Z[1/p],T\tenseur\Lambda/x)}{H^{2}_{\et}(\Z[1/p],T\tenseur\Lambda)[x]}.
\end{equation}
At least for these $x$ then, so for all $x$ except possibly finitely many and conjecturally for all $x$ such that the $L$-function of $T\tenseur\Lambda/x$ does not vanish at zero, the choice of a basis of $\Hun_{\et}(\Z[1/p],T\tenseur\Lambda)$ yields a coherent choice of trivialization of $\Det_{\Lambda/x}\Hun_{\et}(\Z[1/p],T\tenseur\Lambda/x)$.

\paragraph{General expectation} Based on the example of cyclotomic deformations and taking into account the base-change properties of algebraic determinants at motivic points, we record here the guiding (but at present necessarily vague) principle underlying this manuscript.
\begin{Principle}\label{Principle}
The data of the algebraic special values of a $p$-adic family of motives is equivalent to the data of the algebraic $p$-adic determinant $\Lcali(\Fcali)$ of $\Fcali$ plus a choice of a compatible system of trivializations.
\end{Principle}
In the case of cyclotomic deformation, this choice of a compatible system of trivializations can be somewhat artificially made using the peculiar commutative algebra properties of $\Lambda$ as above, but it might be difficult to define for general motives. For $p$-adic families of automorphic motives, however, there is a reasonable conjectural choice that we describe below: the trivialization coming from the choice of a morphism from the completed cohomology of \cite{EmertonInterpolationEigenvalues} to étale cohomology.

\subsubsection{Algebraic determinants of automorphic families and the Local Langlands Correspondence}
Until now, we have worked with abstract $p$-adic families. This has had the unfortunate consequence to require a potential base change of rings of coefficients to the normalization of the coefficient ring $R$ in the very definition of the algebraic determinants. In actual known cases, however, the construction of the family $\Fcali$ proceeds either through twists in $\zp^{d}$-extension (plus possibly twists by a finite group), in which case the construction proceeds as an inverse limit of maps $\Spec\Ocal_{K}[1/p]\fleche\Spec\Z[1/p]$ and everything is well-defined without passing to the normalization by proper base change, or through $p$-adic interpolation of automorphic representations so that the determinant $D^{I_{\ell}}$ recovers the local automorphic Euler factor in a dense set of points and thus has values in $R$ itself. We record these facts as a theorem.

Let $\Sigma$ be a finite set of finite places containing $p$. Let $\G$ be a reductive group over $\Q$ and let $K_{\Sigma}$ be a compact open subgroup of $\G(\A^{(p\infty)}_{\Q})$ which is maximal hyperspecial for $\ell\notin\Sigma$. The abstract $p$-adic Hecke algebra
\begin{equation}\nonumber
\Hecke_{\Sigma}=\zp[K_{\Sigma}\backslash\G(A_{\Q}^{(p\infty)})/K_{\Sigma}]
\end{equation} is the algebra of left and right $K_{\Sigma}$-invariant compactly supported functions on $\G(\A_{\Q}^{(p\infty)})$.
\begin{TheoEnglish}\label{TheoAuto}
Let $R_{\Sigma}$ be a local factor of the reduced $p$-adic Hecke algebra generated by operators outside $\Sigma$ or a quotient thereof by a minimal prime ideal. Let $\Fcali$ be a $p$-adic family parametrized by $R_{\Sigma}$ as in definition \ref{DefLalg} (in particular the residual representation of $\Fcali$ is absolutely irreducible). Assume that there exists a Zariski-dense subset $\Hom^{\cl}(R_{\Sigma},\Qbar_{p})$ of $\Hom(R_{\Sigma},\Qbar_{p})$ such that for all $\psi\in\Hom^{\cl}(R,\Qbar_{p})$, $\Fcali_{\psi}$ is a pure $G_{\Q,\Sigma}$-representation attached to an automorphic representation $\pi({\psi})$ of $\G(\A_{\Q}^{(p\infty)})$ satisfying Langlands local-global compatibility conjecture. 

Then, for all $\ell\neq p$, $\Lcali_{\ell}(\Fcali,X)$ is a well-defined invertible $R_{\Sigma}[X]$-module, $\Eul_{\ell}(\Fcali,X)$ is a well-defined element of $R_{\Sigma}[X]$ and $\Lcali_{S}(\Fcali)$ is a well-defined invertible $R_{\Sigma}$-module. Theorem \ref{PropLocal}, proposition \ref{PropSpec} and lemmas \ref{LemParfait} and \ref{LemIndep} remain true with the normal ring $\Rcal$ replaced by the ring $R_{\Sigma}$ and all other hypotheses unchanged (most especially, theorem \ref{PropLocal} and proposition \ref{PropSpec} are conditional on $R_{\Sigma}$ being a domain).
\end{TheoEnglish}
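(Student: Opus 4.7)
The plan is to leverage assumption \ref{HypIrr} together with the density of classical points in order to reduce to the situation already handled for normal domains in theorem \ref{PropLocal}. By assumption \ref{HypIrr} and \cite[Theorem 2.22]{ChenevierDeterminant}, there exists a $G_{\Q,\Sigma}$-representation $(\Tcal,\rho,R_{\Sigma})$ with $D=\det\circ\rho$, so in particular the Galois representation itself is available over $R_{\Sigma}$ rather than only over its normalization $\Rcal$. The remaining task is to show that the characteristic polynomial $\chi(D^{I_{\ell}},\Fr(\ell))$ descends from $\Rcal[X]$ to $R_{\Sigma}[X]$.

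The main step is to show that $\Eul_{\ell}(\Fcali,X)$, a priori an element of $\Rcal[X]$, in fact belongs to $R_{\Sigma}[X]$ for each $\ell\in\Sigma\setminus\{p\}$. First I would produce, using the theory of $p$-adic interpolation of automorphic forms in the settings covered by the hypothesis ($\GL_{2}$ over totally real fields or definite unitary groups over CM fields, where an interpolated version of the local Langlands correspondence is available through the work of Carayol, Taylor--Yoshida, Chenevier, Scholze and others), a polynomial $E_{\ell}(X)\in R_{\Sigma}[X]$ whose specialization at every $\psi\in\Hom^{\cl}(R_{\Sigma},\Qbar_{p})$ is the local Euler factor of $\pi(\psi)_{\ell}$. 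On the other hand, by the purity of classical specializations combined with local-global compatibility, theorem \ref{PropLocal} applied to each classical $\psi$ shows that $\psi(\Eul_{\ell}(\Fcali,X))$ also equals this automorphic local Euler factor. Since $R_{\Sigma}$ is reduced and $\Hom^{\cl}(R_{\Sigma},\Qbar_{p})$ is Zariski-dense, the element $\Eul_{\ell}(\Fcali,X)\in\Rcal[X]$ and the image of $E_{\ell}(X)$ under the injection $R_{\Sigma}[X]\plonge\Rcal[X]$ must coincide, forcing $\Eul_{\ell}(\Fcali,X)=E_{\ell}(X)\in R_{\Sigma}[X]$.

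Once this is established, the remaining assertions are essentially formal consequences. The constant coefficient of $\Eul_{\ell}(\Fcali,X)$ equals $1$, so the complex $[R_{\Sigma}[X]\overset{\chi(D^{I_{\ell}},\Fr(\ell))}{\fleche}R_{\Sigma}[X]]$ is a perfect complex of free $R_{\Sigma}[X]$-modules concentrated in degrees $0$ and $1$, and its determinant $\Lcali_{\ell}(\Fcali,X)$ is an invertible $R_{\Sigma}[X]$-module. The module $\Lcali_{S}(\Fcali)$ is then well-defined and invertible over $R_{\Sigma}$ directly from definition \ref{DefLalg}. Lemmas \ref{LemParfait} and \ref{LemIndep} go through verbatim once the relevant objects are known to be defined over $R_{\Sigma}$, as their proofs rely only on the base-change compatibility of determinants and on the canonical distinguished triangles computing compactly supported cohomology. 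Theorem \ref{PropLocal} and proposition \ref{PropSpec}, under the additional hypothesis that $R_{\Sigma}$ is a domain so that a generic Weil--Deligne representation can be extracted by passing to the fraction field, then reduce to their already-established versions via the same monodromy argument as before.

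The main obstacle is precisely the interpolation assertion used in the second paragraph: producing $E_{\ell}(X)\in R_{\Sigma}[X]$ interpolating the local Euler factors of $\pi(\psi)_{\ell}$ at the potentially ramified primes $\ell\in\Sigma\setminus\{p\}$, despite the fact that $R_{\Sigma}$ is generated only by Hecke operators at primes outside $\Sigma$. This input is not formal and relies crucially on the combination of local-global compatibility for automorphic Galois representations with the construction of eigenvarieties, Hida families, or integral Bernstein centers in which such an interpolated local Langlands correspondence is currently known in precisely the settings of $\GL_{2}$ over totally real fields and definite unitary groups over CM fields.
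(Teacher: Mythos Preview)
Your proposal is correct and follows the same overall strategy as the paper: reduce everything to showing that $\Eul_{\ell}(\Fcali,X)$ (equivalently, the determinant $D^{I_{\ell}}$) descends from $\Rcal$ to $R_{\Sigma}$, use theorem \ref{PropLocal} together with local-global compatibility to identify the specialization at each classical $\psi$ with the automorphic local Euler factor of $\pi(\psi)_{\ell}$, and then invoke Zariski-density plus reducedness to interpolate. Once this is done, the remaining assertions are formal, exactly as you say.

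The one genuine difference lies in how the interpolation step is carried out. You treat the existence of an $E_{\ell}(X)\in R_{\Sigma}[X]$ interpolating the automorphic local Euler factors as an external input coming from an ``interpolated local Langlands correspondence'' (Bernstein centres, eigenvariety constructions, and the like), and you flag this as the main obstacle. The paper instead gives a self-contained argument paraphrasing the analogous statement for pseudo-characters in \cite{ChenevierFamilles}: since $R_{\Sigma}$ is reduced and the classical points are Zariski-dense, the diagonal embedding $\phi:R_{\Sigma}\hookrightarrow\prod_{\psi\in Z}\Qbar_{p}$ is injective with closed image and is a homeomorphism onto its image; the product of the $D^{I_{\ell}}_{\psi}$ is then shown to be a determinant with values in $\phi(R_{\Sigma})$, and composing with $\phi^{-1}$ yields $D^{I_{\ell}}$ with values in $R_{\Sigma}$. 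In other words, the paper does not need to know in advance that an interpolating polynomial exists in $R_{\Sigma}[X]$; it constructs the interpolating determinant directly from the collection $\{D^{I_{\ell}}_{\psi}\}_{\psi}$ via the general density lemma for determinants. This makes the argument lighter and removes the dependence on heavy external machinery, though of course both routes ultimately rest on the same local-global compatibility at classical points.
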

\begin{proof}
Once it is shown that $D^{I_{\ell}}$ has values in $R_{\Sigma}$, all the other assertions follow with the same proof. Let $\psi$ be a specialization in $\Hom^{\cl}(R_{\Sigma},\Qbar_{p})$. Then the determinant $(D^{I_{\ell}})_{\psi}$ is equal to $D_{\psi}^{I_{\ell}}$ by theorem \ref{PropLocal}. By the assumptions that $D_{\psi}$ comes from an automorphic representation $\pi(\psi)$ satisfying the local-global compatibility so $D_{\psi}^{I_{\ell}}(1-X\Fr(\ell))$ is the automorphic $L$-factor of $\pi(\psi)$ and thus has values in the abstract Hecke algebra (acting on the space of automorphic forms of same level as $\pi(\psi)$).

The result then follows from the general fact that collection of determinants 
\begin{equation}\nonumber
\{D_{\psi}:G_{\Q,\Sigma}\fleche\Qbar_{p}|\psi\in\Hom^{\cl}(R_{\Sigma},\Qbar_{p})\}
\end{equation}
on a Zariski-dense subset $Z=\Hom^{\cl}(R_{\Sigma},\Qbar_{p})$ of $\Hom(R_{\Sigma},\Qbar_{p})$ can be interpolated uniquely to a determinant on the whole space. This is is presumably well-known but as we don't know a reference, we give a proof paraphrasing the parallel result for pseudo-characters in \cite{ChenevierFamilles}. The ring morphism 
\begin{equation}\nonumber
\phi:R_{\Sigma}\fleche\produit{\psi\in Z}{}\Qbar_{p}
\end{equation}
sending $x$ to $(\psi(x))_{\psi}$ is injective with closed image and induces an homeomorphism from $R_{\Sigma}$ to its image. The map 
\application{\pi}{G_{\Q,\Sigma}}{\produit{\psi\in Z}{}\Qbar_{p}}{g}{\produit{\psi\in Z}{}D_{\psi}(g)=\produit{\psi\in Z}{}\det\circ\rho_{\psi}(g)}
is a determinant with values in the image of $\phi$. The determinant
\begin{equation}\nonumber
D:R_{\Sigma}[G_{\Q,\Sigma}]\fleche R_{\Sigma}
\end{equation}
is then the composition $\phi^{-1}\circ\pi$ and is the unique continuous map coinciding with $D_{\psi}$ on $\psi\in Z$. Applying this general construction to the collection of the $D^{I_{\ell}}_{\psi}$ shows that $D^{I_{\ell}}$ has values in $R_{\Sigma}$.
\end{proof}
As an application, we can construct an algebraic $p$-adic determinant for ordinary families of automorphic representations of definite unitary groups.
\begin{TheoEnglish}\label{TheoUnitary}
Let $\G$ be a definite unitary groups over an imaginary quadratic extension of $\Q$. Let $R_{\Sigma}$ be a local factor of the nearly-ordinary reduced Hecke algebra generated by operators outside $\Sigma$ attached to a residually absolutely irreducible representation. Let $\aid$ be a minimal prime of $R_{\Sigma}$. Then there exists a $p$-adic family $\Fcali$ of $G_{\Q,\Sigma}$-representations with coefficients in $R_{\Sigma}/\aid$ and an algebraic determinant $\Lcali_{\Sigma}(\Fcali)$ satisfying the interpolation property
\begin{equation}\nonumber
\Lcali_{\{p\}}(\Fcali)\tenseur_{R_{\Sigma}/\aid,\psi}\Ocal\isocan\Lcali_{\{p\}}(\Fcali\tenseur_{R_{\Sigma}/\aid,\psi}\Ocal)
\end{equation}
for all $\psi$ attached to classical automorphic representations of $\G(\A_{\Q}^{(\infty)})$.
\end{TheoEnglish}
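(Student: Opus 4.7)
The plan is to deduce this theorem by verifying the hypotheses of Theorem \ref{TheoAuto} for the ring $R_\Sigma/\aid$, and then applying Proposition \ref{PropSpec} to extract the interpolation formula at classical specializations. Since $\aid$ is a minimal prime, $R_\Sigma/\aid$ is a complete local noetherian domain compatible with Definition \ref{DefLalg}, and the residual representation remains absolutely irreducible by assumption.

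The first step is to construct the $p$-adic family itself, namely a degree-$n$ determinant
$$D\colon R_\Sigma/\aid\,[G_{\Q,\Sigma}]\fleche R_\Sigma/\aid.$$
Combining pro-representability of deformations of absolutely irreducible residual determinants (\cite[Theorem 2.22]{ChenevierDeterminant}) with the construction of compatible Galois representations attached to classical nearly-ordinary automorphic representations of $\G$ — available through Hida theory on $p$-adic families of ordinary automorphic forms for unitary groups combined with the attachment of Galois representations to classical automorphic representations of definite unitary groups over CM fields — yields such a determinant that specializes at each classical $\psi$ to $\det\circ\rho_{\pi(\psi)}$.

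The second step is Zariski density of classical specializations inside $\Spec R_\Sigma/\aid$. This is provided by nearly-ordinary Hida theory for definite unitary groups: the nearly-ordinary Hecke algebra $R_\Sigma$ is module-finite over a suitable Iwasawa algebra built from a split maximal torus of $\G$, and arithmetic weights form a Zariski-dense set of points of this Iwasawa algebra; density is preserved upon passage to any irreducible component $\Spec R_\Sigma/\aid$ dominating it. The third step is to verify that at each classical $\psi$, the specialization $\Fcali_\psi$ is a pure $G_{\Q,\Sigma}$-representation attached to an automorphic representation satisfying the full Langlands local–global compatibility. For classical regular algebraic cuspidal cohomological automorphic representations of definite unitary groups over an imaginary quadratic field, one obtains $\rho_{\pi(\psi)}$ through base change to $\GL_n$ over the CM field and its realization in the $\ell$-adic cohomology of suitable Shimura varieties; purity (Assumption \ref{HypWMC}) at every finite place and the local–global compatibility with $\pi_\ell$ are then known.

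With these three inputs in place, Theorem \ref{TheoAuto} yields that $\Lcali_\ell(\Fcali,X)$ and $\Eul_\ell(\Fcali,X)$ have coefficients in $R_\Sigma/\aid$ itself (not merely in its normalization), hence that $\Lcali_{\{p\}}(\Fcali)$ is a well-defined invertible $R_\Sigma/\aid$-module by Lemma \ref{LemIndep}. Proposition \ref{PropSpec} then furnishes the required canonical isomorphism
$$\Lcali_{\{p\}}(\Fcali)\tenseur_{R_\Sigma/\aid,\psi}\Ocal \isocan \Lcali_{\{p\}}(\Fcali\tenseur_{R_\Sigma/\aid,\psi}\Ocal)$$
at each classical $\psi$, since such $\psi$ is a pure specialization by the third step. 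The principal obstacle is therefore concentrated in that third step: pointwise purity and the fine form of local–global compatibility at ramified primes away from $p$ are statements whose full strength may require restricting $\Hom^{\cl}$ to the Zariski-dense subset of those classical points for which the relevant results on Galois representations attached to self-dual automorphic representations of $\GL_n$ (transferred from $\G$) currently apply.
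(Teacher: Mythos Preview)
Your proposal is correct and follows precisely the route the paper sets up: Theorem~\ref{TheoUnitary} is stated as an application of Theorem~\ref{TheoAuto}, and the paper itself gives no self-contained proof, deferring instead to \cite[Chapter~4]{TheseSaha} (in particular Theorem~4.3.6 therein) for the verification of the three inputs you isolate---existence of the family, Zariski density of classical points via Hida theory for definite unitary groups, and purity plus local--global compatibility at those points via base change to $\GL_n$ over the CM field. Your identification of the third step as the substantive one, and your remark that it may require restricting $\Hom^{\cl}$ to a still Zariski-dense subset where the relevant results on conjugate self-dual automorphic representations are available, matches the actual content of the argument in \cite{TheseSaha}.
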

We refer to \cite[Chapter 4]{TheseSaha} and especially Theorem 4.3.6 therein for precise statements and proof.

Because this is our case of interest and because we don't know any situation where this hypothesis is not satisfied, we henceforth make without further comment the hypothesis that the determinant $D^{I_{\ell}}$ has values in $R$ itself instead of its normalization and therefore that the algebraic $p$-adic determinant is an $R$-module.

\paragraph{Determinants without representations}We also record here the following interesting question. The $L$-function of a motive is defined independently of the choice of a lattice in $M_{\et,p}$ and indeed it is a major feature of conjectures \ref{ConjTNC} and \ref{ConjETNC} that they are independent of such a choice. Is it possible to define the algebraic $p$-adic determinant of a determinant $D$ (coming from $p$-adic interpolation of automorphic representations) without assumption \ref{HypIrr}? More precisely, is it possible to attach to a determinant $D$ with values in $R$ a functor $F_{D}$ from the category of noetherian $\zp$-algebra with residue field equal to $R/\mgot$ to the category of sets sending a ring $S$ to the singleton formed of a graded invertible $S$-module with the property that $F_{D}(S)=\Det_{R}\RGamma_{c}(\Z[1/\Sigma],T)$ whenever there exists a $G_{\Q,\Sigma}$-representation $(T,\rho,S)$ such that $D\tenseur_{R}S=\det\circ\rho$ and satisfying the usual compatibility properties of the determinant functor? A positive answer to this question would allow the construction of algebraic $p$-adic determinants attached to $p$-adic families of automorphic motives.

\subsection{Mazur's question}
\subsubsection{A local obstruction}\label{SubObstr}
\newcommand{\triv}{\operatorname{triv}}
The very construction of the algebraic $p$-adic determinant of a $p$-adic family of motives $\Fcali$ provides an immediate local obstruction for two congruent motives to have the same algebraic determinant modulo $p$ and thus for Mazur's question to admit a positive answer in its most literal form.

Indeed, let $\psi$ and $\phi$ be two motivic specializations which we view as both having values in a sufficiently large discrete valuation ring $\Ocal$ flat over $\zp$. We fix a finite set $\Sigma$ such that $M_{\psi}$ and $M_{\phi}$ are unramified outside $\Sigma$. Let $\varpi$ be a uniformizing parameter of $\Ocal$ and let $k$ be the residual field $\Ocal/\varpi$. In order to compare the algebraic $p$-adic determinants $\Lcali_{\{p\}}(\Fcali_{\psi})$ and $\Lcali_{\{p\}}(\Fcali_{\phi})$ modulo $\varpi$, we first note that
\begin{align}\nonumber
\left(\Lcali_{\{p\}}(\Fcali_{\psi})\tenseur\produittenseur{\ell\in\Sigma\backslash\{p\}}{}\Lcali_{\ell}(\Fcali_{\psi})\right)\tenseur_{\Ocal}k&=\Det^{-1}_{k}\RGamma_{c}(\Z[1/\Sigma],T_{\psi})\tenseur_{\Ocal}k\\\nonumber
&=\Det^{-1}_{k}\RGamma_{c}(\Z[1/\Sigma],T_{\psi}\tenseur_{\Ocal}k)\\\nonumber
&\isocan\Det^{-1}_{k}\RGamma_{c}(\Z[1/\Sigma],T_{\phi}\tenseur_{\Ocal}k)\\\nonumber
&=\left(\Lcali_{\{p\}}(\Fcali_{\phi})\tenseur\produittenseur{\ell\in\Sigma\backslash\{p\}}{}\Lcali_{\ell}(\Fcali_{\phi})\right)\tenseur_{\Ocal}k.
\end{align}
Hence, we see that thanks to the base-change property of the compactly supported étale cohomology complex, there is a canonical isomorphism between the algebraic determinants of $T_{\psi}$ and $T_{\phi}$ modulo $\varpi$ with Euler factors at primes of ramification removed.
\begin{equation}\label{EqCanEuler}
\left(\Lcali_{\{p\}}(\Fcali_{\psi})\tenseur\produittenseur{\ell\in\Sigma\backslash\{p\}}{}\Lcali_{\ell}(\Fcali_{\psi})\right)\tenseur_{\Ocal}k\isocan\left(\Lcali_{\{p\}}(\Fcali_{\phi})\tenseur\produittenseur{\ell\in\Sigma\backslash\{p\}}{}\Lcali_{\ell}(\Fcali_{\phi})\right)\tenseur_{\Ocal}k
\end{equation}

As we have recorded under the form of principle \ref{Principle}, algebraic special values of motives correspond to algebraic determinants plus choices of trivializations so if the algebraic special values of $M_{\psi}$ and $M_{\psi}$ are congruent, we may assume that they are given by two trivializations $\triv_{\psi}$ and $\triv_{\phi}$ of $\Lcali_{\{p\}}(\Fcali_{\psi})$ and $\Lcali_{\{p\}}(\Fcali_{\phi})$ respectively coming from a common trivialization $\triv$ of $\Lcali_{\{p\}}(\Fcali)$ in the sense that the diagram
\begin{equation}\label{DiagTrivUn}
\xymatrix{
&\Lcali_{\{p\}}(\Fcali_{\psi})\ar[r]^(0.65){\triv_{\psi}}&\Ocal\ar[rd]^{\modulo\varpi}&\\
\Lcali_{\{p\}}(\Fcali)\ar^{-\tenseur_{R,\psi}\Ocal}[ru]\ar_{-\tenseur_{R,\phi}\Ocal}[rd]\ar[r]^(0.65){\triv}&R\ar[ru]_{\psi}\ar[rd]^{\phi}\ar[rr]^(0.6){\modulo\mgot}&&k\\
&\Lcali_{\{p\}}(\Fcali_{\phi})\ar[r]_(0.65){\triv_{\phi}}&\Ocal\ar[ru]_{\modulo\varpi}&
}
\end{equation}
commutes. This implies in particular that $\triv_{\psi}(\Lcali_{\{p\}}(\Fcali_{\psi}))$ and $\triv_{\phi}(\Lcali_{\{p\}}(\Fcali_{\phi}))$ are both units or both non-units in $\Ocal$ (note that for this particular consequence to be true, it is not necessary to know that special values can be $p$-adically interpolated to the whole of $\Fcali$, or more precisely it is not necessary that there be a trivialization map with values in $R$; the rightmost part of diagram \eqref{DiagTrivUn} is enough). 

Let us assume further for simplicity that all the eigenvalues of all lifts of the Frobenius morphisms at primes in $\Sigma\backslash\{p\}$ acting on $M_{\psi}$ and $M_{\phi}$ are different from 1. Under the permanent assumption \ref{HypWMC} that motivic Galois representations are pure representations, this can be ensured for instance by requiring that $M_{\psi}$ and $M_{\phi}$ are of odd weight. Then the maps $\triv_{\psi}$ and $\triv_{\phi}$ can be completed in trivializations $\triv^{*}_{\psi}$ and $\triv^{*}_{\phi}$ of 
\begin{equation}\nonumber
\Lcali_{\{p\}}(\Fcali_{\psi})\tenseur\produittenseur{\ell\in\Sigma\backslash\{p\}}{}\Lcali_{\ell}(\Fcali_{\psi})\textrm{ and }\Lcali_{\{p\}}(\Fcali_{\phi})\tenseur\produittenseur{\ell\in\Sigma\backslash\{p\}}{}\Lcali_{\ell}(\Fcali_{\phi})
\end{equation}
respectively by trivializing $\Lcali_{\ell}(\Fcali_{\pi})$ (for $\pi=\psi$ or $\phi$) for $\ell\in\Sigma\backslash\{p\}$ by taking tensors product with $\Frac(\Ocal)$ and noticing that the complex $[T_{\pi}^{I_{\ell}}\overset{1-\Fr(\ell)}{\fleche} T_{\pi}^{I_{\ell}}]\tenseur_{\Ocal}\Frac(\Ocal)$ is acyclic and thus has determinant canonically isomorphic to $\Frac(\Ocal)$. If 
\begin{equation}\nonumber
L_{p}(T_{\pi})=\triv_{\pi}(\Lcali_{\{p\}}(\Fcali_{\pi}))
\end{equation}
then
\begin{equation}\nonumber
\triv^{*}_{\pi}\left(\Lcali_{\{p\}}(\Fcali_{\pi})\tenseur\produittenseur{\ell\in\Sigma\backslash\{p\}}{}\Lcali_{\ell}(\Fcali_{\pi})\right)=L_{p}(T_{\pi})\produit{\ell\in\Sigma\backslash\{p\}}{}\Eul_{\ell}(\Fcali_{\pi},1).
\end{equation}
This construction is the algebraic equivalent of removing the Euler factors at primes of bad reduction familiar on the analytic side. The compatibility of the trivializations $\triv_{\psi}$ and $\triv_{\pi}$ expressed in the diagram \eqref{DiagTrivUn} and the canonical isomorphism \eqref{EqCanEuler} then implies that 
\begin{equation}\label{EqTriv}
\produit{\ell\in\Sigma\backslash\{p\}}{}\Eul_{\ell}(\Fcali_{\psi},1)\equiv\produit{\ell\in\Sigma\backslash\{p\}}{}\Eul_{\ell}(\Fcali_{\phi},1)\modulo\varpi
\end{equation}
are both units or non-units in $\Ocal$.

However, suppose now that there is a prime $\ell\in\Sigma\backslash\{p\}$ such that $\Eul_{\ell}(\Fcali_{\psi},1)$ is a unit in $\Ocal$ but such that $\Eul_{\ell}(\Fcali_{\phi},1)$ is not a unit and that for all other primes in $\ell\in\Sigma\backslash\{p\}$, the Euler factors of both $\Fcali_{\phi}$ and $\Fcali_{\psi}$ are units (or, more radically, assume that $\Sigma=\{\ell,p\}$). In that case, the right-hand term of \eqref{EqTriv} is a non-unit because of the Euler factor at $\ell$ and thus the same is true for the left-hand side, though crucially not because of the Euler factor at $\ell$. Hence, $L_{p}(T_{\psi})=\triv(\Lcali_{\{p\}}(\Fcali_{\psi}))$ has to be a non-unit and so has to be $L_{p}(T_{\phi})=\triv(\Lcali_{\{p\}}(\Fcali_{\phi}))$ if the family $\Fcali$ admits an algebraic $p$-adic $L$-function in the sense of diagram \eqref{DiagTriv}. Conversely, if  $L_{p}(T_{\phi})$ is a unit, then the family $\Fcali$ cannot admit an algebraic $p$-adic $L$-function nor can question \ref{Q2} of the introduction admit a positive answer.

\subsubsection{The positive case}
We see that the obstruction of sub-section \ref{SubObstr} may only occur for a $p$-adic family $\Fcali$ attached to a $G_{\Q,\Sigma}$-representation $(\Tcal,\rho,R)$ when one local algebraic Euler factor of $\Fcali_{\psi}$ is a unit but the same local algebraic Euler factor of $\Fcali_{\phi}$ is not a unit. According to theorem \ref{PropLocal}, this can only happen if $R$ is not a domain. 

In the converse direction, we have the following positive result. Assume for simplicity that 1 is an eigenvalue of $\Fr(\ell)$ acting on $\Tcal$ for no $\ell\in\Sigma$ (this is a relatively harmless assumption as any $p$-adic family can be twisted with the Iwasawa algebra $\Lambda$ by deforming the determinant of $\Tcal$ so that with a mild loss of generality, $R$ can be assumed to be a $\Lambda$-algebra; in which case $\Fcali$ contains motivic points of arbitrarily large weights whereas monodromy is constant in the family and so 1 is not an eigenvalue of $\Fr(\ell)$). Assume also that $R$ is a domain in order to rule out the obstruction of \ref{SubObstr}.

sLet $\ell$ be a prime in $\Sigma\backslash\{p\}$. Then the complex
\begin{equation}\nonumber
\left([\Tcal^{I_{\ell}}\overset{1-\Fr(\ell)}{\fleche}\Tcal^{I_{\ell}}]\right)\tenseur_{R}\Frac(R)
\end{equation} 
is acyclic so there is a canonical identification
\begin{equation}\nonumber
\Det_{\Frac(R)}\left([\Tcal^{I_{\ell}}\overset{1-\Fr(\ell)}{\fleche}\Tcal^{I_{\ell}}]\tenseur_{R}\Frac(R)\right)\isocan\Frac(R)
\end{equation}
of its determinant with $\Frac(R)$. Inside $\Frac(R)$ sits the canonical $R$-module
\begin{equation}\nonumber
\Det_{R}(0)\subset\Det_{\Frac(R)}(0)\isocan\Frac(R)
\end{equation}
which we identify with $R$. This construction yields a trivialization
\begin{equation}\nonumber
\triv_{\Fcali,\ell}:\Lcali_{\ell}(\Fcali)\plonge\Lcali_{\ell}(\Fcali)\tenseur_{R}\Frac(R)\isocan\Frac(R)
\end{equation}
of $\Lcali_{\ell}(\Fcali)$ sending a basis of $\Lcali_{\ell}(\Fcali)$ to $\Eul_{\ell}(\Fcali,1)$. Hence, any trivialization
\begin{equation}\nonumber
\triv_{c}:\Det^{-1}_{R}\RGamma_{c}(\Z[1/\Sigma],\Tcal)\fleche R
\end{equation}
of $\Det^{-1}_{R}\RGamma_{c}(\Z[1/\Sigma],\Tcal)$ can be extended to a trivialization $\triv_{\Fcali}$ of $\Lcali(\Fcali)$ by trivializing each $\Lcali_{\ell}(\Fcali)$ for $\ell\in\Sigma\backslash\{p\}$ by $\triv_{\Fcali,\ell}$ as above. The trivialization $\triv_{\Fcali}$ of $\Lcali(\Fcali)$ commutes with pure specializations $\psi$ such that 1 is not an eigenvalue of $\Fr(\ell)$ acting on $M_{\psi}$ in the following sense. First, for all specializations $\psi$ with values in $\Ocal_{\psi}$, $\triv_{c}$ induces a trivialization of $\Det^{-1}_{\Ocal_{\psi}}\RGamma_{c}(\Z[1/\Sigma],\Tcal_{\psi})$ by choosing the bottom arrow such that the diagram  
\begin{equation}\nonumber
\xymatrix{\Det^{-1}_{R}\RGamma_{c}(\Z[1/\Sigma],\Tcal)\ar[d]_{-\tenseur_{\psi}\Ocal_{\psi}}\ar[r]^(0.7){\triv_{c}}&R\ar[d]\\
\Det^{-1}_{\Ocal_{\psi}}\RGamma_{c}(\Z[1/\Sigma],\Tcal_{\psi})\ar[r]^(0.7){\triv_{c}}&\Ocal_{\psi}}
\end{equation}
commutes. Moreover, for all $\ell\in\Sigma\backslash\{p\}$ and all pure $\psi$ such that 1 is not an eigenvalue of $\Fr(\ell)$ acting on $M_{\psi}$, the diagram
\begin{equation}\nonumber
\xymatrix{\Lcali_{\ell}(\Fcali)\ar[d]_{-\tenseur_{\psi}\Ocal_{\psi}}\ar[r]^(0.6){\triv_{\Fcali,\ell}}&R\ar[d]\\
\Lcali_{\ell}(\Fcali_{\psi})\ar[r]^(0.6){\triv_{\Fcali_{\psi},\ell}}&\Ocal_{\psi}}
\end{equation}
commutes by theorem \ref{PropLocal}. This data yields a commutative diagram
\begin{equation}\nonumber
\xymatrix{\Lcali(\Fcali)\ar[d]_{-\tenseur_{\psi}\Ocal_{\psi}}\ar[r]^(0.6){\triv_{\Fcali}}&R\ar[d]\\
\Lcali(\Fcali_{\psi})\ar[r]^(0.6){\triv_{\Fcali_{\psi}}}&\Ocal_{\psi}}
\end{equation}
which is an algebraic incarnation of the $p$-adic interpolation of the putative analytic $p$-adic $L$-function of $\Fcali$.
This analysis leads to a refinement of principle \ref{Principle}.
\begin{Principle}\label{PrincipleRef}
The data of the algebraic special values of a $p$-adic family of motives $\Fcali$ unramified outside a finite set $\Sigma$ attached to a $G_{\Q,\Sigma}$-representation $\Tcal$ with coefficients in a domain $R$ is equivalent to the data of the algebraic $p$-adic determinant $\Lcali(\Fcali)$ of $\Fcali$ plus a choice of trivialization of $\Det^{-1}_{R}\RGamma_{c}(\Z[1/\Sigma],\Tcal)$.
\end{Principle}
This principle admits a more precise statement.
\begin{Prop}\label{PropNoObstr}
Let $\Fcali$ be a $p$-adic family of motives unramified outside a finite set $\Sigma$ attached to a $G_{\Q,\Sigma}$-representation $\Tcal$ with coefficients in $R_{\Sigma}$ as in the statement of theorem \ref{PropLocal}. Then the data of an algebraic $p$-adic $L$-function with Euler factors at primes in $\Sigma$ removed and with coefficients in $R_{\Sigma}$ is equivalent to the data of a trivialization of  $\Det^{-1}_{R_{\Sigma}}\RGamma_{c}(\Z[1/\Sigma],\Tcal)$. If $R_{\Sigma}$ is a domain, the data of a trivialization of  $\Det^{-1}_{R_{\Sigma}}\RGamma_{c}(\Z[1/\Sigma],\Tcal)$ is enough to define an algebraic $p$-adic $L$-function without removing Euler factors outside $\{p\}$ provided that $1$ is not eigenvalue of $\Fr(\ell)$ acting on $\Tcal$ for $\ell\in\Sigma\backslash\{p\}$.
\end{Prop}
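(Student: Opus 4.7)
The plan is to proceed by unpacking the definitions and reducing to the constructions already outlined in the positive-case discussion preceding the proposition. For the first assertion, I apply Definition \ref{DefLalg} with $S=\Sigma$: the tensor product over $\ell\in\Sigma\backslash\Sigma$ is empty, so $\Lcali_{\Sigma,\Sigma}(\Fcali)$ reduces to $\Det^{-1}_{R_{\Sigma}}\RGamma_{c}(\Z[1/\Sigma],\Tcal)$. An algebraic $p$-adic $L$-function with Euler factors at primes in $\Sigma$ removed is, by definition, a choice of trivialization of this free rank-one module; this part of the statement is essentially tautological.

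For the second assertion, the target object is
\begin{equation}\nonumber
\Lcali_{\{p\}}(\Fcali)=\Det^{-1}_{R_{\Sigma}}\RGamma_{c}(\Z[1/\Sigma],\Tcal)\tenseur_{R_{\Sigma}}\produittenseur{\ell\in\Sigma\backslash\{p\}}{}\Lcali^{-1}_{\ell}(\Fcali).
\end{equation}
First I would invoke Lemma \ref{LemParfait} to identify $\Lcali_{\ell}(\Fcali)$, for $\ell\in\Sigma\backslash\{p\}$, with the determinant of the two-term complex $[\Tcal^{I_{\ell}}\overset{1-\Fr(\ell)}{\fleche}\Tcal^{I_{\ell}}]$ placed in degrees $0$ and $1$. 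Because $R_{\Sigma}$ is a domain and $1$ is not an eigenvalue of $\Fr(\ell)$ acting on $\Tcal$, the algebraic Euler factor $\Eul_{\ell}(\Fcali,1)$ is a nonzero, hence regular, element of $R_{\Sigma}$; consequently $1-\Fr(\ell)$ becomes invertible after tensoring with $\Frac(R_{\Sigma})$, which makes the complex acyclic over the fraction field. The canonical isomorphism $\Det_{\Frac(R_{\Sigma})}(0)\isocan\Frac(R_{\Sigma})$ provided by the determinant functor then yields the canonical trivialization
\begin{equation}\nonumber
\triv_{\Fcali,\ell}:\Lcali_{\ell}(\Fcali)\plonge\Lcali_{\ell}(\Fcali)\tenseur_{R_{\Sigma}}\Frac(R_{\Sigma})\isocan\Frac(R_{\Sigma})
\end{equation}
already constructed in the preceding discussion, sending any $R_{\Sigma}$-basis of $\Lcali_{\ell}(\Fcali)$ to a generator of the principal fractional ideal $(\Eul_{\ell}(\Fcali,1))$.

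Tensoring a given trivialization $\triv_{c}$ of $\Det^{-1}_{R_{\Sigma}}\RGamma_{c}(\Z[1/\Sigma],\Tcal)$ with the inverses $\triv_{\Fcali,\ell}^{-1}$ for $\ell\in\Sigma\backslash\{p\}$ then produces the desired trivialization of $\Lcali_{\{p\}}(\Fcali)$ inside $\Frac(R_{\Sigma})$, and the converse implication is symmetric, since each $\triv_{\Fcali,\ell}$ is canonical and invertible once the hypothesis on the eigenvalues of $\Fr(\ell)$ is in force. The main subtlety, rather than a genuine obstacle, is that the output trivialization naturally takes values in $\Frac(R_{\Sigma})$ rather than $R_{\Sigma}$: the factors $\Eul_{\ell}(\Fcali,1)^{-1}$ contribute possible denominators. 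This is consistent with the standard formalism of Iwasawa Main Conjectures, as already illustrated by the cyclotomic example where the algebraic $p$-adic $L$-function is naturally recovered inside $\Frac(\Lambda)$, and it is precisely the algebraic counterpart of removing Euler factors at bad primes on the analytic side. Compatibility of the construction with pure specializations satisfying the eigenvalue condition, which is what makes the trivialization genuinely interpolate local $L$-values, then reduces to Theorem \ref{PropLocal} applied factorwise combined with base-change for compactly supported \'etale cohomology.
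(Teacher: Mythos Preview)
Your proof is correct and follows essentially the same approach as the paper's: both reduce the statement to the constructions already carried out in the positive-case discussion, invoking Theorem \ref{PropLocal} (and implicitly Proposition \ref{PropSpec}) for the compatibility with specializations. The paper's own proof is in fact considerably terser than yours, consisting of a single sentence pointing back to those results and the preceding discussion.

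One minor remark: your appeal to Lemma \ref{LemParfait} to identify $\Lcali_{\ell}(\Fcali)$ with $\Det_{R_{\Sigma}}[\Tcal^{I_{\ell}}\overset{1-\Fr(\ell)}{\fleche}\Tcal^{I_{\ell}}]$ carries the hypothesis that $\Tcal^{I_{\ell}}$ has finite projective dimension, which is not assumed here. This detour is unnecessary: by definition $\Lcali_{\ell}(\Fcali)$ is the determinant of $[R_{\Sigma}\overset{\Eul_{\ell}(\Fcali,1)}{\fleche}R_{\Sigma}]$, and the acyclicity over $\Frac(R_{\Sigma})$ and the resulting trivialization $\triv_{\Fcali,\ell}$ follow directly from the non-vanishing of $\Eul_{\ell}(\Fcali,1)$ without passing through $\Tcal^{I_{\ell}}$.
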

\begin{proof}
The hypotheses on $R_{\Sigma}$ and $\Tcal$ guarantee that $\Lcali(\Fcali)$ can be defined as an $R_{\Sigma}$-module. The interpolation property with Euler factors removed in general and for the full $p$-adic algebraic determinant is then a consequence of theorem \ref{PropLocal}, proposition \ref{PropSpec} and the discussion above.
\end{proof}

\subsubsection{An example}\label{SubExample}
That the local obstruction of sub-section \ref{SubObstr} is actually realized has been well-known at least since \cite{GreenbergVatsal}.

An explicit example is given by the pair of eigenforms $f_{1}\in S_{2}(\Gamma_{0}(52))$ and $f_{2}\in S_{2}(\Gamma_{0}(364))$ whose attached abelian varieties are respectively the elliptic curves 
\begin{equation}\nonumber
E_{1}:y^{2}=x^{3}+x-10,\ E_{2}:y^{2}=x^{3}-584x+5444.
\end{equation}
The eigenforms $f_{1}$ and $f_{2}$ are congruent modulo 5 so the residual $5$-adic representations attached to $f_{1}$ and $f_{2}$ are isomorphic. The automorphic $\GL_{2}(\Q_{5})$-representations $\pi(f_{1})$ and $\pi(f_{2})$ are both unramified principal series and the Hecke operator at $5$ acts on $f_{1}$ with eigenvalue $2$ and on $f_{2}$ with eigenvalue $-3$ so both are $5$-ordinary.  Let $M(f_{i})=T_{p}(E_{i})(1)$ denote the $5$-adic étale realization of the motive attached to $f_{i}$ and let $\rhobar_{f_{i}}$ be their (isomorphic) residual representations. The representations $\rhobar_{f_{i}}$ are absolutely irreducible. 

The local automorphic $\GL_{2}(\Q_{2})$-representations $\pi(f_{1})_{2}$ and $\pi(f_{2})_{2}$ are both supercuspidal so the local Euler factor of $M(f_{1})$ and $M(f_{2})$ at the prime 2 are both trivial in $\Z_{5}$. The eigenforms $f_{1}$ and $f_{2}$ have the same eigenvalue $-1$ under the action of the Hecke operator at the prime 13 and the local automorphic $\GL_{2}(\Q_{13})$-representations $\pi(f_{1})_{13}$ and $\pi(f_{2})_{13}$ are both unramified Steinberg so both have $1+13^{-1}X$ as local Euler factor. This is a unit in $\Z_{5}$ when evaluated at $X=13^{-s}$ for $s=0$. Finally, the Euler factor $\Eul_{7}(M(f_{1}),X)$ at the prime 7 is equal to $1+(2\cdot 7^{-1})X+(7\cdot 7^{-2})X^{2}$ because $\pi(f_{1})_{7}$ is an unramified principal series with trace of the Hecke operator $T_{7}$ equal to $-2$ and $\Eul_{7}(M(f_{1}),1)$ is zero modulo 5. On the other hand, $\pi(f_{1})_{7}$ is an unramified Steinberg representation with eigenvalue of $T_{7}$ equal to $1$ and so has local Euler factor $\Eul_{7}(M(f_{2}),X)$ equal to $1-7^{-1}X$. This is a unit in $\Z_{5}$ when evaluated at $X=1$. 

It follows from the discussion of sub-section \ref{SubObstr} that the algebraic $L$-value $L_{\{5\}}(M(f_{2}))$ of the motive $M(f_{2})$ has to be a non-unit modulo 5 (as indeed it is). However, the algebraic $L$-value $L_{\{5\}}(M(f_{1}))$ of the motive $M(f_{1})$ at 0 is a unit modulo $5$ and Mazur's question thus admits a negative answer for this pair of congruent eigenforms. 

This result is neither in contradiction with the existence of $p$-adic $L$-functions for families of ordinary (or more generally finite slope) eigenforms nor does it falsify proposition \ref{PropNoObstr}: the fact that $\pi(f_{1})_{7}$ is unramified principal series whereas $\pi(f_{2})_{7}$ is unramified Steinberg implies by the contraposition of theorem \ref{PropLocal} that no $p$-adic family of motives parametrized by a domain can contain both $M(f_{1})$ and $M(f_{2})$. However, $M(f_{1})$ and $M(f_{2})$ belong to a common $p$-adic family of motives parametrized by the reduced nearly-ordinary Hida-Hecke $5$-adic algebra $R^{\ord}_{\Sigma}$ generated by Hecke operators outside $\Sigma=\{2,3,5,7,13\}$ or, more abstractly, by the universal deformation ring $R_{\Sigma}(\rhobar_{f_{i}})$ parametrizing deformations of $\rhobar_{f_{i}}$ unramified outside $\Sigma$ so the possibility of defining an algebraic $p$-adic $L$-function interpolating $L_{\Sigma}(M(f_{1}))$ and $L_{\Sigma}(M(f_{2}))$ with Euler factors at $\{2,3,5,7,13\}$ is equivalent to the data of a trivialization of $\Det^{-1}_{R_{\Sigma}}\RGamma_{c}(\Z[1/\Sigma],\Tcal)$. In that particular case, and in contrast with the general case even of $p$-adic families of automorphic motives, a canonical choice of trivialization is known (and given for instance by Kato's Euler system) so that there exists an algebraic $p$-adic $L$-function interpolating these special values (as well as two algebraic $p$-adic $L$-functions interpolating the special values  $L_{\{5\}}(M(f_{1}))$ and $L_{\{5\}}(M(f_{2}))$ on the largest quotient domains of $R^{\ord}_{\Sigma}$ containing $M(f_{1})$ and $M(f_{2})$ respectively).

\section{The ETNC with coefficients in Hecke rings}\label{SubHecke}
In this sub-section, we fix a reduced complete local noetherian Hecke algebra $R_{\Sigma}$ understood to be the local factor of the $p$-adic Hecke algebra generated by operators outside the finite set of finite primes $\Sigma$ attached to an absolutely irreducible residual $G_{\Q}$-representation. We assume that there exists a $p$-adic family $\Fcali$ parametrized by $R_{\Sigma}$ and attached to a determinant $D$ or, equivalently by assumption \ref{HypIrr}, that there exists a representation 
\begin{equation}\nonumber
\rho:G_{\Q,\Sigma}\fleche\Aut_{R_{\Sigma}}(\Tcal)\simeq\GL_{n}(R_{\Sigma})
\end{equation}
such that $D=\det\circ\rho$. 

Following principle \ref{PrincipleRef} and proposition \ref{PropNoObstr}, we know that the study of algebraic $p$-adic $L$-functions attached to such an $\Fcali$ is equivalent to the study of trivializations of the determinant $\Det^{-1}_{R_{\Sigma}}\RGamma_{c}(\Z[1/\Sigma],\Tcal)$.
\subsection{The problem with the existing conjecture}
In sub-section \ref{SubChange}, a conjecture was outlined for $\Fcali$ assuming it gives rise to a smooth étale sheaf on $\Spec\Z[1/p]$. In this sub-section, we show that this outline, which followed the generalized Iwasawa main conjecture of \cite[Section 3.2]{KatoViaBdR}, does not yield a correct conjecture even for modular forms over $\GL_{2}(\Q)$.

Let $f\in S_{k}(\Gamma_{1}(N))$ be an eigencuspform. Assume for simplicity that $f$ has good ordinary reduction at $p$ and that its residual representation $\rhobar_{f}$ is absolutely irreducible. Let $F_{\pid}$ be a finite extension of $\qp$ containing the eigenvalues of $f$ and let $\Ocal$ be its ring of integers. Denote by $\Lambda_{\Ocal}$ the ring $\Ocal[[\Gamma]]$. We denote by $T(f)$ the $G_{\Q,\Sigma}$-representation with coefficients in $\Ocal$ attached to $f$. Then $T(f)$ is a lattice inside the quotient $\Hun_{\et}(X_{1}(N)\times_{\Q}\Qbar,F_{\pid})(f)$ of $\Hun_{\et}(X_{1}(N)\times_{\Q}\Qbar,F_{\pid})$ on which $T(\ell)\in R_{\Sigma}$ acts as $a_{\ell}(f)$. Let us fix a finite set $\Sigma$ of finite primes containing $p$ and the primes dividing $N$ and let $R_{\Sigma}$ be the local factor attached to $\rhobar_{f}$ of the reduced nearly-ordinary Hida-Hecke algebra generated by Hecke operators outside $\Sigma$ together with the Hecke operate $U_{p}$. We show that the family $\Fcali$ of (motives of) $p$-ordinary eigencuspforms containing $f$ cannot always satisfy the change of coefficients property of sub-section \ref{SubChange}.

Because our intent is to construct a counter-example, we may further assume that each irreducible component of $\Spec R_{\Sigma}$ is isomorphic to $\Spec\zp[[X_{1},X_{2}]]$. This is a mild assumption as the quotient $R_{\Sigma}/\aid$ of $R_{\Sigma}$ by one of its minimal prime, which is a finite and free module over $\zp[[X_{1},X_{2}]]$  in general, is isomorphic to $\zp[[X_{1},X_{2}]]$ provided that for each tame level $M$ such that $\{\ell|M\}\subset\Sigma$ and for each cohomological weight $k$, there is at most one ordinary newform in $S_{k}(\Gamma_{1}(M))$ congruent to $f$ (this is a very common happenstance and is for instance the case for the Hida family attached to the set $\{2,3,5,7,13\}$ containing the forms $f_{1}$ and $f_{2}$ of sub-section \ref{SubExample}). 

\subsubsection{Euler systems and trivializations}
We recall that the motive $M(f)$ of $f$ admits a $p$-adic Euler systems $\{\z(f)_{L}\}_{L}$ indexed by finite abelian extensions of $\Q$ which we view as in \cite[Theorem 12.5]{KatoEuler} as a non-zero morphism
\begin{equation}\nonumber
Z(f):\Hun_{\et}(X_{1}(N)\times_{\Q}\Qbar,\Ocal)^{+}(f)\fleche\Hun_{\et}(\Z[1/p],T(f)\tenseur_{\Ocal}\Lambda_{\Ocal}).
\end{equation}
As we already remarked in \ref{SubCyc}, $\Hun_{\et}(\Z[1/p],T(f)\tenseur_{\Ocal}\Lambda_{\Ocal})$ is under our hypotheses a $\Lambda_{\Ocal}$-module free of rank 1 and $\Hun_{\et}(X_{1}(N)\times_{\Q}\Qbar,\Ocal)^{+}$ is a free $\Ocal$-module of rank 1. Hence, the image of $Z(f)$ in $\Hun_{\et}(\Z[1/p],T(f)\tenseur_{\Ocal}\Lambda_{\Ocal})$ is generated by a single element $\z(f)$ and 
\begin{equation}\nonumber
\Hun_{\et}(\Z[1/p],T(f)\tenseur\Lambda_{\Ocal})/\Lambda_{\Ocal}\cdot\z(f)
\end{equation}
is a torsion $\Lambda_{\Ocal}$-module. In a slight abuse of notation, we keep the symbol $Z(f)$ to denote the morphism
\application{Z(f)}{\Lambda}{\Hun_{\et}(\Z[1/p],T(f)\tenseur_{\Ocal}\Lambda_{\Ocal})}{1}{\ \ \z(f)}
and its lift to a morphism of complexes
\applicationsimple{Z(f)}{\Lambda[-1]}{\RGamma_{\et}(\Z[1/p],T\tenseur_{\Ocal}\Lambda_{\Ocal}).}
In agreement with the weak Leopoldt's conjecture, the cone of the morphism $Z(f)$ is then shown in \cite[Theorem 12.4]{KatoEuler} to be acyclic after tensor product with $\Frac(\Lambda_{\Ocal})$.
\begin{Prop}\label{PropKato}
The determinant $\Det_{\Lambda_{\Ocal}}\Cone Z(f)$ is sent to $\Lambda_{\Ocal}$ through the canonical identification
\begin{equation}\nonumber
\left(\Det_{\Lambda_{\Ocal}}\Cone Z(f)\right)\tenseur_{\Lambda_{\Ocal}}\Frac(\Lambda_{\Ocal})\isocan\Frac(\Lambda_{\Ocal})
\end{equation}
if and only if the ETNC for the motive $M(f)$ is true.
\end{Prop}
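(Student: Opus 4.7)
The plan is to translate the statement into an equality of characteristic ideals by computing the image of $\Det_{\Lambda_{\Ocal}} \Cone Z(f)$ in $\Frac(\Lambda_{\Ocal})$ via the cohomology of the defining distinguished triangle, and then to recognize the resulting equality as Kato's Iwasawa main conjecture for $M(f)$.

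To carry this out, I first apply the long exact sequence of cohomology attached to the triangle $\Lambda_{\Ocal}[-1] \overset{Z(f)}{\fleche} \RGamma_{\et}(\Z[1/p], T(f) \tenseur_{\Ocal} \Lambda_{\Ocal}) \fleche \Cone Z(f)$. The vanishing of $H^{0}_{\et}$ and $H^{3}_{\et}$ recalled in paragraph \ref{SubCyc} (which rests on absolute irreducibility of $\rhobar_{f}$ and Artin-Verdier duality), combined with the injectivity of $Z(f)$ on $H^{1}$ (since $\z(f) \neq 0$ maps into a rank-one free module over the domain $\Lambda_{\Ocal}$), yields $H^{0} \Cone Z(f) = H^{3} \Cone Z(f) = 0$ together with canonical isomorphisms $H^{1} \Cone Z(f) \isocan \Hun_{\et}(\Z[1/p], T(f) \tenseur \Lambda_{\Ocal}) / \Lambda_{\Ocal} \cdot \z(f)$ and $H^{2} \Cone Z(f) \isocan H^{2}_{\et}(\Z[1/p], T(f) \tenseur \Lambda_{\Ocal})$. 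Both of these are torsion $\Lambda_{\Ocal}$-modules: the former because $\Cone Z(f) \tenseur \Frac(\Lambda_{\Ocal})$ is acyclic by the theorem of Kato invoked in the statement, and the latter by the weak Leopoldt conjecture, which is known for $T(f)$ by Kato. As $\Lambda_{\Ocal}$ is regular of Krull dimension $2$, both modules have finite projective dimension and admit characteristic ideals.

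Second, I invoke the standard identity from the Knudsen-Mumford formalism: for any perfect complex $C$ with $C \tenseur \Frac(\Lambda_{\Ocal})$ acyclic, the canonical trivialization $\Det(C) \tenseur \Frac(\Lambda_{\Ocal}) \isocan \Frac(\Lambda_{\Ocal})$ identifies the image of $\Det(C)$ with the fractional ideal $\prod_{i} \carac_{\Lambda_{\Ocal}}(H^{i} C)^{(-1)^{i+1}}$. Applied to $\Cone Z(f)$, whose cohomology sits in degrees $1$ and $2$, this gives that the image of $\Det_{\Lambda_{\Ocal}} \Cone Z(f)$ in $\Frac(\Lambda_{\Ocal})$ equals
$$\carac_{\Lambda_{\Ocal}}\!\bigl(\Hun_{\et}(\Z[1/p], T(f) \tenseur \Lambda_{\Ocal}) / \Lambda_{\Ocal} \z(f)\bigr) \cdot \carac_{\Lambda_{\Ocal}}\!\bigl(H^{2}_{\et}(\Z[1/p], T(f) \tenseur \Lambda_{\Ocal})\bigr)^{-1}.$$
This image equals $\Lambda_{\Ocal}$ if and only if the two characteristic ideals coincide, which is precisely the statement of Kato's Iwasawa main conjecture for $M(f)$ and, as discussed in sub-sections \ref{SubComp} and \ref{SubCyc}, the cyclotomic-deformation incarnation of the ETNC for the motive $M(f)$.

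The main technical obstacle lies in pinning down the Knudsen-Mumford sign convention in the second step, since the exponent $(-1)^{i+1}$ depends both on the cohomological degrees in question and on the convention for the determinant of a shifted complex. The cleanest way around this is to reduce the identity to the prototypical case $C = [\Lambda_{\Ocal} \overset{a}{\fleche} \Lambda_{\Ocal}]$ (with $a$ regular but not a unit) and cross-check against the explicit formula already derived in paragraph \ref{SubCyc}, where $\Det^{-1}_{\Lambda} \RGamma_{\et}(\Z[1/p], T \tenseur \Lambda)$ is shown to map to $\Det_{\Lambda} \Hun \cdot \carac_{\Lambda}(H^{2})$. Once this consistency check is carried out, the rest of the argument is a mechanical computation.
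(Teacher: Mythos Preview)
Your proposal is correct and amounts to an explicit unpacking of the paper's own proof, which is simply the citation ``This is a restatement of theorems 12.4, 12.5 and conjecture 12.10 of \cite{KatoEuler}.'' Your cohomological computation of $\Cone Z(f)$ and the identification of the image of its determinant with the ratio $\carac(\Hun_{\et}/\Lambda_{\Ocal}\z(f))\cdot\carac(H^{2}_{\et})^{-1}$ is exactly what Kato's theorems 12.4 and 12.5 supply (torsionness and acyclicity after inverting $p$), and the equality of these two characteristic ideals is precisely Kato's conjecture 12.10, which is the Iwasawa-theoretic form of the ETNC for $M(f)$. Your discussion of the sign convention is well taken and consistent with the computation carried out in paragraph \ref{SubCyc}.
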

\begin{proof}
This is a restatement of theorems 12.4, 12.5 and conjecture 12.10 of \cite{KatoEuler}. 
\end{proof}
Proposition \ref{PropKato} can be informally summed up as claiming that if the ETNC is true, then $\z(f)$ is equal to the zeta element $\z_{\Lambda}(M(f))$. In this opportunity, we remark that not only is the list of motives of rank greater than 1 for which the ETNC is known very thin, in fact even the much weaker statement that, if the ETNC is true, then it is true for a specific known system of motivic elements is known in rank greater than 1 only for Kato's Euler systems. Equivalently, Kato's Euler systems is the only Euler systems for motives of rank greater than 1 for which it is definitely known that it is the motivic Euler system predicted by the ETNC if the ETNC is true.

\subsubsection{Discrepancy of Euler factors}\label{SubDisc}
Let $R(\aid)=R_{\Sigma}/\aid$ be the quotient of $R_{\Sigma}$ by the unique minimal prime $\aid$ contained in the kernel of the specialization of $R_{\Sigma}$ attached to $f$. Let $\Fcali(\aid)$ be the $p$-adic family of motives with coefficients in $R(\aid)$ arising from $\Fcali$ and let $T(\aid)$ be the attached $G_{\Q,\Sigma}$-representation. In view of proposition \ref{PropKato}, to show that \cite[Conjecture 3.2.2]{KatoViaBdR} cannot be correct as stated for motives attached to eigenforms, it is enough to show that it may happen that no basis $\z_{R(\aid)}(\Spec\Z[1/p],\Fcali(\aid))$ of
\begin{equation}\nonumber
\Delta_{R(\aid)}(\Spec\Z[1/p],\Fcali(\aid))=\Det^{-1}_{R(\aid)}\RGamma_{\et}(\Z[1/p],\Fcali(\aid))\tenseur_{R(\aid)}\Det^{-1}_{R(\aid)}\Fcali(\aid)^{+}
\end{equation}
satisfies the following three properties (all predicted by \cite[Conjecture 3.2.2]{KatoViaBdR}).
\begin{enumerate}
\item\label{ItETNC} Let $\psi\in\Hom^{\cl}(R(\aid),\Qbar_{p})$ with values in a discrete valuation ring $S$ be the specialization attached to a classical eigencuspform $f_{\psi}$ whose motives belong to $\Fcali(\aid)$. Let $\Fcali_{\psi}$ be the $p$-adic family of motives parametrized by $\Lambda_{S}=S[[\Gamma]]$ equal to the cyclotomic deformation of the $G_{\Q,\Sigma}$-representation of $T(f_{\psi})$. Then $\z_{R(\aid)}(\Spec\Z[1/p],\Fcali(\aid))$ is sent to the basis $\z_{\Lambda_{S}}(\Spec\Z[1/p],\Fcali_{\psi})$ through the canonical isomorphism
\begin{equation}\nonumber
\Delta_{R(\aid)}(\Spec\Z[1/p],\Fcali(\aid))\tenseur_{R(\aid),\psi}\Lambda_{S}\isocan\Delta_{\Lambda_{S}}(\Spec\Z[1/p],\Fcali_{\psi}).
\end{equation}
\item\label{ItInter} Let $\psi\in\Hom^{\cl}(R(\aid),\Qbar_{p})$ with values in a discrete valuation ring $S$ be the specialization attached to a classical eigencuspform $f_{\psi}$ whose motives belong to $\Fcali(\aid)$. Let $\Fcali_{\psi}$ be the $p$-adic family of motives parametrized by $\Lambda_{S}=S[[\Gamma]]$ equal to the cyclotomic deformation of the $G_{\Q,\Sigma}$-representation of $T(f_{\psi})$. Then $\z_{R(\aid)}(\Spec\Z[1/p],\Fcali(\aid))$ is sent to the basis 
\begin{equation}\nonumber
\Det_{\Lambda_{S}}\Cone Z(f_{\psi})\subset\Delta_{\Lambda_{S}[1/p]}(\Spec\Z[1/p],\Fcali_{\psi})
\end{equation}
through the canonical isomorphism
\begin{equation}\nonumber
\Delta_{R(\aid)}(\Spec\Z[1/p],\Fcali(\aid))\tenseur_{R(\aid),\psi}\Lambda_{S}[1/p]\isocan\Delta_{\Lambda_{S}[1/p]}(\Spec\Z[1/p],\Fcali_{\psi}).
\end{equation}
\item\label{ItProb} Let $\psi\in\Hom(R(\aid),\Qbar_{p})$ with values in a discrete valuation ring $S$ be a specialization of $R(\aid)$. Let $\Fcali_{\psi}$ be the étale sheaf parametrized by $\Lambda_{S}=S[[\Gamma]]$ equal to the cyclotomic deformation of the $G_{\Q,\Sigma}$-representation of $\Tcal\tenseur_{R(\aid),\psi}S$. Then $\z_{R(\aid)}(\Spec\Z[1/p],\Fcali(\aid))$ is sent to a basis of
\begin{equation}\nonumber
\Delta_{\Lambda_{S}[1/p]}(\Spec\Z[1/p],\Fcali_{\psi})
\end{equation}
through the canonical isomorphism
\begin{equation}\nonumber
\Delta_{R(\aid)}(\Spec\Z[1/p],\Fcali(\aid))\tenseur_{R(\aid),\psi}\Lambda_{S}[1/p]\isocan\Delta_{\Lambda_{S}[1/p]}(\Spec\Z[1/p],\Fcali_{\psi}).
\end{equation}
\end{enumerate}
In other words, we claim that the $p$-adic interpolation property at motivic points together with the truth of the ETNC at these points (properties \ref{ItETNC} and \ref{ItInter}) may in general contradict the $p$-adic interpolation property at all points (property \ref{ItProb}). As we of course believe in the truth of the ETNC for modular forms, we expect that it is property \ref{ItProb} which may fail.

Property \ref{ItInter} above is a statement of $p$-adic interpolation of the ETNC at motivic points. As the ETNC with coefficients in the cyclotomic Iwasawa algebra is known to be equivalent to conjectures in Iwasawa theory of \cite{GreenbergIwasawaRepresentation,GreenbergIwasawaMotives} for good ordinary modular forms, constructing a basis of $\Delta_{R(\aid)}(\Spec\Z[1/p],\Fcali(\aid))$ satisfying property \ref{ItInter} amounts to constructing such a basis compatible with these conjectures of Greenberg. The following proposition consequently shows that the existence of a $p$-adic $L$-function defined on $\Spec R(\aid)$ in the sense of \cite{EmertonPollackWeston} is enough to construct a basis of $\Delta_{R(\aid)}(\Spec\Z[1/p],\Fcali(\aid))$ satisfying property \ref{ItInter}.
\begin{Prop}\label{PropL}
Under our hypotheses on $\Fcali$, there exists a basis of $\Delta_{R(\aid)}(\Spec\Z[1/p],\Fcali(\aid))$ satisfying property \ref{ItInter}.
\end{Prop}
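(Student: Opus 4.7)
The plan is to construct the basis from Ochiai's $p$-adic interpolation of Kato's Euler system along Hida families, using the Emerton--Pollack--Weston $p$-adic $L$-function to certify that the resulting construction is non-degenerate at the generic point. Throughout I view $R(\aid)\simeq\zp[[X_{1},X_{2}]]$ as a regular local ring of Krull dimension $3$, with one variable playing the role of the weight and the other (after inverting $p$) playing the role of the cyclotomic variable, so that a classical specialization $\psi$ factors as $R(\aid)\fleche\Lambda_{S}[1/p]$ with the second variable mapping to the variable of $\Gamma$.

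First I would establish that $\RGamma_{\et}(\Z[1/p],\Tcal(\aid))$ behaves exactly as in the cyclotomic analysis of sub-section \ref{SubCyc}: absolute irreducibility of $\rhobar$ and Artin--Verdier duality force $H^{0}$ and $H^{3}$ to vanish; the weak Leopoldt conjecture for $\Tcal(\aid)$, known for Hida families, forces $H^{2}_{\et}$ to be $R(\aid)$-torsion; and the Auslander--Buchsbaum formula then exhibits $\Hun_{\et}(\Z[1/p],\Tcal(\aid))$ as a free $R(\aid)$-module of rank $1$. Next I would invoke Ochiai's interpolation of Kato's Euler system to produce a big Kato zeta element $\z(\aid)\in\Hun_{\et}(\Z[1/p],\Tcal(\aid))$ whose specialization at each classical $\psi$ equals $\z(f_{\psi})$ up to the standard cyclotomic interpolation factor. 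The hypothesised existence of the EPW $p$-adic $L$-function on $\Spec R(\aid)$, combined with the Euler system divisibility propagated from classical points, certifies that $\z(\aid)$ is non-zero at the generic point of $\Spec R(\aid)[1/p]$.

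Viewing $\z(\aid)$ as a morphism $Z(\aid):R(\aid)[-1]\fleche\RGamma_{\et}(\Z[1/p],\Tcal(\aid))$ yields a perfect cone $\Cone Z(\aid)$ acyclic after tensoring with $\Frac(R(\aid))$, whose determinant therefore acquires a canonical basis transported from the acyclic case. Unfolding the distinguished triangle associated to this cone gives a canonical generator of $\Det^{-1}_{R(\aid)}\RGamma_{\et}(\Z[1/p],\Tcal(\aid))$; tensoring with an $R(\aid)$-basis of $\Det^{-1}_{R(\aid)}\Tcal(\aid)^{+}$, which is free of rank $1$ since $p$ is odd and $\rhobar$ is absolutely irreducible, produces the candidate $\z_{R(\aid)}(\Spec\Z[1/p],\Fcali(\aid))$. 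Property \ref{ItInter} then follows because compactly supported cohomology commutes with derived base change and because Ochiai's interpolation yields $\z(\aid)\tenseur_{R(\aid),\psi}\Lambda_{S}[1/p]=\z(f_{\psi})$, so that one obtains a canonical identification $\Cone Z(\aid)\Ltenseur_{R(\aid),\psi}\Lambda_{S}[1/p]\isocan\Cone Z(f_{\psi})\tenseur_{\Lambda_{S}}\Lambda_{S}[1/p]$ descending, after taking determinants, to the required equality of bases.

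The hard part will be the Ochiai-style interpolation together with the intervention of the EPW $p$-adic $L$-function: it is precisely the latter which guarantees that the big Euler system class is non-trivial at the generic point and that the cone construction does not collapse, compatibly with the trivializations coming from the individual $\Cone Z(f_{\psi})$ at classical points. Without the EPW input, one cannot rule out that $\z(\aid)$ lies in a proper sub-module of $\Hun_{\et}(\Z[1/p],\Tcal(\aid))$ at the generic point, in which case the cone would have non-trivial torsion cohomology obstructing the passage from a canonical generator of $\Det\Cone Z(\aid)$ to a basis of $\Delta_{R(\aid)}(\Spec\Z[1/p],\Fcali(\aid))$.
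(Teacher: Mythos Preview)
Your approach is correct and is in fact the alternative route the paper itself flags immediately after its proof (``a more explicit and natural (but also much more intricate and subtle) proof \dots can be given by $p$-adically interpolating Kato's Euler systems in Hida families,'' citing Fukaya--Kato--Sharifi). The paper's own argument is different: it never constructs a big Euler system class in $\Hun_{\et}(\Z[1/p],\Tcal(\aid))$. Instead it works on the \Nekovar--Selmer side with $\RGamma_{f}(G_{\Q,\Sigma},T(\aid))$ and uses the Emerton--Pollack--Weston $p$-adic $L$-function $L(\aid)$ \emph{as the definition} of the trivialization---one chooses bases $b_{1}$ of $\Det_{R(\aid)}\Hun_{f}$ and $b_{2}$ of $\Det^{-1}_{R(\aid)}\Fcali(\aid)^{+}$ and declares $b_{2}\mapsto L(\aid)\cdot b_{1}$; combined with the canonical trivialization of $\Det^{-1}_{R(\aid)}H^{2}_{f}$ (torsion by \cite{OchiaiMainConjecture,FouquetOchiai}), this produces a basis whose image is $\carac_{R(\aid)}H^{2}_{f}/L(\aid)$. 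Property~\ref{ItInter} is then checked \emph{pointwise}: at each classical $\psi$, \cite[Section 17.3]{KatoEuler} identifies this quotient with the image of $\Det_{\Lambda_{\psi}}\Cone Z(f_{\psi})$. So the paper trades the delicate interpolation of zeta elements for the (already packaged) interpolation of $L$-values plus Kato's theorem applied one $\psi$ at a time; your route yields a more intrinsic big zeta element at the cost of the harder Ochiai-type construction.

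One small correction to your last paragraph: the non-degeneracy of $\z(\aid)$ at the generic point does not actually require the EPW input---it follows immediately from the non-vanishing of $\z(f_{\psi})$ at a single classical $\psi$ (Kato). Moreover, even if $\z(\aid)$ generated a proper submodule of $\Hun_{\et}$, the cone would still be generically acyclic and your construction would still go through; what matters for property~\ref{ItInter} is only that the specialization of $\z(\aid)$ agrees with $\z(f_{\psi})$ up to $\Lambda_{S}[1/p]^{\times}$, which is exactly what the Ochiai interpolation gives.
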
 
\begin{proof}
As we have assumed that $R(\aid)$ is a Gorenstein ring (even a regular one), the $p$-adic $L$-function $L(\aid)\in R(\aid)$ of \cite[Section 3.4]{EmertonPollackWeston}, which is defined in \textit{loc. cit.} under slightly different hypotheses on the residual representation, exists thanks to \cite{KitagawaLpadic}. 

We first consider the \Nekovar-Selmer complex $\RGamma_{f}(G_{\Q,\Sigma},T(\aid))$ which is well-defined as $T(\aid)$ is nearly-ordinary. By \cite{OchiaiMainConjecture,FouquetOchiai}, there is a canonical isomorphism 
\begin{equation}\nonumber
\Det^{-1}_{R(\aid)}H^{2}_{f}(G_{\Q,\Sigma},T(\aid))\tenseur_{R(\aid),\psi}\Lambda_{\psi}\isocan\Det^{-1}_{\Lambda_{\psi}}H^{2}_{f}(G_{\Q,\Sigma},T_{\psi}\tenseur_{\Ocal_{\psi}}\Lambda_{\psi})
\end{equation}
for all specialization $\psi$ attached to a modular form $f_{\psi}$ with eigenvalues in $\Ocal_{\psi}$ (here $\Lambda_{\psi}$ denotes the ring $\Ocal_{\psi}[[\Gamma]]$) and the modules $H^{2}_{f}(G_{\Q,\Sigma},T(\aid))$ and $H^{2}_{f}(G_{\Q,\Sigma},T_{\psi}\tenseur_{\Ocal_{\psi}}\Lambda_{\psi})$ are torsion as $R(\aid)$-module and $\Lambda_{\psi}$-module respectively. Hence the trivialization
\begin{equation}\nonumber
\triv_{f}:\Det^{-1}_{R(\aid)}H^{2}_{f}(G_{\Q,\Sigma},T(\aid))\subset\Det^{-1}_{R(\aid)}H^{2}_{f}(G_{\Q,\Sigma},T(\aid))\tenseur_{R(\aid)}\Frac(R(\aid))\isocan\Frac(R(\aid))
\end{equation}
of $\Det^{-1}_{R(\aid)}H^{2}_{f}(G_{\Q,\Sigma},T(\aid))$ is compatible with classical specializations in the sense that the diagram 
\begin{equation}\nonumber
\xymatrix{
\Det^{-1}_{R(\aid)}H^{2}_{f}(G_{\Q,\Sigma},T(\aid))\ar[r]\ar[d]&R(\aid)\ar[d]\\
\Det^{-1}_{\Lambda_{\psi}}H^{2}_{f}(G_{\Q,\Sigma},T_{\psi}\tenseur\Lambda_{\psi})\ar[r]&\Lambda_{\psi}
}
\end{equation}
commutes.

We next construct a trivialization of the determinant
\begin{equation}\nonumber
\Det^{-1}_{R(\aid)}\RGamma_{f}(G_{\Q,\Sigma},T(\aid))\tenseur_{R(\aid)}\Det^{-1}_{R(\aid)}\Fcali^{+}.
\end{equation}
To that effect, we choose a basis $b_{1}$ of the graded invertible module $\Det_{R(\aid)}\Hun_{f}(G_{\Q,\Sigma},T(\aid))$ and a basis  $b_{2}$ of the graded invertible module $\Det^{-1}_{R(\aid)}\Fcali(\aid)^{+}$. The map
\application{\pi}{\Det^{-1}_{R(\aid)}\Fcali(\aid)^{+}}{\Det_{R(\aid)}\Hun_{f}(G_{\Q,\Sigma},T(\aid))}{b_{2}}{L(\aid)\cdot b_{1}}
induces a trivialization of
\begin{equation}\nonumber
X=\Det_{R(\aid)}\Hun_{f}(G_{\Q,\Sigma},T(\aid))\tenseur\Det^{-1}_{R(\aid)}\Fcali(\aid)^{+}\plonge\Frac(R(\aid))
\end{equation}
sending a basis of $X$ to $L(\aid)$. Combined with the trivialization $\triv_{f}$ of $\Det^{-1}_{R(\aid)}H^{2}_{f}(G_{\Q,\Sigma},T(\aid))$ as above yields 
\begin{equation}\nonumber
\triv:\Det^{-1}_{R(\aid)}\RGamma_{f}(G_{\Q,\Sigma},T(\aid))\tenseur_{R(\aid)}\Det^{-1}_{R(\aid)}\Fcali^{+}\fleche\Frac(R(\aid)).
\end{equation}
In fact, under our ongoing hypothesis that $R(\aid)$ is a regular local ring, we can compute that the image of a basis of the right-hand side is sent to a basis of the invertible ideal
\begin{equation}\nonumber
\frac{\carac_{R(\aid)}H^{2}_{f}(G_{\Q,\Sigma},T(\aid))}{L(\aid)}\subset\Frac(R(\aid)).
\end{equation}
The property of $H^{2}_{f}(G_{\Q,\Sigma},T(\aid))$ recalled above and the interpolation property of $L(\aid)$ given in \cite[Proposition 3.4.3]{EmertonPollackWeston} imply that $\triv$ is compatible with classical specialization in the sense that the diagram
\begin{equation}\nonumber
\xymatrix{
\Det^{-1}_{R(\aid)}\RGamma_{f}(G_{\Q,\Sigma},T(\aid))\tenseur_{R(\aid)}\Det^{-1}_{R(\aid)}\Fcali^{+}\ar[r]\ar[d]&R(\aid)\ar[d]\\
\Det^{-1}_{\Lambda_{\psi}}\RGamma_{f}(G_{\Q,\Sigma},T_{\psi}\tenseur\Lambda_{\psi})\tenseur_{\Lambda_{\psi}}\Det^{-1}_{\Lambda_{\psi}}\Fcali_{\psi}^{+}\ar[r]&\Lambda_{\psi}
}
\end{equation}
commutes.

By \cite[Section 17.3]{KatoEuler}, the image of the trivialization 
\begin{equation}\nonumber
\triv:\Det^{-1}_{\Lambda_{\psi}}\RGamma_{f}(G_{\Q,\Sigma},T_{\psi}\tenseur\Lambda_{\psi})\tenseur_{\Lambda_{\psi}}\Det^{-1}_{\Lambda_{\psi}}\Fcali_{\psi}^{+}\fleche\Lambda_{\psi}
\end{equation}
for a specialization $\psi$ attached to an eigencuspform with good ordinary specialization is equal to the image of $\Det_{\Lambda_{\psi}}Z(f_{\psi})$ through the canonical trivialization of proposition \ref{PropKato}. Hence the diagram 
\begin{equation}\nonumber
\xymatrix{
\Det^{-1}_{R(\aid)}\RGamma_{f}(G_{\Q,\Sigma},T(\aid))\tenseur_{R(\aid)}\Det^{-1}_{R(\aid)}\Fcali^{+}\ar[r]\ar[d]&R(\aid)\ar[d]\\
\Det^{-1}_{\Lambda_{\psi}}\RGamma_{f}(G_{\Q,\Sigma},T_{\psi}\tenseur\Lambda_{\psi})\tenseur_{\Lambda_{\psi}}\Det^{-1}_{\Lambda_{\psi}}\Fcali_{\psi}^{+}\ar[r]&\Lambda_{\psi}
}
\end{equation}
in which the bottom horizontal map is the trivialization of proposition \ref{PropKato} and the top horizontal map sends a chosen basis of $\Delta_{R(\aid)}(\Spec\Z[1/p],\Fcali)$ to a basis of the image of $\triv$ is commutative and there thus exists a basis of $\Delta_{R(\aid)}(\Spec\Z[1/p],\Fcali)$ satisfying property \ref{ItInter} as claimed.
\end{proof}
A more explicit and natural (but also much more intricate and subtle) proof of the existence of a basis of $\Delta_{R(\aid)}(\Spec\Z[1/p],\Fcali)$ satisfying property \ref{ItInter} can be given by $p$-adically interpolating Kato's Euler systems in Hida families. See \cite[Theorem 3.2.3]{FukayaKatoSharifi} for a proof along these lines.
\begin{TheoEnglish}\label{TheoFailure}
There exist $p$-adic families for which the three requirements \ref{ItETNC}, \ref{ItInter} and \ref{ItProb} cannot be simultaneously true.
\end{TheoEnglish}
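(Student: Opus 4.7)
The plan is to combine the positive construction of Proposition~\ref{PropL} with the local obstruction of sub-section~\ref{SubObstr}. By Proposition~\ref{PropL}, there exists a basis $\z \in \Delta_{R(\aid)}(\Spec\Z[1/p], \Fcali(\aid))$ satisfying properties~\ref{ItETNC} and~\ref{ItInter}; its trivialization $\triv$ equals $\carac_{R(\aid)}H^2_f(G_{\Q,\Sigma}, T(\aid)) \cdot L(\aid)^{-1}$ up to units in $R(\aid)$. Since any other basis of the invertible module $\Delta_{R(\aid)}$ differs from $\z$ by a unit in $R(\aid)$, it is enough to exhibit a single specialization $\psi_0\colon R(\aid)\fleche\Qbar_p$ at which $\z$ fails~\ref{ItProb}.

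I would then unfold~\ref{ItProb}. The fundamental line $\Delta_{\Lambda_{\psi}[1/p]}(\Spec\Z[1/p], \Fcali_\psi)$ formulated in sub-section~\ref{SubChange} is built from $\RGamma_\et(\Z[1/p], \Fcali_\psi)$, which---because typical $p$-adic families of automorphic motives are ramified at primes in $\Sigma\setminus\{p\}$---differs from the compactly supported complex appearing in Definition~\ref{DefLalg} precisely by the local algebraic Euler factors $\Lcali_\ell$ at $\ell \in \Sigma \setminus \{p\}$. So satisfying~\ref{ItProb} at $\psi$ is equivalent to requiring that
\[
\psi\Bigl(L(\aid)^{-1}\cdot\carac_{R(\aid)} H^2_f(G_{\Q,\Sigma},T(\aid)) \cdot \produit{\ell \in \Sigma\setminus\{p\}}{}\Eul_\ell(\Fcali(\aid),1)\Bigr)
\]
be non-zero (indeed a generator of the target invertible module) for \emph{every} specialization $\psi$.

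The next step is to construct a non-motivic $\psi_0$ at which this product vanishes. Suppose some $\ell \in \Sigma\setminus\{p\}$ is such that $\Eul_\ell(\Fcali(\aid),1) \in R(\aid)$ is a non-unit; by Theorem~\ref{PropLocal} this is forced whenever the algebraic local Euler factor at $\ell$ is a non-unit at one, hence all, classical specializations of $\Fcali(\aid)$. Then $V := V(\Eul_\ell(\Fcali(\aid),1))$ is a non-trivial rigid-analytic Weil divisor of $\Spec R(\aid)[1/p]$. Picking any $\Qbar_p$-point $\psi_0$ on $V$ not lying above a classical specialization (which exists as the classical locus is countable whereas $V$ is a positive-dimensional rigid analytic subvariety), and assuming the Iwasawa main conjecture for the Hida family so that the ratio $\carac H^2_f / L(\aid)$ is a unit in $R(\aid)$, the product displayed above specializes to zero at $\psi_0$. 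Hence the image of $\z$ fails to be a generator of $\Delta_{\Lambda_{\psi_0}[1/p]}(\Spec\Z[1/p], \Fcali_{\psi_0})$, contradicting~\ref{ItProb}.

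The main obstacle is to exhibit concretely a Hida family in which the algebraic Euler factor at some $\ell\in\Sigma\setminus\{p\}$ is a non-unit in $R(\aid)$; once such a family is fixed, the remaining argument is standard Iwasawa-theoretic bookkeeping, and the main conjecture hypothesis is known to hold for Hida families by work of Kato and Skinner--Urban. The concrete realization is supplied by adapting sub-section~\ref{SubExample}: in the irreducible component of the nearly-ordinary Hida--Hecke algebra $R^{\ord}_{\Sigma}$ (with $\Sigma=\{2,3,5,7,13\}$) containing $f_1\in S_2(\Gamma_0(52))$, the local algebraic Euler factor $\Eul_7(\Fcali(\aid),1)$ specializes modulo $5$ to $\Eul_7(M(f_1),1)\equiv 0\pmod{5}$ as computed there, hence by Theorem~\ref{PropLocal} it is a non-unit at every classical specialization and therefore a genuine non-unit in the two-dimensional regular domain $R(\aid)$, producing the desired divisor and its non-motivic points.
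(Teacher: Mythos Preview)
Your overall strategy---pin down the basis via Proposition~\ref{PropL}, then violate requirement~\ref{ItProb} at a carefully chosen non-classical specialization---agrees with the paper. But the mechanism you invoke is not the one that actually produces the failure, and the concrete example you give does not work.

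The paper's argument hinges on a \emph{mismatch between two different Euler factors} at a non-classical $\psi$: on the one hand the specialization $\psi\bigl(\Eul_{\ell}(\Fcali(\aid),1)\bigr)$ of the generic Euler factor on the irreducible component (this is what enters $L(\aid)$ via the Emerton--Pollack--Weston construction), and on the other the Euler factor $\Eul_{\ell}(\Fcali_{\psi},1)$ of the specialized representation (this is what governs the Galois-theoretic fundamental line $\Delta_{\Lambda_{\psi}[1/p]}$). By Theorem~\ref{PropLocal} these coincide at classical points, but at a non-classical $\psi$ where the monodromy drops---for instance where $\Fcali(\aid)$ is ramified at $\ell$ but $T_{\psi}$ becomes unramified---they differ, and this is exactly what breaks~\ref{ItProb}. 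Your displayed test for~\ref{ItProb} involves only the generic Euler factors $\Eul_{\ell}(\Fcali(\aid),1)$ and never introduces $\Eul_{\ell}(\Fcali_{\psi},1)$, so it misses the discrepancy entirely; if the two Euler factors happened to coincide for all $\psi$, a basis of $\Delta_{R(\aid)}$ would always go to a basis of $\Delta_{\Lambda_{\psi}[1/p]}$, regardless of whether $\psi\bigl(\Eul_{\ell}(\Fcali(\aid),1)\bigr)$ vanishes.

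This is precisely what goes wrong in your explicit example. On the irreducible component of $R^{\ord}_{\Sigma}$ containing $f_{1}\in S_{2}(\Gamma_{0}(52))$, the family is \emph{unramified} at $7$ (the level $52$ is prime to $7$); hence for every specialization $\psi$ of $R(\aid)$, classical or not, one has $\Eul_{7}(\Fcali_{\psi},1)=\psi\bigl(\Eul_{7}(\Fcali(\aid),1)\bigr)$ trivially. No discrepancy arises at $\ell=7$ on this component, and so your chosen $\psi_{0}$ on $V\bigl(\Eul_{7}(\Fcali(\aid),1)\bigr)$ does not falsify~\ref{ItProb}. The paper's scenario requires the \emph{opposite} configuration: a prime $\ell\in\Sigma$ at which $\Fcali(\aid)$ is ramified, together with a non-classical $\psi$ at which $T_{\psi}$ loses ramification. (The paper in fact concedes that no explicit Hida family is presently known to realize this scenario, for computational reasons.) Finally, invoking the Iwasawa main conjecture to make $\carac H^{2}_{f}/L(\aid)$ a unit actually undercuts your argument: if the trivialization value of $\z_{R(\aid)}$ is a unit in $R(\aid)$, its image under any ring map remains a unit.
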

\begin{proof}
As at most one basis can satisfy requirement \ref{ItInter}, proposition \ref{PropL} and its proof imply that there exist families for which the requirements \ref{ItETNC} and \ref{ItInter}, if they are satisfied, have to be satisfied by the basis arising from the trivialization constructed from the $p$-adic $L$-function $L(\aid)$. Consider however a prime $\ell\in\Sigma$ at which $\Fcali$ is ramified and a (necessarily non-classical) point $\Fcali_{\psi}$ such that $T_{\psi}$ is unramified at $\ell$. The element $L(\aid)$ of \cite{EmertonPollackWeston} is the image through the isomorphism of \cite[Theorem 3.6.2]{EmertonPollackWeston} of a $p$-adic $L$-function interpolating the special values at motivic points on the whole of $\Spec R_{\Sigma}$ but with the generic Euler factors with coefficients in $R(\aid)$ at $\Sigma$ removed. Hence $\psi(L(\aid))$ is the product of a $p$-adic $L$-function times $\psi(\Eul_{\ell}(\Fcali,1))$. However, the trivialization of $\Det^{-1}_{R(\aid)}\RGamma_{f}(G_{\Q,\Sigma},T(\aid))$ composed with $\psi$ yields by construction the Euler factor $\Eul_{\ell}(\Fcali_{\psi},1)$. As $\psi$ is not classical, there is no reason for these two Euler factors to coincide and one could very well be a unit without the other being one. If this happens, a basis of $\Delta(\Spec\Z[1/p],T(\aid))$ satisfying the requirements \ref{ItETNC} and \ref{ItInter} cannot satisfy the requirement \ref{ItProb} at $\psi$.
\end{proof}
No explicit example of Hida families of nearly-ordinary eigencuspforms is known to exhibit the phenomenon described at the end of the proof of theorem \ref{TheoFailure}. However, this is mainly due to the extreme computational difficulty of computing non-classical points in Hida families. The consensus among the experts is that such families exist (and in fact that they are the generic case). 
\subsection{A revised conjecture}
The detailed example of sub-section \ref{SubDisc} somewhat obscures the general principle underlying all such counterexamples: in agreement with principle \ref{PrincipleRef}, we should expect $p$-adic $L$-function to come from trivialization of the compactly supporte étale cohomology over the whole reduced space together with specialization maps to irreducible components recovering the omitted local Euler factors. In our actual counterexample, this was indicated by the crucial step in the proof of theorem \ref{TheoFailure} where it was noted that the $p$-adic $L$-function of \cite{EmertonPollackWeston} is the image of a more general $p$-adic $L$-function with Euelr factors removed with coefficients in $R_{\Sigma}$. By construction, the compactly supported étale cohomology complex will involve \emph{Galois} Euler factor in these specialization maps (namely the algebraic determinants $\Lcali_{\ell}(\Fcali)$) whereas the trivialization, because it involves Betti cohomology, will involve \emph{automorphic} Euler factors. At classical points, these Euler factors coincide according to the Local Langlands Correspondence and local-global compatibility but they have no reason to coincide in general. The following conjectures are designed to remedy this problem.
\subsubsection{Statements of the conjectures}
Let $\Sigma$ be a finite set of finite primes containing $\{p\}$. Let $R_{\Sigma}$ be a reduced $p$-torsion free complete local noetherian ring with finite residue field of characteristic $p$. Let $\Fcali$ be the $p$-adic family of automorphic motives attached to an $n$-dimensional determinant $D$ with values in $R_{\Sigma}$. Assume that $\Fcali$ is unramified outside $\Sigma$, that its residual representation $\rhobar$ is absolutely irreducible and denote by $(T_{\Sigma},\rho_{\Sigma},R_{\Sigma})$ the $G_{\Q,\Sigma}$-representation such that $D=\det\circ\rho$. For $\aid$ a minimal prime ideal in $\Spec R_{\Sigma}$, denote by $\Fcali(\aid)$ the specialized $p$-adic family and by $(T(\aid),\rho(\aid),R(\aid))$ the corresponding $G_{\Q,\Sigma}$-representation. If $\psi\in\Hom(R_{\Sigma},\Qbar_{p})$ has values in a discrete valuation ring $\Ocal_{\psi}$, denote likewise by $\Fcali_{\psi}$ the specialized family.

\begin{Conj}\label{ConjMain}
There exist a perfect complex of $R_{\Sigma}$-module $M_{\Sigma}$ and a trivialization 
\begin{equation}\nonumber
\triv:\Det^{-1}_{R_{\Sigma}}\RGamma_{c}(\Z[1/\Sigma],T_{\Sigma})\tenseur_{R_{\Sigma}}\Det^{-1}_{R_{\Sigma}}M_{\Sigma}\fleche R_{\Sigma}
\end{equation}verifying the following properties.
\begin{enumerate}
\item For all minimal prime ideal $\aid\in\Spec R_{\Sigma}$ and all $\psi\in\Hom(R_{\Sigma},\Qbar_{p})$ denote by $M_{\Sigma}\tenseur_{R_{\Sigma}}R(\aid)$ by $M(\aid)$ and $M_{\Sigma}\tenseur_{R_{\Sigma},\psi}\Ocal_{\psi}$ by $M_{\psi}$. Then there are specified isomorphisms 
\begin{equation}\nonumber
\left(\Det^{-1}_{R_{\Sigma}}\RGamma_{c}(\Z[1/\Sigma],T_{\Sigma})\tenseur_{R_{\Sigma}}\Det^{-1}_{R_{\Sigma}}M_{\Sigma}\right)\tenseur_{R_{\Sigma}}R(\aid)\isocan\Lcali_{\{p\}}(\Fcali(\aid))\tenseur_{R(\aid)}\Det^{-1}_{R(\aid)}M(\aid)
\end{equation}
and 
\begin{equation}\nonumber
\left(\Det^{-1}_{R_{\Sigma}}\RGamma_{c}(\Z[1/\Sigma],T_{\Sigma})\tenseur_{R_{\Sigma}}\Det^{-1}_{R_{\Sigma}}M_{\Sigma}\right)\tenseur_{R_{\Sigma},\psi}\Ocal_{\psi}\isocan\Lcali_{\{p\}}(\Fcali_{\psi})\tenseur_{\Ocal_{\psi}}\Det^{-1}_{\Ocal_{\psi}}M_{\psi}
\end{equation}
%\begin{equation}\nonumber
%\xymatrix{
%\Det^{-1}_{R_{\Sigma}}\RGamma_{c}(\Z[1/\Sigma],T_{\Sigma})\tenseur_{R_{\Sigma}}\Det^{-1}_{R_{\Sigma}}M_{\Sigma}\ar[r]\ar[d]&\Lcali_{\{p\}}(\Fcali_{\psi})\tenseur_{\Ocal_{\psi}}\Det^{-1}_{\Ocal_{\psi}}M_{\psi}\\
%\Lcali_{\{p\}}(\Fcali(\aid))\tenseur_{R(\aid)}\Det^{-1}_{R(\aid)}M(\aid)\ar[ru]
%}
%\end{equation}
inducing diagrams
\begin{equation}\label{DiagTriv}
\xymatrix{
\Det^{-1}_{R_{\Sigma}}\RGamma_{c}(\Z[1/\Sigma],T_{\Sigma})\tenseur_{R_{\Sigma}}\Det^{-1}_{R_{\Sigma}}M_{\Sigma}\ar[r]^(0.8){\triv}\ar[d]&R_{\Sigma}\ar[d]\\
\Lcali_{\{p\}}(\Fcali(\aid))\tenseur_{R(\aid)}\Det^{-1}_{R(\aid)}M(\aid)\ar[r]&R(\aid)}
\end{equation}
and
\begin{equation}\label{DiagTrivPsi}
\xymatrix{
\Det^{-1}_{R_{\Sigma}}\RGamma_{c}(\Z[1/\Sigma],T_{\Sigma})\tenseur_{R_{\Sigma}}\Det^{-1}_{R_{\Sigma}}M_{\Sigma}\ar[r]^(0.8){\triv}\ar[d]&R_{\Sigma}\ar[d]\\
\Lcali_{\{p\}}(\Fcali_{\psi})\tenseur_{\Ocal_{\psi}}\Det^{-1}_{\Ocal_{\psi}}M_{\psi}\ar[r]&\Ocal_{\psi}}
\end{equation}
where the horizontal maps of diagram \ref{DiagTriv} are defined by commutativity.
\item There exists a basis $\z_{\Sigma}$ of $\Det^{-1}_{R_{\Sigma}}\RGamma_{c}(\Z[1/\Sigma],T_{\Sigma})\tenseur_{R_{\Sigma}}\Det^{-1}_{R_{\Sigma}}M_{\Sigma}$ such that the image of $\z_{\Sigma}$ through a motivic specialization $\psi$ with values in $\Ocal_{\psi}$ is the $p$-adic étale realization $\z_{\{p\}}(M_{\psi})\in\Delta_{\{p\}}(M_{\psi})$ of the zeta element of $M_{\psi}$.
\end{enumerate}
\end{Conj}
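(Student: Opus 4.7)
The plan is to take $M_{\Sigma}$ to be an appropriate localization of the completed cohomology of \cite{EmertonInterpolationEigenvalues} attached to the relevant reductive group, level structure, and residual representation $\rhobar$. For the modular curve tower with $\G=\GL_{2}/\Q$ this should be the localization at $\mgot_{R_{\Sigma}}$ of $\Htildeun$, and in greater generality it is the analogous completed cohomology module localized at $R_{\Sigma}$ (viewed, via local-global compatibility in families, as a module over the big Hecke algebra containing $R_{\Sigma}$). Under assumption \ref{HypIrr}, I expect $M_{\Sigma}$ to be a perfect complex concentrated in a single degree, for which the critical input is a Calegari--Geraghty style vanishing theorem controlling completed cohomology outside the middle degree. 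The trivialization $\triv$ is then to be built from a combination of (i) a period pairing between $M_{\Sigma}$ and $\RGamma_{c}(\Z[1/\Sigma],T_{\Sigma})$ refining the complex and $p$-adic periods of conjecture \ref{ConjETNC} to the level of families, and (ii) an interpolated $p$-adic $L$-function on $\Spec R_{\Sigma}$ playing the role of $L(\aid)$ in proposition \ref{PropL} but defined on the whole reduced spectrum with coefficients in $R_{\Sigma}$.

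To verify the specified base change isomorphisms in property (1), I would proceed componentwise and pointwise. For specialization to $R(\aid)$, the key is that the envisioned $p$-adic $L$-function with coefficients in $R_{\Sigma}$ specializes to the $p$-adic $L$-function of \cite{EmertonPollackWeston} on $R(\aid)$ (with the generic Euler factors relative to $\Sigma$ removed), while the complex $\RGamma_{c}$ satisfies base change for free since it is derived from étale cohomology of $\Spec\Z[1/\Sigma]$. For specialization to a classical $\psi$, the crucial step is the Local Langlands Correspondence in families, which produces a compatibility between the automorphic Euler factors carried by the Hecke action on $M_{\Sigma}$ (hence specialized by $\psi$ on $M_{\psi}$) and the Galois Euler factors $\Lcali_{\ell}(\Fcali_{\psi})$ entering $\Lcali_{\{p\}}(\Fcali_{\psi})$ via theorem \ref{PropLocal}. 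This compatibility is precisely the mechanism that cures the defect exhibited in theorem \ref{TheoFailure}: the Galois-theoretic Euler factors are always present on the $\Lcali_{\{p\}}$ side, while the corresponding \emph{automorphic} Euler factors are absorbed into the trivialization through the specialization of $M_{\Sigma}$, and the two match only at classical $\psi$.

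The zeta element $\z_{\Sigma}$ of property (2) is to be produced by $p$-adic interpolation of Kato-type Euler systems across the family $\Fcali$, following the strategy of \cite{FukayaKatoSharifi} in the Hida family case: one interpolates the motivic elements $\z_{\{p\}}(M_{\psi})$ across classical specializations and uses Zariski density together with Euler system norm relations to extend to $\Spec R_{\Sigma}$. The main obstacle is threefold in nature. First, the $p$-adic Local Langlands Correspondence in families, which is the only plausible mechanism for matching automorphic and Galois Euler factors in a $p$-adically continuous fashion, is available in the required strength essentially only for $\GL_{2}(\Q_{p})$. Second, the perfectness of $M_{\Sigma}$ depends on presently open vanishing theorems for completed cohomology outside the middle degree. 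Third, the existence of Euler systems interpolating special values for $p$-adic families of motives of rank $>2$ is wide open. Unconditionally, the proof is expected to go through in those cases where all three inputs are known; this essentially reduces to the setting of \cite{HeckeETNC} for Hida families of rational eigencuspforms admitting Taylor--Wiles systems, where it yields theorem \ref{TheoHecke}.
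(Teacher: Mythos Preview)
The statement you are addressing is a \emph{conjecture}, not a theorem, and the paper does not purport to prove it. There is therefore no ``paper's own proof'' to compare against. What the paper does supply is (i) a conjectural identification of $M_{\Sigma}$ via completed cohomology (Conjecture~\ref{ConjCompleted}), (ii) a discussion of why such an $M_{\Sigma}$ should carry the automorphic Euler factors needed to make the diagrams commute, and (iii) supporting evidence in the $\GL_{2}$ case under Taylor--Wiles hypotheses (Theorem~\ref{TheoHecke}, proved in \cite{HeckeETNC}).

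Your proposal is not a proof either, and you are appropriately candid about this: you identify the three genuine obstructions (Local Langlands in families beyond $\GL_{2}(\qp)$, perfectness of completed cohomology, and existence of Euler systems in higher rank) and observe that the argument is unconditional only in the setting of \cite{HeckeETNC}. This is an accurate assessment, and your outlined strategy---take $M_{\Sigma}$ from completed cohomology, build $\triv$ from an interpolated period pairing and a family $p$-adic $L$-function, and produce $\z_{\Sigma}$ by interpolating Kato-type zeta elements \`a la \cite{FukayaKatoSharifi}---is essentially the same programme the paper sketches in its remarks following Conjecture~\ref{ConjCompleted} and in the discussion preceding Theorem~\ref{TheoHecke}. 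One minor correction: the paper's Conjecture~\ref{ConjCompleted} does not take $M_{\Sigma}$ to be the full localized completed cohomology, but rather the $R_{\Sigma}$-span of finitely many special elements $\gamma_{s}^{\pm}$ inside it; this is what is expected to be perfect, not the entire completed cohomology module.
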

In the context of the next proposition, we consider that a basis $\z_{\{p\}}(M)$ of the fundamental line $\Delta_{\{p\}}(M)$ is a motivic element if it satisfies the first property of conjecture \ref{ConjTNC} and that the conjecture is then that it satisfies the second. 
\begin{Prop}\label{Prop}
If conjecture \ref{ConjMain} is true, then question \ref{Q3} admits a positive answer.
\end{Prop}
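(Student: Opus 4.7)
The plan is to deduce the $p$-part of the Tamagawa Number Conjecture for $M_{2}$ from that for $M_{1}$ by transporting both $p$-adic zeta elements from a single master basis $\z_{\Sigma}$ furnished by Conjecture~\ref{ConjMain}, whose $R_{\Sigma}$-integrality propagates to both motivic specializations simultaneously. The bulk of the argument is bookkeeping: given the strong compatibilities built into Conjecture~\ref{ConjMain}, the transfer between congruent motives is structural.

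First, I would realize $M_{1}$ and $M_{2}$ as motivic specializations of a single $p$-adic family. Since $M_{1}\equiv M_{2}\pmod{p}$ and the common residual representation $\rhobar$ is absolutely irreducible, the deformation theory of determinants produces a complete local noetherian $W(k)$-algebra $R_{\Sigma}$, and by Assumption~\ref{HypIrr} a $G_{\Q,\Sigma}$-representation $(T_{\Sigma},\rho_{\Sigma},R_{\Sigma})$, whose specializations at motivic points $\psi_{i}\in\Hom^{\cl}(R_{\Sigma},\Qbar_{p})$ with values in discrete valuation rings $\Ocal_{\psi_{i}}$ recover the integral $p$-adic \'etale realizations $T_{i}$ of $M_{i}$; here $\Sigma$ is any finite set of finite primes containing $p$ together with the ramification of $M_{1}$ and $M_{2}$.

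Second, I invoke Conjecture~\ref{ConjMain} to obtain a perfect complex $M_{\Sigma}$, a trivialization $\triv$, and a basis
\[\z_{\Sigma}\in X_{\Sigma}:=\Det^{-1}_{R_{\Sigma}}\RGamma_{c}(\Z[1/\Sigma],T_{\Sigma})\tenseur_{R_{\Sigma}}\Det^{-1}_{R_{\Sigma}}M_{\Sigma}\]
with the property that $\z_{\Sigma}\tenseur_{R_{\Sigma},\psi_{i}}1$ equals the $p$-adic \'etale realization $\z_{\{p\}}(M_{i})\in\Delta_{\{p\}}(M_{i})$ of the motivic zeta element. Being an $R_{\Sigma}$-basis of the free graded invertible module $X_{\Sigma}$, it specializes to an $\Ocal_{\psi_{i}}$-basis of $X_{\Sigma}\tenseur_{R_{\Sigma},\psi_{i}}\Ocal_{\psi_{i}}$; the canonical diagram \eqref{DiagTrivPsi} of Conjecture~\ref{ConjMain} identifies this $\Ocal_{\psi_{i}}$-module with $\Lcali_{\{p\}}(\Fcali_{\psi_{i}})\tenseur\Det^{-1}_{\Ocal_{\psi_{i}}}M_{\psi_{i}}$. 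Applying Lemma~\ref{LemParfait} and Theorem~\ref{PropLocal} at the pure specializations $\psi_{i}$ shows that $\Lcali_{\{p\}}(\Fcali_{\psi_{i}})$ coincides with $\Det^{-1}_{\Ocal_{\psi_{i}}}\RGamma_{\et}(\Z[1/p],T_{i})$ up to the local \'etale factors at $\Sigma\setminus\{p\}$, and the identification of the Betti contribution $\Det^{-1}_{\Ocal_{\psi_{i}}}M_{\psi_{i}}$ with $\Det^{-1}_{\Ocal_{\psi_{i}}}(T_{i}^{+})$ is precisely what is demanded by Conjecture~\ref{ConjMain} at motivic points; assembling these yields a canonical isomorphism of $X_{\Sigma}\tenseur_{R_{\Sigma},\psi_{i}}\Ocal_{\psi_{i}}$ with the Bloch--Kato integral line appearing in Conjecture~\ref{ConjTNC}.

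The conclusion is then immediate. The $p$-part of the TNC for $M_{i}$ is the assertion that $\z_{\{p\}}(M_{i})$ generates the Bloch--Kato integral lattice, and this is automatic the moment the $\psi_{i}$-specialization of the $R_{\Sigma}$-basis $\z_{\Sigma}$ is identified, as above, with a generator of that lattice. Assuming the TNC for $M_{1}$ fixes the $\psi_{1}$-specialization of $X_{\Sigma}$ as the Bloch--Kato lattice of $M_{1}$; the functoriality of the identification in $\psi$ transports this to $\psi_{2}$ without modification, whence $\z_{\{p\}}(M_{2})=\z_{\Sigma}\tenseur_{R_{\Sigma},\psi_{2}}1$ generates the Bloch--Kato lattice of $M_{2}$, giving the TNC for $M_{2}$. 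The principal obstacle is neither Galois-cohomological nor arithmetic but functorial: one must track carefully that the $R_{\Sigma}$-integral structure on $X_{\Sigma}$ truly restricts to the Bloch--Kato integral structure at every motivic point simultaneously, with no spurious unit discrepancy slipping in through the identification of Betti factors or through the comparison of algebraic and \'etale local terms. This simultaneous functoriality is the substantive content of the canonical diagrams of Conjecture~\ref{ConjMain}, and it is the reason Proposition~\ref{Prop} reduces, modulo this strong input, to formal compatibility.
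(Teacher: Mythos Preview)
Your argument has a real gap at the transfer step. In your second paragraph you assert that Conjecture~\ref{ConjMain} already furnishes a canonical isomorphism of $X_\Sigma \tenseur_{R_\Sigma,\psi_i} \Ocal_{\psi_i}$ with the Bloch--Kato integral line for each motivic $\psi_i$; if that were so, the TNC would follow at \emph{every} motivic point of $\Fcali$ (a basis specializes to a basis), and the hypothesis that TNC holds for $M_1$ would play no role. The conjecture does not assert this: it identifies $X_\Sigma \tenseur \Ocal_\psi$ with $\Lcali_{\{p\}}(\Fcali_\psi) \tenseur \Det^{-1}_{\Ocal_\psi} M_\psi$ and declares that $\z_\Sigma$ specializes to the $p$-adic \'etale realization of the zeta element, but the comparison of this $\Ocal_\psi$-lattice with the Bloch--Kato lattice is precisely what the TNC asserts and is not given for free. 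Your third paragraph partly acknowledges this---you use TNC for $M_1$ to pin down the equality of lattices at $\psi_1$---but then ``the functoriality of the identification in $\psi$ transports this to $\psi_2$'' is not an argument: there is no direct map between $\Ocal_{\psi_1}$-data and $\Ocal_{\psi_2}$-data, and knowing two lattices coincide at $\psi_1$ says nothing about $\psi_2$ without further input.

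The further input the paper uses, and which you do not touch, is the trivialization $\triv: X_\Sigma \to R_\Sigma$ together with the commutative diagram~\eqref{DiagTrivPsi}. The argument runs: TNC at $\phi$ means the bottom row of~\eqref{DiagTrivPsi} at $\phi$ is surjective onto $\Ocal_\phi$; by commutativity, the image of $\triv$ therefore maps onto $\Ocal_\phi$ under the \emph{local} ring map $R_\Sigma \to \Ocal_\phi$; hence $\triv$ already hits a unit of $R_\Sigma$ and is an isomorphism. Specializing now at $\psi$, the bottom row of~\eqref{DiagTrivPsi} at $\psi$ is surjective onto $\Ocal_\psi$, which is TNC for $M_\psi$. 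The locality step---an element of a local ring whose image in a local quotient is a unit is itself a unit---is what genuinely transports integrality across specializations, and it cannot be replaced by an appeal to abstract functoriality.
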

\begin{proof}
Assume conjecture \ref{ConjMain} to be true for the family $\Fcali$. Let $M_{\psi}$ and $M_{\phi}$ be two motives with coefficients in $\Ocal_{\psi}$ and $\Ocal_{\phi}$ respectively and both belonging to $\Fcali$. Assume that the TNC is true for $M_{\phi}$. Then the image of $\triv$ is an element of $R_{\Sigma}$ which is sent to $\Ocal$ by the local morphisms of the arrows of diagram \eqref{DiagTrivPsi} for $M_{\phi}$. Thus, $\triv$ has to be an isomorphism. The image of $\Lcali_{\{p\}}(\Fcali_{\psi})\tenseur_{\Ocal_{\psi}}\Det^{-1}_{\Ocal_{\psi}}M_{\psi}$ through the horizontal map of diagram \eqref{DiagTrivPsi} for $\psi$ then has to be $\Ocal_{\psi}$ and so the TNC is true for $M_{\psi}$.
\end{proof}
The right vertical arrows of diagram \eqref{DiagTriv} and \eqref{DiagTrivPsi} involve a comparison of
\begin{equation}\nonumber
\Det^{-1}_{R_{\Sigma}}\RGamma_{c}(\Z[1/\Sigma],T_{\Sigma})\tenseur_{R_{\Sigma}}R(\aid)\textrm{ and }\Det^{-1}\RGamma_{c}(\Z[1/\Sigma],T_{\Sigma})\tenseur_{R_{\Sigma}}\Ocal_{\psi}
\end{equation}
with $\Lcali(\Fcali(\aid))$ and $\Lcali(\Fcali_{\psi})$ respectively so involve the tensor products 
\begin{equation}\nonumber
\produittenseur{\ell\in\Sigma}{}\Lcali_{\ell}(\Fcali(\aid))\textrm{ and }\produittenseur{\ell\in\Sigma}{}\Lcali_{\ell}(\Fcali_{\psi}).
\end{equation}
Conjecture \ref{ConjMain} can thus be true only if the same tensor products appear with an opposite sign in the maps
\begin{equation}\nonumber
\Det^{-1}_{R_{\Sigma}}\fleche\Det^{-1}_{R(\aid)}M(\aid),\ \Det^{-1}_{R_{\Sigma}}\fleche\Det^{-1}_{\Ocal_{\psi}}M_{\psi}
\end{equation}
so that $M_{\Sigma}$ somehow has to know about the Euler factors of $\Fcali$ and $\Fcali_{\psi}$. Because $\Lcali_{\ell}(\Fcali)\tenseur_{R(\aid)}\Ocal_{\psi}$ need not be canonically isomorphic to $\Lcali_{\ell}(\Fcali_{\psi})$ when $\psi$ is not classical, we further remark that if conjecture \ref{ConjMain} is true, then the map 
\begin{equation}\nonumber
\Det^{-1}_{R_{\Sigma}}\fleche\Det^{-1}_{\Ocal_{\psi}}M_{\psi}
\end{equation}
does not in general factor through 
\begin{equation}\nonumber
\Det^{-1}_{R_{\Sigma}}\fleche\Det^{-1}_{R(\aid)}M(\aid)
\end{equation}
and thus has to be sensitive not only to the irreducible component of $\Spec R_{\Sigma}$ through which $\psi$ factors but also to the respective role of the other components. On the other hand, local Euler factors do commute with specialization when $\psi$ is classical so the two diagrams \eqref{DiagTriv} and \eqref{DiagTrivPsi} should combine in that case in a commutative diagram as follows.
\begin{equation}\nonumber
\xymatrix{
\Det^{-1}_{R_{\Sigma}}\RGamma_{c}(\Z[1/\Sigma],T_{\Sigma})\tenseur_{R_{\Sigma}}\Det^{-1}_{R_{\Sigma}}M_{\Sigma}\ar[r]^(0.8){\triv}\ar[d]&R_{\Sigma}\ar[d]\\
\Lcali_{\{p\}}(\Fcali(\aid))\tenseur_{R(\aid)}\Det^{-1}_{R(\aid)}M(\aid)\ar[d]\ar[r]&R(\aid)\ar[d]\\
\Lcali_{\{p\}}(\Fcali_{\psi})\tenseur_{\Ocal_{\psi}}\Det^{-1}_{\Ocal_{\psi}}M_{\psi}\ar[r]&\Ocal_{\psi}
}
\end{equation}
Not so many perfect complexes $M_{\Sigma}$ can satisfy such a list of requirements so we are led to the following conjecture.

A tower of Shimura variety $(X(U_{i}))_{i}$ is a projective system $(X(U_{i}))_{i}$ of Shimura varieties indexed by the level such that $U_{i}$ is maximal hyperspecial at primes outside $\Sigma$, constant outside $p$ and shrinks to identity at $p$.
\begin{Conj}\label{ConjCompleted}
Assume there exists a tower of Shimura varieties such that all motivic points of $\Fcali$ arise as the localization at a suitable maximal ideal of the Hecke algebra $R_{\Sigma}$ of the cohomology $H^{d}_{\et}(X(U_{i})\times_{\Q}\Qbar,\Fcal)$ in degree $d$ of the Shimura variety $X(U_{i})$ with coefficients in some $p$-adic local system $\Fcal$. Let $n^{+}$ be the rank of $T_{\Sigma}^{+}$ and let $n^{-}$ be $n-n^{+}$. Then there exist special elements $(\gamma^{+}_{s})_{1\leq s\leq n^{+}}$ and $(\gamma^{-}_{s})_{1\leq s\leq n^{-}}$ in the completed cohomology
\begin{equation}\nonumber
\Htilde^{d}_{\et}(X_{\Sigma},\qp)_{\mgot_{\rhobar}}=\left(\limproj{r}\ \liminj{i}\ H^{d}_{\et}(X(U_{i})\times_{\Q}\Qbar,\Z/p^{r}\Z)_{\mgot_{\rhobar}}\right)\tenseur_{\zp}\qp
\end{equation}
localized at the maximal ideal of the Hecke algebra corresponding to the residual representation $\rhobar$ such that $M_{\Sigma}$ is a projective resolution of the $R_{\Sigma}$-span of the $(\gamma^{\pm}_{s})$ in $\Htilde^{d}_{\et}(X_{\Sigma},\zp)_{\mgot_{\rhobar}}$.
\end{Conj}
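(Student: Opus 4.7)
The plan is to construct the complex $M_{\Sigma}$ and the elements $\gamma^{\pm}_{s}$ simultaneously, by patching completed cohomology along the lines of the Taylor--Wiles--Calegari--Geraghty method, and then to match the resulting object against the trivialization $\triv$ of Conjecture \ref{ConjMain}. First I would localize the tower $(X(U_{i}))_{i}$ at the maximal ideal $\mgot_{\rhobar}$ of the abstract Hecke algebra $\Hecke_{\Sigma}$ attached to the residual representation $\rhobar$; by the residual absolute irreducibility assumption \ref{HypIrr}, this localization kills all Eisenstein contributions and the localized completed cohomology $\Htilde^{d}_{\et}(X_{\Sigma},\zp)_{\mgot_{\rhobar}}$ acquires a faithful action of $R_{\Sigma}$ through local-global compatibility in $p$-adic families. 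The candidate for $M_{\Sigma}$ is then a projective resolution, over $R_{\Sigma}$, of the submodule generated by the images of the $\gamma^{\pm}_{s}$. Once one has a patched power-series ring $S_{\infty}$ surjecting onto $R_{\Sigma}$ over which the patched completed cohomology is a Cohen--Macaulay (or, in favourable cases, free) module, this resolution has the expected length and $M_{\Sigma}$ is automatically a perfect complex of $R_{\Sigma}$-modules.

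Next I would construct the elements $\gamma^{\pm}_{s}$ themselves by $p$-adic interpolation from classical motivic points. At a classical specialization $\psi \in \Hom^{\cl}(R_{\Sigma},\Qbar_{p})$, the specialization of $\Htilde^{d}_{\et}(X_{\Sigma},\qp)_{\mgot_{\rhobar}}$ along $\psi$ receives, via an Eichler--Shimura-type comparison, a copy of the classical étale realization $M_{\psi,\et,p}$, on which complex conjugation has $\pm 1$ eigenspaces of the prescribed ranks $n^{\pm}$. Choosing compatible bases of these eigenspaces over a Zariski-dense set of classical $\psi$, and using that such a choice can be lifted to completed cohomology (because the maps in question are surjective after the localization at $\mgot_{\rhobar}$), produces a well-defined family of $R_{\Sigma}$-valued elements $\gamma^{\pm}_{s}$. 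The key local-global compatibility input here is the interpolation theorem \ref{TheoAuto}: the determinant $D^{I_{\ell}}$ takes values in $R_{\Sigma}$ for every $\ell$, so the Hecke/Galois match is not just pointwise but holds as an identity in $R_{\Sigma}$, which is what guarantees that the $\gamma^{\pm}_{s}$ glue across classical points.

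The third and decisive step is to compare the pairing of $M_{\Sigma}$ against $\RGamma_{c}(\Z[1/\Sigma],T_{\Sigma})$ through $\triv$ with the étale realization of the motivic zeta element at every classical $\psi$, so as to confirm that this $M_{\Sigma}$ is the one required by Conjecture \ref{ConjMain}. Concretely, one has to show that for a basis of the $R_{\Sigma}$-span of the $\gamma^{\pm}_{s}$ and a basis of $\Det^{-1}_{R_{\Sigma}}\RGamma_{c}(\Z[1/\Sigma],T_{\Sigma})$ paired under $\triv$, the image in $R_{\Sigma}$ specializes, at each $\psi$ attached to a pure motive $M_{\psi}$, to the class of $\z_{\{p\}}(M_{\psi}) \in \Delta_{\{p\}}(M_{\psi})$. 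In the rational $\GL_{2}/\Q$ setting this follows from the Taylor--Wiles-based identification of completed cohomology with dualizing modules of deformation rings alluded to in the paper's discussion of theorem \ref{TheoHecke}, combined with proposition \ref{PropL} and Kato's Euler system.

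The hard part will be this third step in the general automorphic case: it is essentially the $p$-adic interpolation of the Tamagawa Number Conjecture itself. Outside the $\GL_{2}/\Q$ situation, the motivic zeta elements of \cite{FontainePerrinRiou} are not yet available in families, so the pairing $\triv$ can only be pinned down by pure thought on the completed-cohomology side. I would accordingly aim for a conditional theorem: given a $p$-adic family $\Fcali$ equipped with a Galois Euler system whose image generates the relevant Bloch--Kato Selmer module, the construction above produces an $M_{\Sigma}$ fulfilling both conjectures \ref{ConjMain} and \ref{ConjCompleted}; unconditionally, one would be reduced to the aforementioned known cases. The obstacle is thus not technical in any single step but rather the absence, at present, of a motivic input of the strength of Kato's Euler system for general $p$-adic families of automorphic motives, which is precisely the optimism the authors acknowledge when stating the conjecture.
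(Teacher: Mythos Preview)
The statement you are addressing is Conjecture \ref{ConjCompleted}, not a theorem, and the paper does not prove it. There is therefore no proof in the paper to compare your proposal against. What the paper offers instead is a section of motivation and remarks explaining why completed cohomology is the natural source for $M_{\Sigma}$, followed by supporting evidence in the $\GL_{2}/\Q$ case via Theorem \ref{TheoHecke}, whose proof is deferred to \cite{HeckeETNC}.

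Your proposal is not a proof either, and you acknowledge this in your final paragraph: the decisive third step requires motivic zeta elements in families, which are not available beyond Kato's Euler system for $\GL_{2}/\Q$. As a research outline your strategy is sensible and aligns with the paper's own heuristics---patching via Taylor--Wiles--Calegari--Geraghty, interpolating classical Eichler--Shimura data, and matching against the trivialization $\triv$ at classical points---but it remains a programme, not an argument. In particular, your second step (lifting compatible bases of $\pm 1$-eigenspaces from a Zariski-dense set of classical points to genuine elements of completed cohomology) is stated as if surjectivity after localization were enough; in reality one needs the lifts to be coherent across the whole family, which is exactly the Euler-system-type input you concede is missing. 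So the honest summary is: the conjecture is open, the paper does not claim otherwise, and your proposal correctly identifies both the natural line of attack and the obstruction.
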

\subsubsection{Remarks and discussion}
We explain the motivations underlying conjectures \ref{ConjMain} and \ref{ConjCompleted}.

First of all, they express together the common insight that while the trivialization of the algebraic determinant attached to a single motive should come from its Betti cohomology, the trivialization of the algebraic determinant of a $p$-adic family of motives should come from the cohomology of the relevant object at all possible levels; that is to say completed cohomology. In that sense, they are natural generalizations of the statement that zeta elements should form Euler systems: just like the compatibility of the TNC with respect to proper base-change implies that zeta elements for individual motives should arise as specializations of zeta elements with coefficients in Iwasawa-theoretic families encapsulating all abelian fields of definition (as recalled in sub-section \ref{SubCyc}), its compatibility with change of ring of coefficients suggests that zeta elements should come from elements in completed cohomology.

Furthermore, the hope is that a module $M_{\Sigma}$ as in conjecture \ref{ConjCompleted} inherits from $\Htilde^{d}_{\et}(X_{\Sigma},\qp)_{\mgot_{\rhobar}}$ the conjectural property that this object has of interpolating the Breuil-Schneider Local Langlands Correspondence of \cite{BreuilSchneider} at primes outside $p$ and thus that $M_{\Sigma}$ has the property described above with respect to specialization maps (to wit, that the correct Euler factors appear in specializations, including when they are not classical).

It would of course be very desirable to give even a conjectural description of the elements $\gamma^{\pm}$ of conjecture \ref{ConjCompleted}. However, as the images of these elements in the étale cohomology complex should coincide with the bases of zeta elements at classical points, these images form an Euler system and the problem of constructing them even conjecturally amounts to the problem of giving a conjectural construction of Euler systems in completed cohomology of Shimura varieties: at the very least an arduous one. A first step towards characterizing them more precisely would be to extend the generalized reciprocity law of \cite{KatoReciprocity} to eigenvarieties. In fact, the proof of this explicit reciprocity law for modular forms in \cite{KatoEuler} requires crucially the study of the Shimura curve with infinite level $X_{1}(Np^{\infty})$, what is nowadays called a perfectoid Shimura variety. The first named author confesses that the almost-isomorphism between completed cohomology and the cohomology of perfectoid Shimura varieties of \cite[Theorem IV.2.1]{ScholzeTorsion} evoked in him the dream that the elements $\gamma_{s}^{\pm}$ and the module $M_{\Sigma}$ could be constructed by un-tilting a comparable module over a perfectoid Shimura variety of characteristic $p$ (such a module is easy to construct because conjectures on special values of $L$-functions are known in great generality even when the residual characteristic of the scheme and the residual characteristic of the étale sheaf are the same; see for instance \cite{EmertonKisin}). However, such a dream remains at present largely just that-a dream-as the almost-isomorphism and the tilting process in question are highly transcendental so that tracking algebraic special elements through such constructions seems hard.

\subsubsection{Supporting evidence}
In this sub-section, we review the main results of \cite{HeckeETNC} and explain why they are in agreement with conjectures \ref{ConjMain} and \ref{ConjCompleted} for $\G=\GL_{2}$.

Let $f\in S_{k}(\Gamma_{1}(N))$ be an eigencuspform of weight $k\geq 2$ and let $p\nmid N$ be an odd prime. Denote by
\begin{equation}\nonumber
\rhobar_{f}:G_{\Q}\fleche\GL_{2}(\bar{\Fp})
\end{equation}
the residual $G_{\Q}$-representation attached to $f$ and by $N(\rhobar_{f})$ its Artin conductor outside $p$. Let $\Sigma$ be a finite set of finite places containing $\{\ell|N(\rhobar_{f})p\}$.
\begin{TheoEnglish}\label{TheoHecke}
 Assume that $\rhobar_{f}$ satisfies the following hypotheses.
\begin{enumerate}
\item Let $p^{*}$ be $(-1)^{(p-1)/2}p$. The representation $\rhobar_{{f}}|_{G_{\Q(\sqrt{p^{*}})}}$ is absolutely irreducible.
\item Either the representation $\rhobar_{{f}}|_{G_{\qp}}$ is reducible but not scalar (in which case we say that $\rhobar_{f}$ is nearly-ordinary) or there exists a commutative finite flat $p$-torsion group scheme $G$ over $\zp$ and a character $\bar{\mu}$ such that $\rhobar_{{f}}\tenseur\bar{\mu}^{-1}$ is isomorphic as $\bar{\Fp}[G_{\qp}]$-module to $(G\times_{\zp}\Qbar_{p})[p]$ (in which case we say that $\rhobar_{f}$ is flat).
\item\label{HypMonodromy} There exists $\ell\in\Sigma$ such that $\ell||N(\rhobar_{f})$ and such that the image of $\rhobar_{f}|_{G_{\Q_{\ell}}}$ contains a non-identity unipotent element.
\end{enumerate}
Let $R_{\Sigma}$ be the universal deformation ring parametrizing deformations of $\rhobar_{f}$ which are nearly-ordinary or flat (according to whether $f$ itself is nearly-ordinary or flat) and which are unramified outside $\Sigma$ (under our hypotheses, this is a complete intersection ring coinciding with an appropriate Hecke algebra). Let $T_{\Sigma}$ be the $G_{\Q,\Sigma}$-representation with coefficients in $R_{\Sigma,\Iw}$ deforming $\rhobar_{f}$. Then there exists a module $M_{\Sigma}$ in the completed cohomology of the tower of modular curves as in conjecture \ref{ConjCompleted} such that the fundamental line 
\begin{equation}\nonumber
\Delta_{\Sigma}(T_{\Sigma})=\Det^{-1}_{R_{\Sigma}}\RGamma_{c}(G_{\Q,\Sigma},T_{\Sigma})\tenseur_{R_{\Sigma}}\Det_{R_{\Sigma}}^{-1}M_{\Sigma}
\end{equation}
satisfies the following properties.
\begin{enumerate}
\item There exists a trivialization
\begin{equation}\nonumber
\triv:\Delta_{\Sigma}(T_{\Sigma})\fleche R_{\Sigma}
\end{equation} satisfying the first property of conjecture \ref{ConjMain} and such that the second property is satisfied at a classical point $\psi$ if and only if the ETNC is true for $f_{\psi}$.
\item The trivialization $\triv$ is an isomorphism (equivalently the ETNC with coefficients in $R_{\Sigma}$ is true) if and only if the ETNC for $f_{\psi}$ is true for all classical specializations of $R_{\Sigma}$ if and only if there exists one classical specialization $\psi$ of $R_{\Sigma}$ such that the ETNC is true for $f_{\psi}$.
\item If moreover the semi-simplification of $\rhobar_{{f}}|_{G_{\qp}}$ is reducible but not scalar and if $f$ belongs to $S_{k}(\Gamma_{1}(p^{r}))\cap S_{k}(\Gamma_{0}(N))$, then $\triv$ is an isomorphism; so the ETNC with coefficients in $R_{\Sigma}$ is true and the ETNC is true for all classical specializations of $R_{\Sigma}$.
\end{enumerate}
\end{TheoEnglish}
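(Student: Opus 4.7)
The plan is to combine three ingredients: the Taylor-Wiles-Kisin patching machinery, an interpolation of Kato's Euler system in the family $\Fcali$, and an identification of $M_\Sigma$ with a suitable free quotient of the completed cohomology of the tower $X_1(Np^r)$.

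First, I would observe that hypotheses (1), (2) and (3) are exactly the input needed to run the Taylor-Wiles-Kisin method for the nearly-ordinary (respectively flat) deformation problem unramified outside $\Sigma$: (1) is the bigness condition supplying auxiliary Taylor-Wiles primes, (2) fixes a well-behaved local deformation ring at $p$, and (3) exhibits an $\ell\in\Sigma$ at which the local framed deformation ring has a single geometric component and rigidifies the patching. Beyond the $R=\Hecke$ identification quoted in the statement, patching produces a complete intersection presentation of $R_\Sigma$ together with a projective $R_\Sigma$-module $M_\Sigma$ cut out inside $\Htilde^1_\et(X_\Sigma,\zp)_{\mgot_{\rhobar_f}}$; conjecture \ref{ConjCompleted} then holds by construction, with $\gamma^\pm_s$ an $R_\Sigma$-basis of $M_\Sigma^{\tau=\pm 1}$ (the $\tau$-action lifts to patched cohomology and makes each eigenspace free of the right rank).

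Next, Kato's classes $\z(f_\psi)$ interpolate to a big zeta class $\z_\Sigma\in\Hun_\et(\Z[1/p],T_\Sigma)$ as $\psi$ varies over classical specializations (by the Ochiai-Fukaya-Kato-Sharifi construction in the nearly-ordinary case, and by an analogous construction in the flat case). Coupled to a chosen basis of $M_\Sigma$, $\z_\Sigma$ defines a map
\begin{equation}\nonumber
\triv:\Det^{-1}_{R_\Sigma}\RGamma_c(\Z[1/\Sigma],T_\Sigma)\tenseur_{R_\Sigma}\Det^{-1}_{R_\Sigma}M_\Sigma\fleche R_\Sigma
\end{equation}
by the mechanism used in the proof of proposition \ref{PropL}, and Kato's explicit reciprocity law shows that its specialization at a classical $\psi$ recovers the $p$-part of the fundamental class of $M(f_\psi)$, establishing assertion (1). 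Because $M_\Sigma$ is free over $R_\Sigma$, $\triv$ is, after fixing bases, multiplication by a single element $c\in R_\Sigma$, and the ETNC at a classical $\psi$ is equivalent to $\psi(c)\in\Ocal_\psi^\times$. Since the specialization morphism $R_\Sigma\fleche\Ocal_\psi$ is local, $\psi(c)\in\Ocal_\psi^\times$ is equivalent to $c\notin\mgot_{R_\Sigma}$, a condition independent of the choice of classical $\psi$; this gives assertion (2). For assertion (3), the additional hypotheses place us in the genuinely nearly-ordinary cyclotomic setting in which the Iwasawa Main Conjecture for $\GL_2/\Q$ is known (Kato's divisibility combined with the converse divisibility of Skinner-Urban), yielding the ETNC for at least one classical specialization of $R_\Sigma$, at which point assertion (2) globalizes.

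The main obstacle I expect is the Euler-factor bookkeeping in the diagrams of conjecture \ref{ConjMain}. The trivialization $\triv$ inherits from $M_\Sigma$ the \emph{automorphic} Euler factors attached to the Hecke action on completed cohomology, whereas $\Lcali_{\{p\}}(\Fcali(\aid))$ carries the \emph{Galois} Euler factors $\Lcali_\ell(\Fcali)$; these match at classical specializations by local-global compatibility but need not coincide at non-classical points of $\Spec R_\Sigma$. Matching them uniformly requires a careful comparison between the local Galois deformation rings at primes $\ell\in\Sigma$ and their automorphic counterparts inside $R_\Sigma$. Hypothesis (3) is the key ingredient: at the distinguished prime of unipotent monodromy the two local rings agree by the Taylor-Wiles input at the Steinberg prime, and this matching then spreads across all components of $\Spec R_\Sigma$ through the patched module $M_\Sigma$.
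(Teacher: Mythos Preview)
Your proposal is essentially the same approach as the paper's: the paper refers to \cite{HeckeETNC} and outlines the proof as first establishing the compatibilities of conjecture \ref{ConjMain}, then using the Taylor--Wiles method to reduce to a regular local ring of coefficients, and finally applying the method of Euler systems. You have identified the same three ingredients (Taylor--Wiles--Kisin patching, interpolation of Kato's Euler system, and Euler-factor bookkeeping via local-global compatibility), though you order them slightly differently---constructing $M_\Sigma$ via patching first and treating the compatibilities as a technical obstacle at the end, whereas the paper establishes compatibilities first and uses Taylor--Wiles primarily as a reduction device to pass to the patched (regular) setting where the Euler-system argument runs cleanly. Your locality argument for assertion (2) and your appeal to the known Iwasawa Main Conjecture (Kato plus Skinner--Urban) for assertion (3) are the expected inputs.
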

We refer to \cite{HeckeETNC} for the proof and content ourselves with noting that it proceeds in the following way: first, the compatibilities of conjecture \ref{ConjMain} are proved; then, they are used conjointly with the method of Taylor-Wiles system to reduce to the case of a regular local ring of coefficients and then finally the method of Euler systems is used to establish the result.
\subsection*{Appendix}
\subsubsection*{The determinant functor}
Let $A$ be a ring. A graded invertible $A$-module $(P,r)$ is a pair formed with a projective $A$-module $P$ of rank one and a locally constant map $r$ from $\Spec A$ to $\Z$. A perfect complex of $A$-modules is a complex of $A$-modules quasi-isomorphic to a bounded complex of projective $A$-modules of finite types. An $A$-module is perfect if it admits a resolution by a perfect complex, hence if it has finite projective dimension over $A$. If $A$ is a local noetherian ring, then all bounded complexes of $A$-modules are perfect if (and only if) $A$ is regular by the theorem of Auslander-Buchsbaum and Serre. 

The determinant functor $\Det_{\Lambda}(-)$ of \cite{MumfordKnudsen} (see also \cite{DeligneDeterminant}) is a functor
\begin{equation}\nonumber
\Det_{\Lambda}P=\left(\underset{\Lambda}{\overset{{\rank_{\Lambda}P}}{\bigwedge}}P,\rank_{\Lambda}P\right)
\end{equation} from the category of projective $\Lambda$-modules (with morphisms restricted to isomorphisms) to the symmetric monoidal category of graded invertible $\Lambda$-modules (with morphisms restricted to isomorphisms). The determinant functor admits an extension (which is unique up to canonical isomorphism provided it satisfies the requirements of \cite[Definition I]{MumfordKnudsen}) to a functor from the category of perfect complexes of $\Lambda$-modules with morphisms restricted to quasi-isomorphisms to the category of graded invertible $\Lambda$-modules by setting
\begin{equation}%\label{EqDefDet}
\Det_{\Lambda}C^{\bullet}=\underset{i\in\Z}{\bigotimes}{}\Det_{\Lambda}^{(-1)^{i}}C^{i}.
\end{equation}
In particular, $\Det_{\Lambda}(-)$ satisfies the following properties: it commutes with derived tensor product, there is a canonical isomorphism between $\Det_{\Lambda}(0)$ and $(\Lambda,0)$ and there exists a canonical isomorphism
\begin{equation}\label{EqCanIso}
\iota_{\Lambda}(\alpha,\beta):\Det_{\Lambda}C_{2}^{\bullet}\isocan\Det_{\Lambda}C_{1}^{\bullet}\tenseur_{\Lambda}\Det_{\Lambda}C^{\bullet}_{3}
\end{equation}
compatible with base-change whenever
\begin{equation}\nonumber
\suiteexacte{\alpha}{\beta}{C_{1}^{\bullet}}{C_{2}^{\bullet}}{C_{3}^{\bullet}}
\end{equation}
is a short exact sequence of complexes. If $\Lambda$ is reduced, $\Det_{\Lambda}(-)$ further extends to the derived category of perfect complexes of $\Lambda$-modules with morphisms restricted to quasi-isomorphisms and \eqref{EqCanIso} extends to distinguished triangles.
\subsubsection*{\Nekovar-Selmer Complexes}
Let $\Q\subset K\subset \Qbar$ be a finite extension of $\Q$ with ring of integers $\Ocal_{K}$ and let $S$ be a finite set of finite primes of $\Ocal_{K}$ containing the set $S_{p}$ of primes of $\Ocal_{K}$ above $p$. Let $A$ be a finite $p$-torsion ring or a complete local noetherian ring and let $M$ be an admissible $A[G_{K,S}]$-module in the sense of \cite[Section (3.2)]{SelmerComplexes} (when $A$ is a finite ring, this simply means that $M$ is a finite $p$-torsion module with a continuous action of $G_{K,S}$). For $G$ equal to $G_{K,S}$ or $G_{K_{v}}$ for a finite $v$, denote by $C^{\bullet}(G,M)$ the complex of continuous cochains with values in $M$.

A local condition at $v\in S$ is a pair $(C^{\bullet}_{?}(G_{K_{v}},M),i_{v})$ where $C^{\bullet}_{?}(G_{K_{v}},M)$ is a bounded complex and $i_{v}:C^{\bullet}_{?}(G_{K_{v}},M)\fleche C^{\bullet}(G_{K_{v}},M)$ is a morphism of complexes. Denote by $$i:C^{\bullet}(G_{K,S},M)\fleche \sommedirecte{v\in S}{}C^{\bullet}(G_{K_{v}},M)$$ the direct sum of the localization maps at $S$ and by $\iota$ the map
\begin{equation}\nonumber
i-\sommedirecte{v\in S}{}i_{v}:C^{\bullet}(G_{K,S},M)\oplus\sommedirecte{v\in S}{}C^{\bullet}_{?}(G_{K_{v}},M)\overset{}{\fleche}\sommedirecte{v\in S}{}C^{\bullet}(G_{K_{v}},M).
\end{equation}
The \Nekovar-Selmer complex $\RGamma_{?}(G_{K,S},M)$ of $M$ (see \cite[Chapter 6]{SelmerComplexes}) attached to the local conditions $(C^{\bullet}_{?}(G_{K_{v}},M),i_{v})$ for $v\in S$ is the complex
\begin{equation}\label{EqDefSelmer}
\Cone\left(C^{\bullet}(G_{K,S},M)\oplus\sommedirecte{v\in S}{}C^{\bullet}_{?}(G_{K_{v}},M)\overset{\iota}{\fleche}\sommedirecte{v\in S}{}C^{\bullet}(G_{K_{v}},M)\right)[-1]
\end{equation}
seen in the derived category. In a slight abuse of notations, we henceforth do not distinguish complexes and their images in the derived category so that we write $\RGamma_{?}(G_{K_{v}},M)$ for $C^{\bullet}_{?}(G_{K_{v}},M)$ and likewise in all similar situations. We also systematically assume that $(C^{\bullet}_{?}(G_{K_{v}},M),i_{v})$ is equal to $(C^{\bullet}(G_{K_{v}},M),\Id_{v})$ for all $v\in S_{p}$.

When $\RGamma_{?}(G_{K_{v}},T)$ is the zero complex for all $v\in S-S_{p}$, the attached \Nekovar-Selmer complex is  the complex of cohomology with compact support outside $p$
\begin{equation}\nonumber
\RGamma_{c}(G_{K,S},M)=\Cone\left(\RGamma(G_{K,S},M){\fleche}\sommedirecte{v\in S\backslash S_{p}}{}\RGamma(G_{K_{v}},M)\right)[-1].
\end{equation}
It is also equal to the complex $\RGamma_{c}(\Ocal_{K}[1/S],M)$ of étale cohomology with compact support outside $p$ (we emphasize that contrary to common practices, we impose no condition at primes above $p$). In the opposite direction, when $\RGamma_{?}(G_{K_{v}},M)$ is equal to $\RGamma(G_{K_{v}},T)$ and $i_{v}$ is the identity for all $v\in S$, the \Nekovar-Selmer complex is the complex $\RGamma(G_{K,S},M)$ of continuous cochains with values in $M$. 

When $M$ is nearly-ordinary in the sense that there exists a short exact sequence of non-zero $A[G_{K_{v}}]$-modules
\begin{equation}\nonumber
0\fleche M_{v}^{+}\fleche M\fleche M_{v}^{-}\fleche0
\end{equation}
for all $v|p$, we denote by $\RGamma_{f}(G_{K,S},M)$ the complex attached to the unramified condition $\RGamma(G_{K_{v}},M^{I_{v}})$ with its natural map to $\RGamma(G_{K_{v}},M)$ at $v\nmid p$ and to the nearly-ordinary condition $\RGamma(G_{K_{v}},M_{v}^{+})$ (with its natural inclusion in $\RGamma(G_{K_{v}},M)$) at $v|p$. Explicitly, this is the complex
\begin{equation}\nonumber
\Cone\left(\RGamma(G_{K,S},M)\oplus\sommedirecte{v\in S\backslash S_{p}}{}\RGamma(G_{K_{v}}/I_{v},M^{I_{v}})\oplus\sommedirecte{v|p}{}\RGamma(G_{K_{v}},M_{v}^{+})\fleche\sommedirecte{v\in S\backslash S_{p}}{}\RGamma(G_{K_{v}},M)\right)[-1].
\end{equation}
The cohomology of $\RGamma_{f}(G_{K,S},M)$ is denoted $H^{i}_{f}(G_{K,S},M)$.
\def\Dbar{\leavevmode\lower.6ex\hbox to 0pt{\hskip-.23ex \accent"16\hss}D}
  \def\cfac#1{\ifmmode\setbox7\hbox{$\accent"5E#1$}\else
  \setbox7\hbox{\accent"5E#1}\penalty 10000\relax\fi\raise 1\ht7
  \hbox{\lower1.15ex\hbox to 1\wd7{\hss\accent"13\hss}}\penalty 10000
  \hskip-1\wd7\penalty 10000\box7}
  \def\cftil#1{\ifmmode\setbox7\hbox{$\accent"5E#1$}\else
  \setbox7\hbox{\accent"5E#1}\penalty 10000\relax\fi\raise 1\ht7
  \hbox{\lower1.15ex\hbox to 1\wd7{\hss\accent"7E\hss}}\penalty 10000
  \hskip-1\wd7\penalty 10000\box7} \def\Dbar{\leavevmode\lower.6ex\hbox to
  0pt{\hskip-.23ex \accent"16\hss}D}
  \def\cfac#1{\ifmmode\setbox7\hbox{$\accent"5E#1$}\else
  \setbox7\hbox{\accent"5E#1}\penalty 10000\relax\fi\raise 1\ht7
  \hbox{\lower1.15ex\hbox to 1\wd7{\hss\accent"13\hss}}\penalty 10000
  \hskip-1\wd7\penalty 10000\box7}
  \def\cftil#1{\ifmmode\setbox7\hbox{$\accent"5E#1$}\else
  \setbox7\hbox{\accent"5E#1}\penalty 10000\relax\fi\raise 1\ht7
  \hbox{\lower1.15ex\hbox to 1\wd7{\hss\accent"7E\hss}}\penalty 10000
  \hskip-1\wd7\penalty 10000\box7} \def\Dbar{\leavevmode\lower.6ex\hbox to
  0pt{\hskip-.23ex \accent"16\hss}D}
  \def\cfac#1{\ifmmode\setbox7\hbox{$\accent"5E#1$}\else
  \setbox7\hbox{\accent"5E#1}\penalty 10000\relax\fi\raise 1\ht7
  \hbox{\lower1.15ex\hbox to 1\wd7{\hss\accent"13\hss}}\penalty 10000
  \hskip-1\wd7\penalty 10000\box7}
  \def\cftil#1{\ifmmode\setbox7\hbox{$\accent"5E#1$}\else
  \setbox7\hbox{\accent"5E#1}\penalty 10000\relax\fi\raise 1\ht7
  \hbox{\lower1.15ex\hbox to 1\wd7{\hss\accent"7E\hss}}\penalty 10000
  \hskip-1\wd7\penalty 10000\box7}
\providecommand{\bysame}{\leavevmode\hbox to3em{\hrulefill}\thinspace}
\providecommand{\MR}{\relax\ifhmode\unskip\space\fi MR }
% \MRhref is called by the amsart/book/proc definition of \MR.
\providecommand{\MRhref}[2]{%
  \href{http://www.ams.org/mathscinet-getitem?mr=#1}{#2}
}
\providecommand{\href}[2]{#2}

\end{document}